\definecolor{darkblue}{rgb}{0.2,0.2,0.6} %
\definecolor{mediumblue}{rgb}{0.3,0.5,0.8} %
\definecolor{lightblue}{rgb}{0.6,0.8,1} %
	\definecolor{Azure4}{rgb}{0.33, 0.33, 0.33}
    \definecolor{Azure2}{rgb}{0.57, 0.64, 0.69}
\declaretheorem[name=Theorem, parent=section]{theorem}
\declaretheorem[name=Corollary, sibling=theorem]{corollary}
\declaretheorem[name=Proposition, sibling=theorem]{proposition}
\declaretheorem[name=Lemma, sibling=theorem]{lemma}
\theoremstyle{definition}
\declaretheorem[name=Remark, sibling=theorem]{remark}
\theoremstyle{remark}
\newcommand{\RR}{\mathbb{R}}
\newcommand{\QQ}{\mathbb{Q}}
\newcommand{\NN}{\mathbb{N}}
\newcommand{\ZZ}{\mathbb{Z}}
\newcommand{\asympleq}{\lesssim_{\mathcal{G}}}
\newcommand{\homomoleq }{\leq_{\mathcal{G}}}
\let\eps\varepsilon
\DeclareMathAccent{\wtilde}{\mathord}{largesymbols}{"65}
\definecolor{deepcarrotorange}{rgb}
{0.605, 0.26, 0.12}
\definecolor{lightcarrotorange}{rgb}{0.91, 0.51, 0.27}
\title{Universality of asymptotic graph homomorphism}
\author[1]{Anna Luchnikov}
\author[2]{Jim Wittebol}
\author[1]{Jeroen Zuiddam}
\date{\today}
\affil[1]{University of Amsterdam, Amsterdam, Netherlands}
\affil[2]{Vrije Universiteit Brussel, Brussels, Belgium}
\begin{document}

\maketitle

\small
\centerline{\textbf{Abstract}}
\vspace{0.5em}
The Shannon capacity of graphs, introduced by Shannon in 1956 to model zero-error communication, asks for determining the rate of growth of independent sets in strong powers of graphs. Despite much interest, which has led to a large collection of upper bound methods (e.g., Lovász theta function, complement of the projective rank, fractional Haemers bound) and lower bound constructions, much is still unknown about this parameter, for instance whether it is computable. Indeed, results of Alon and Lubetzky (2006) have ruled out several natural routes to such an algorithm.

Recent work has established a dual characterization of the Shannon capacity in terms of the asymptotic spectrum of graphs (a class of well-behaved graph parameters that includes the aforementioned upper bound methods). A core step in this duality theory is to shift focus from Shannon capacity itself to studying the asymptotic relations between graphs, that is, the asymptotic cohomomorphisms: given graphs $G$ and $H$, is there a cohomomorphism (a map on vertex sets, mapping non-edges to non-edges) from the $n$th power of~$G$ to the $(n + o(n))$th strong power of $H$? Indeed, Shannon capacity essentially reduces to the case that $G$ has no edges (i.e.,~is an independent set).

Towards understanding the structure of Shannon capacity, we study the ``combinatorial complexity'' of asymptotic cohomomorphism. As our main result, we prove that the asymptotic cohomomorphism order is \emph{universal} for all countable preorders. That is, we prove that any countable preorder can be order-embedded into the asymptotic cohomomorphism order (i.e.~appears as a suborder). Previously this was known for (non-asymptotic) cohomomorphism (going back to work of Hedrlín (1969) and Pultr--Trnková (1980)), which left open the possibility that asymptotic cohomomorphism has a simpler structure.

The main strategy of our proof is to construct an order-embedding from an order on certain sets of finite binary strings, which is known to be universal by a result of Hubička and Nešetřil (2005), to the asymptotic cohomomorphism order. The construction of this embedding relies on results of Vrana (2021) on the convex structure of the asymptotic spectrum of graphs, and a new result determining the value of the complement of the projective rank on a class of circulant graphs (fraction graphs).  
These ingredients allow us to simulate a certain set of lines with pointwise order as graphs under asymptotic cohomomorphism, which we use to prove our result.
\newpage\normalsize

\section{Introduction}

Motivated by the Shannon capacity problem and recent approaches to it, we study in this paper a preorder on graphs called asymptotic cohomomorphism, which essentially measures if large powers of a graph allow a cohomomorphism into slightly larger powers of another graph. The homomorphism order (and thus also the cohomomorphism order) has been long known to be  very rich in structure. Indeed it goes back to work of Hedrlín \cite{Hedrln1969OnUP,Pultr1980CombinatorialAA} (see also Hell--Nešetřil \cite{Hell2004GraphsAH}) that any countable preorder can be order-embedded into it---a property called \emph{universality}---which was followed by results proving even more structure later by Fiala, Hubička, Long and Nešetřil \cite{FractalProp}.  Our main result is that asymptotic cohomomorphism, while a much ``stronger'' preorder than cohomomorphism (i.e.~with more relations, which could a priori greatly simplify its structure), is still universal. 

Posed by Shannon in 1956 \cite{MR0089131}, the Shannon capacity problem asks to determine the rate of growth of the independence number of a graph under taking strong graph product powers. This problem is notoriously difficult and has defied many attempts at solving it, even for specific small graphs, and despite much effort. Indeed, a long line of work has led to many methods and results, in the direction of upper bounds (Lovász theta function \cite{Lovasz79}, fractional Haemers bound \cite{haemers1979some,bukh2018fractionalversionhaemersbound}, complement of the projective rank \cite{Man_inska_2016}), lower bounds (e.g., Baumert--McEliece--Rodemich--Rumsey--Stanley--Taylor~\cite{MR0337668}, Polak--Schrijver \cite{MR3906144}, de Boer--Buys--Zuiddam \cite{deboer2024asymptoticspectrumdistancegraph}), and structural results (e.g., Alon \cite{MR2234473}, Alon--Lubetzky \cite{MR2234473}, Zuiddam \cite{SpectrumGraphs}, Vrana \cite{vrana2019probabilisticrefinementasymptoticspectrum}, Schrijver \cite{MR4525548},  Wigderson--Zuiddam \cite{wigderson2022asymptotic}). 

Many questions about Shannon capacity are open. One central open problem is whether Shannon capacity is computable. Alon--Lubetzky \cite{MR2234473} ruled out several natural approaches to constructing such an algorithm by showing that the ``jump'' behaviour of the independence number under powers can be very intricate. On the other hand, there are many examples, in several settings in mathematics and computer science, where parameters that are asymptotic or amortized allow a surprisingly simple description. The work in this paper is aimed at a better understanding of the ``complexity'' of Shannon capacity from a combinatorial point of view. 

For this we focus on asymptotic cohomomorphism, an order on graphs that is tightly related to Shannon capacity (essentially replacing the question of embedding large independent sets in powers of a graphs, by embedding large powers of a graph into large powers of another graph) and that was recently introduced in the development of asymptotic spectrum duality \cite{SpectrumGraphs} (see also~\cite{wigderson2022asymptotic}), and  plays a central role there. Asymptotic spectrum duality gives a dual characterization of Shannon capacity as a minimization problem over a class of functions called the asymptotic spectrum of graphs (which includes aforementioned Lovász theta function, fractional Haemers bound and complement of the projective rank, and also fractional clique covering number), but moreover also characterizes asymptotic cohomomorphism. 

We summarize here our main results, which we expand on in the rest of the text:
\begin{itemize}
    \item Motivated by the study of Shannon capacity, we prove that asymptotic graph cohomomorphism is universal for all countable preorders. In other words, every countable preorder appears as a suborder in it. In fact, our construction also implies universality of the (non-asymptotic) cohomomorphism order, thus giving a different proof for that.
    \item Our proof of universality relies on embedding a preorder on sets of binary strings (that was proven to be universal by Hubička and Nešetřil \cite{HubickaNestrilUniversal}) into graphs. It further relies on the asymptotic spectrum of graphs \cite{SpectrumGraphs}, its (convex) structure \cite{vrana2019probabilisticrefinementasymptoticspectrum}, and a construction of lines and points with special order properties.
    \item As an important ingredient for our proof, we prove that the complement of the projective rank coincides with the fractional clique covering number on a certain set of circulant graphs (fraction graphs), which allows us to ``simulate'' rational numbers. (See  \autoref{sec:fhb}.)
    
\end{itemize}

\section{Graph cohomomorphism and universality}

Let $\mathcal{G}$ be the class of all finite simple graphs. (In this paper we will only consider such graphs.) For any graph $G$, we denote by $V(G)$ its vertex set and by $E(G)$ its edge set. For any subset $S \subseteq V(G)$, $G[S]$ refers to the subgraph of $G$ induced by $S$. We denote by $G\boxtimes H$ the strong product of graphs, by $G \sqcup H$ the disjoint union of graphs, by $\overline{G}$ the complement graph, by $G + H = \overline{\overline{G} \sqcup \overline{H}}$ the join of graphs, by $K_n$ the complete graph on $n$ vertices, and by $E_n = \overline{K}_n$ its complement.

For any graphs $G,H$, a cohomomorphism $\phi : G \to H$ is a map $\phi: V(G) \to V(H)$ that maps distinct non-adjacent vertices to distinct non-adjacent vertices. In other words, $\phi : G \to H$ is a cohomomorphism if and only if $\phi : \overline{G} \to \overline{H}$ is a graph homomorphism. We write $G \leq_{\mathcal{G}} H$ if there is a graph cohomomorphism $\phi : G \to H$ and we call this relation $\leq_{\mathcal{G}}$ on $\mathcal{G}$ \emph{cohomomorphism preorder}. The cohomomorphism preorder is  well-studied (but usually in the complementary language of homomorphism), and much is known about its rich structure. One such structural property is universality.

Given two preorders $(A, \leq_A)$ and $(B, \leq_B)$ we call a map $f : A \to B$ an \emph{order-embedding} if for every $a_1, a_2 \in A$ we have $a_1 \leq_A a_2$ if and only if $f(a_1) \leq_B f(a_2)$. We call $(B, \leq_B)$ \emph{countably universal} if for every countable preorder $(A, \leq_A)$ there exists an order-embedding $f : A \to B$. In a preorder $(A, \leq_A)$, a subset $S \subseteq A$ is called an \emph{antichain} if for every distinct $a,b \in A$, $a \nleq_A b$ and $b \nleq_A a$.

The (abstract) existence of countably preorders has been known since the work of Fraissé \cite{FraisseUniversality}. It was established by Hedrlín, Pultr and Trnková \cite{Hedrln1969OnUP, Pultr1980CombinatorialAA} that the cohomomorphism preorder on graphs is countably universal. Different proofs were later given by Hubička and Nešetřil \cite{FInitePathsAreUniversal, HUBICKA2005765}, and the universality was strengthened to a ``fractal'' property by Fiala, Hubička, Long and Nešetřil \cite{FractalProp}.

\section{Result: universality of asymptotic cohomomorphism}

For any graph $G$, the Shannon capacity of $G$ is defined as $$\Theta(G) = \sup_{n} \alpha(G^{\boxtimes n})^{1/n},$$ where~$\alpha$ denotes the independence number, and where the supremum may be equivalently replaced by a limit (by Fekete's lemma). 
Shannon capacity is closely related to a preorder on graphs called the \emph{asymptotic cohomomorphism preorder} introduced in \cite{SpectrumGraphs} (see also \cite{Fritz2017}). For graphs $G$, $H$, we write $H \asympleq G$ if and only if there exists a function $f: \mathbb{N} \to \mathbb{N}$ satisfying $\lim_{n \to \infty} f(n)/n = 0$, such that for all $n$, we have $$
    H^{\boxtimes n} \homomoleq G^{\boxtimes(n + f(n))}.$$
(We will denote by $o(n)$ any function $f: \mathbb{N} \to \mathbb{N}$ for which $\lim_{n \to \infty} f(n)/n = 0$, so that the above may be written as $H^{\boxtimes n} \homomoleq G^{\boxtimes (n + o(n))}$.) Cohomomorphism implies asymptotic cohomomorphism, but not necessarily the other way around. For instance $E_{11} \nleq_{\mathcal{G}}C_5^{\boxtimes 3} $ but $E_{11} \asympleq C_5^{\boxtimes 3}$. (Indeed, it is known that $\alpha(C_5^{\boxtimes 3}) = 10 $ \cite{BadMna12}, whence $E_{11} \nleq_{\mathcal{G}} C_5^{\boxtimes 3}. $ On the other hand, one can check with the definition of the Shannon capacity that $\Theta(C_5^{\boxtimes 3})= \Theta(C_5)^3= 5^{3/2}>11 $ \cite{MR0089131}, implying  $\alpha(C_5^{\boxtimes 3n }) \geq 11^{n -o(n)}$ and so $\alpha(C_5^{\boxtimes (3n +o(n))}) \geq 11^{n }$.  Therefore, $C_5^{\boxtimes(3 n + o(n))}\geq E_{11^n} = E_{11}^{\boxtimes n} $. Hence  $E_{11}\asympleq C_5^{\boxtimes 3}$.)

In the context of the study of  asymptotic cohomorphism, natural questions arise: How complex is the asymptotic cohomomorphism preorder? Could it have a far simpler structure than the cohomomorphism preorder? Can it be determined by looking at only a few properties of the graphs $G$ and~$H$? Our main result asserts that, in a precise sense, the asymptotic cohomomorphism preorder is as complex as the cohomomorphism order.

Let $(\mathcal{G}, \asympleq)$ denote the set of (finite simple) graphs equipped with the asymptotic cohomomorphism preorder. 

\begin{theorem}\label{th:main}
    $(\mathcal{G}, \asympleq)$ is countably universal.
\end{theorem}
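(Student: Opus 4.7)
The strategy is to produce an order-embedding into $(\mathcal{G}, \asympleq)$ from a source preorder already known to be countably universal. As hinted in the introduction, I would take as source the Hubička--Nešetřil preorder on sets of finite binary strings \cite{HubickaNestrilUniversal}. Rather than embedding this directly into graphs, I would factor through an intermediate ``geometric'' preorder whose elements are finite sets of affine functions (``lines'') in some $\mathbb{R}^d$, with $A \leq B$ iff every $\ell \in A$ is pointwise dominated by some $\ell' \in B$. A binary string $s$ is encoded as a line $\ell_s$ whose rational coefficients are chosen so that distinct strings yield lines in sufficiently general position, and a set of strings maps to its set of lines. That the string preorder embeds into this line preorder is a combinatorial check. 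The advantage of lines is that pointwise domination of finitely many affine functions is a finite-dimensional, convexity-friendly condition, amenable to the duality machinery coming next.

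The bulk of the work is then to embed the line preorder into $(\mathcal{G}, \asympleq)$. Here I would appeal to asymptotic spectrum duality \cite{SpectrumGraphs}: $H \asympleq G$ iff $f(H) \leq f(G)$ for every $f$ in the asymptotic spectrum $\mathcal{X}(\mathcal{G})$, replacing the graph problem with a function-theoretic one about the profiles $f \mapsto \log f(F)$ on $\mathcal{X}(\mathcal{G})$. The building blocks will be the fraction graphs $F_{p/q}$: the computation in Section~\ref{sec:fhb}, showing that the complement of the projective rank agrees with the fractional clique covering number on these graphs, together with the known values of Lovász theta and the clique cover number, pins down two independent coordinates of the profile of $F_{p/q}$ and lets me prescribe any rational slope. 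Strong products add profiles, joins with complete graphs shift them, and Vrana's convex-structure result \cite{vrana2019probabilisticrefinementasymptoticspectrum} controls the remaining coordinates; in this way each rational line $\ell$ is realized, up to a fixed rescaling, as the spectral profile of a graph $G_\ell$.

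For a finite set of lines $A = \{\ell_1, \ldots, \ell_k\}$ I take the disjoint union $G_A = G_{\ell_1} \sqcup \cdots \sqcup G_{\ell_k}$; because every spectral point is additive on disjoint unions, $f(G_A)$ is a sum of exponentials of the $\ell_i$ evaluated at $f$. The easy direction of the order-embedding is then immediate: line-by-line pointwise domination implies $G_A \asympleq G_B$. The main obstacle, and the heart of the proof, is the converse, where one must deduce a \emph{line-by-line} pointwise domination from a single inequality $f(G_A) \leq f(G_B)$ for all $f \in \mathcal{X}(\mathcal{G})$. My plan here is a genericity argument exploiting the general position of the lines: on each region of $\mathcal{X}(\mathcal{G})$ the $A$-sum is dominated by one specific $\ell_i$, and by varying the spectral point (using the density afforded by \cite{vrana2019probabilisticrefinementasymptoticspectrum}) one extracts a single consistent dominating line from $B$ on all of $\mathcal{X}(\mathcal{G})$. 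Composing with the strings-to-lines embedding delivers the desired order-embedding of the Hubička--Nešetřil preorder into $(\mathcal{G}, \asympleq)$ and so countable universality.
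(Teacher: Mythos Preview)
Your high-level strategy coincides with the paper's: start from the Hubička--Nešetřil preorder on finite antichains of binary strings, encode strings as rational affine lines, realise lines as graphs built from fraction graphs and a fixed vertex-transitive graph $G$, and use asymptotic spectrum duality together with the equality $\overline{\xi_f}(E_{p/q}) = \overline{\chi_f}(E_{p/q}) = p/q$ plus Vrana's interpolation to manufacture the needed spectral points. So the architecture is right.

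The genuine gap is your choice of the combining operation. You take $G_A = G_{\ell_1} \sqcup \cdots \sqcup G_{\ell_k}$ and use that spectral points are \emph{additive} on disjoint unions. This breaks already your ``easy'' direction. In the Hubička--Nešetřil order, $A \leq_{\mathcal{W}} B$ allows several strings $a \in A$ to be dominated by the \emph{same} $b \in B$; for instance $A = \{a_1, a_2\}$, $B = \{b\}$ with $a_1, a_2 \leq_W b$. Then for any spectral point $f$ you need $f(G_{a_1}) + f(G_{a_2}) \leq f(G_b)$, but line-by-line domination only gives $f(G_{a_i}) \leq f(G_b)$, which is not enough. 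So the implication ``$A \leq_{\mathcal{W}} B \Rightarrow G_A \asympleq G_B$'' fails, and your map is not order-preserving. The paper avoids this by using the \emph{join} $G_A = \sum_{a \in A} G_a$, for which every spectral point satisfies $f(G_A) = \max_{a \in A} f(G_a)$ (Lemma~\ref{SumOfGraphs}); with max instead of sum, the easy direction is truly immediate, and the hard direction becomes a one-line computation once one has, for each string $v$, a single ``witness'' spectral point $\phi_v$ at which $\ell_v$ strictly beats every line $\ell_w$ with $v \nleq_W w$.

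Two smaller points. First, the paper uses one-variable lines $\ell_w(x) = a_w x + b_w$ with $a_w, b_w \in \mathbb{Q}_{>2}$, realised as $G_w = E_{a_w} \boxtimes G \sqcup E_{b_w}$; the ``shift'' comes from disjoint union with a fraction graph, not from a join with a complete graph as you write. Second, your plan for the hard direction (``by varying the spectral point \ldots\ one extracts a single consistent dominating line'') is not yet an argument: with sums you would need a log-sum-exp separation statement uniform over a family of spectral points, which you have not supplied. Switching to join and carrying along explicit witness values $r_v \in (s,t)$ (built inductively so that $\ell_v(r_v) > \ell_w(r_v)$ for all $w$ with $v \nleq_W w$) eliminates this difficulty entirely.
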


The high-level steps of the proof of \autoref{th:main} are as follows:

\begin{enumerate}[(1)]

\item The starting point is a preorder $(W, \leq_W)$ on binary strings and a preorder $(\mathcal{W},\leq_{\mathcal{W}})$ on antichains of elements in $W$. The latter is known to be countably universal~\cite{HubickaNestrilUniversal}. (See \autoref{sec:ups} for the definition of these preorders.)

\item We embed $W$ into the set of graphs~$\mathcal{G}$ through a construction of lines and points, assigning to every $a \in W$ a graph~$G_a$. 
To ensure that this embedding is order-preserving, we  use tools from asymptotic spectrum duality (in particular convexity) and special knowledge of some elements in the asymptotic spectrum (fractional clique covering number and complement of the  projective rank).

\item We then embed $\mathcal{W}$ into the set of graphs~$\mathcal{G}$, assigning to every $A \in \mathcal{W}$ the graph $\sum_{a \in A} G_a$, where sum means graph join. The way we set up the embedding in (2) will then allow us to prove that this embedding is order-preserving.
\end{enumerate}

The rest of the paper is organized as follows.
In~\autoref{sec:asd} and~\autoref{sec:asd-vtg} we give the necessary ingredients from asymptotic spectrum duality  theory. A new result on the complement of the projective rank, which is also of independent interest, is presented in~\autoref{sec:fhb}. The latter implies the existence of a special family of spectral points that behave nicely with respect to certain graphs. We derive the existence of such a family in~\autoref{sec:fam-spec}. After that, as a warm-up, we illustrate how to use our techniques to order-embed some special preorders into $(\mathcal{G}, \asympleq)$, namely any countable antichain and any finite preorder (\autoref{sec:sbr}). The main result (\autoref{th:main}) is proved in~\autoref{sec:ups}. Finally, we discuss some open problems in~\autoref{sec:dis}.

\section{Asymptotic spectrum of graphs}\label{sec:asd}

An essential tool to study the asymptotic cohomomorphism preorder $(\mathcal{G}, \asympleq)$, is the asymptotic spectrum of graphs, introduced in \cite{SpectrumGraphs}. The theory of asymptotic spectra was originally developed by Strassen  \cite{strassen1987relative, strassen1988asymptotic, strassen1991degeneration} to study tensors and tensor rank, and more generally the asymptotic ``rank'' of elements in the  setting of partially ordered well-behaved semirings.
The asymptotic spectrum of graphs, denoted by $\Delta(\mathcal{G})$, is defined as the set of functions $f: \mathcal{G} \to \RR_{\geq 0}$ that satisfy the following four properties, for every $n \in \NN$ and every two graphs $G,H$:
\begin{enumerate}[(i)]
     \item %
     $f(E_n) = n$.
    
    \item  %
    $f(G \boxtimes H) = f(G)f(H).$
    
    \item  %
    $f(G \sqcup H) = f(G) + f(H).$

    \item  %
    If $G \homomoleq H$, then $f(G) \leq f(H).$
 \end{enumerate}
The functions in $\Delta(G)$ are called \emph{spectral points}. They characterize the preorder $\asympleq$ completely:
 \begin{theorem}[\cite{SpectrumGraphs}] Let $G,H$ be graphs. Then
 $G \asympleq H $ if and only if for every $f \in \Delta(\mathcal{G})$, $f(G) \leq f(H).$
     
 \end{theorem}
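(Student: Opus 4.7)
The plan is to handle the two directions separately. The forward direction is straightforward from the axioms defining $\Delta(\mathcal{G})$, while the reverse direction is the heart of the theorem and follows by applying Strassen's general asymptotic spectrum theorem for preordered commutative semirings.

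For the forward direction, suppose $G \asympleq H$, so there is a function $g: \NN \to \NN$ with $g(n) = o(n)$ and $G^{\boxtimes n} \leq_{\mathcal{G}} H^{\boxtimes(n+g(n))}$ for all $n$. Fix $f \in \Delta(\mathcal{G})$. Multiplicativity (ii) and monotonicity (iv) yield $f(G)^n = f(G^{\boxtimes n}) \leq f(H^{\boxtimes(n+g(n))}) = f(H)^{n+g(n)}$. Whenever $H$ is nonempty we have $E_1 \leq_{\mathcal{G}} H$, so $f(H) \geq f(E_1) = 1$ by (i) and (iv). Taking $n$th roots gives $f(G) \leq f(H)^{1+g(n)/n}$, and letting $n \to \infty$ gives $f(G) \leq f(H)$. (The edge case $H = E_0$ forces $G = E_0$ as well, trivially settling this direction.)

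For the reverse direction, I would apply Strassen's asymptotic spectrum theorem. The relevant algebraic setup is that isomorphism classes of graphs form a commutative semiring under $(\sqcup, \boxtimes)$ with additive identity $E_0$ and multiplicative identity $E_1$, and the preorder $\leq_{\mathcal{G}}$ is compatible with both operations. The family $\{E_n\}_{n \in \NN}$ plays the role of a \emph{scale} in Strassen's framework: for every nonempty graph $G$ one has $E_1 \leq_{\mathcal{G}} G \leq_{\mathcal{G}} E_{|V(G)|}$, which is exactly the squeeze needed to define meaningful asymptotic rates. Strassen's theorem then asserts that the asymptotic preorder induced by $\leq_{\mathcal{G}}$ together with this scale coincides with the intersection $\bigcap_f \{(G,H) : f(G) \leq f(H)\}$, ranging over all monotone semiring homomorphisms $f : \mathcal{G} \to \RR_{\geq 0}$ normalized by $f(E_n) = n$. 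Conditions (i)--(iv) are precisely the axioms of such homomorphisms, so $\Delta(\mathcal{G})$ is exactly this set, yielding the theorem. This is the route taken in \cite{SpectrumGraphs}.

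The main obstacle lies in the non-constructive core of Strassen's theorem itself: its proof proceeds via a Hahn--Banach / Positivstellensatz separation argument, which, given any $G \not\asympleq H$, produces a spectral point $f$ with $f(G) > f(H)$ but provides no direct formula for it. Filling in a self-contained proof would therefore require (a) verifying carefully that the graph semiring satisfies Strassen's hypotheses, including a cancellation-like property for the scale $\{E_n\}$, and (b) checking that the abstract asymptotic preorder in Strassen's framework (defined via rates relative to the scale) agrees with the concrete definition given here by $G^{\boxtimes n} \leq_{\mathcal{G}} H^{\boxtimes(n+o(n))}$.
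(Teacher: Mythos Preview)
The paper does not give its own proof of this statement; it is quoted as a known result from \cite{SpectrumGraphs} and used as a black box throughout. Your proposal is correct and matches the route taken in that reference: the forward direction is the easy consequence of axioms (i)--(iv), and the reverse direction is obtained by verifying that $(\mathcal{G}, \sqcup, \boxtimes, \leq_{\mathcal{G}})$ is a Strassen preordered semiring and invoking Strassen's spectral duality theorem.
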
 \label{DualityThm}
In addition to the four defining properties of the functions belonging to the asymptotic spectrum, spectral points behave well under the join operation $+$.
 \begin{lemma}[\cite{vrana2019probabilisticrefinementasymptoticspectrum}] \label{SumOfGraphs}
 Let $G,H$ be graphs. Then
$f(G + H) = \max(f(G),f(H))$ for every $f\in \Delta(\mathcal{G})$.
\end{lemma}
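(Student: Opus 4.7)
The lower bound $f(G+H) \geq \max(f(G), f(H))$ is the easy direction: the inclusion $V(G) \hookrightarrow V(G+H)$ is a cohomomorphism $G \leq_{\mathcal{G}} G+H$, because non-adjacencies inside $V(G)$ are preserved (the join only adds edges between $V(G)$ and $V(H)$). The same holds for $H$, so by monotonicity (property (iv)), $f(G), f(H) \leq f(G+H)$.

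For the harder upper bound $f(G+H) \leq \max(f(G), f(H))$, the cleanest subcase is $G = H$: the identity map on $V(G) \times \{1,2\}$ is a cohomomorphism $G + G \leq_{\mathcal{G}} G \boxtimes K_2$, because $G+G$ has strictly more edges than $G \boxtimes K_2$ (every cross pair is adjacent in $G+G$, while only twin pairs and $G$-adjacent cross pairs are adjacent in $G \boxtimes K_2$). Combining this with property (ii) and $f(K_2) = 1$ (which follows from $E_1 \leq_{\mathcal{G}} K_2 \leq_{\mathcal{G}} K_1$ together with property (i)), we get $f(G+G) \leq f(G) \cdot f(K_2) = f(G)$. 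The same construction works whenever $H \leq_{\mathcal{G}} G$ at the non-asymptotic cohomomorphism level: the map $u \in V(G) \mapsto (u,1)$, $h \in V(H) \mapsto (\phi(h),2)$ for any cohomomorphism $\phi: H \to G$ witnesses $G + H \leq_{\mathcal{G}} G \boxtimes K_2$, giving $f(G+H) \leq f(G)$.

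The main obstacle is the general case, where we only know $f(H) \leq f(G)$ for the particular spectral point $f$, which is strictly weaker than $H \leq_{\mathcal{G}} G$. My plan is to argue asymptotically: analyze $(G+H)^{\boxtimes n}$ via the type decomposition $V((G+H)^{\boxtimes n}) = \bigsqcup_{S \subseteq [n]} V_S$, where each $V_S$ induces $G^{\boxtimes |S|} \boxtimes H^{\boxtimes (n-|S|)}$ with cross-pattern edges only partially present (determined by the join). The goal is to upgrade the warm-up cohomomorphism to a statement $(G+H)^{\boxtimes n} \leq_{\mathcal{G}} G^{\boxtimes n} \boxtimes K_{m(n)}$ (or an analogous bound with $G$ replaced by $H$) with $m(n)$ subexponential in $n$; together with properties (ii), (iv) and $f(K_m) = 1$, this would give $f(G+H)^n \leq \max(f(G), f(H))^n \cdot m(n)$, and taking $n$-th roots as $n \to \infty$ yields the desired bound.

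The hard part will be bridging the gap between the spectral inequality $f(H) \leq f(G)$ and the required cohomomorphism-level compression. The naive approach of using the identity map $(G+H)^{\boxtimes n} \leq_{\mathcal{G}} \bigsqcup_S G^{\boxtimes |S|} \boxtimes H^{\boxtimes (n-|S|)}$ (valid since the disjoint union has only more non-edges) only gives the bound $(f(G)+f(H))^n$ through additivity (iii), which is too weak. Closing this gap requires exploiting the partial cross-pattern edges in $(G+H)^{\boxtimes n}$ in a combinatorially nontrivial way, and will likely rely on the finer convex/probabilistic characterization of $\Delta(\mathcal{G})$ developed by Vrana, in which each spectral point can be represented by a distribution that concentrates on the heavier summand and thereby converts the spectral inequality into an approximate cohomomorphism.
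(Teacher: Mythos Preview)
The paper does not supply its own proof of this lemma; it is quoted directly from \cite{vrana2019probabilisticrefinementasymptoticspectrum}, so there is no in-paper argument to compare against.

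On your proposal itself: the lower bound and the special case $H \leq_{\mathcal{G}} G$ are correct and cleanly argued. For the general upper bound, however, what you have is a plan rather than a proof, and the plan has a structural obstruction. You aim for a cohomomorphism $(G+H)^{\boxtimes n} \leq_{\mathcal{G}} G^{\boxtimes n} \boxtimes K_{m(n)}$ (or the same with $H$ in place of $G$) with $m(n)$ subexponential. But which of $G,H$ is the ``heavier'' summand depends on the spectral point: there exist $G,H$ and $f_1,f_2 \in \Delta(\mathcal{G})$ with $f_1(G)>f_1(H)$ and $f_2(G)<f_2(H)$. A single cohomomorphism cannot adapt to this, so no such inequality at the level of $\leq_{\mathcal{G}}$ can hold uniformly. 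Your own observation that the naive type decomposition only yields $f(G)+f(H)$ is a symptom of the same issue.

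Your closing intuition is nevertheless on target: the argument in \cite{vrana2019probabilisticrefinementasymptoticspectrum} does go through the probabilistic refinement. One shows that for $P = \lambda P_G \oplus (1-\lambda) P_H$ on $V(G)\cup V(H)$ the refinement factorises as $f(G+H,P)=f(G,P_G)^{\lambda} f(H,P_H)^{1-\lambda}$, whence maximising over $P$ gives $\max_{\lambda\in[0,1]} f(G)^{\lambda} f(H)^{1-\lambda}=\max(f(G),f(H))$. The dependence on $f$ enters through the optimal $\lambda$, which is exactly what dissolves the obstruction above. Establishing that factorisation is the substantive step, and it is not carried out in your proposal.
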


\section{The asymptotic spectrum and vertex-transitive graphs} \label{sec:asd-vtg}
For our construction, we will use a property of the asymptotic spectrum that guarantees the existence of spectral points taking all values between two given spectral point values.

\begin{lemma}\label{interpol}
    Let $f_0,f_1\in \Delta(\mathcal{G})$. %
    For every $0\leq \lambda \leq 1$, there exists $f_\lambda \in \Delta(\mathcal{G})$ such that for every vertex-transitive graph $G$, we have $f_\lambda(G) = f_0(G)^{1-\lambda}f_1(G)^{\lambda}$. 
    
\end{lemma}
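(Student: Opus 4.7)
The natural candidate for $f_\lambda$ is the function
$g(G) \coloneqq f_0(G)^{1-\lambda}\, f_1(G)^\lambda$.
Directly from the spectral-point properties of $f_0,f_1$, this $g$ satisfies the normalization $g(E_n)=n$, the multiplicativity $g(G\boxtimes H)=g(G)g(H)$, and monotonicity under $\homomoleq$. The only defining property of $\Delta(\mathcal{G})$ that $g$ can fail is additivity on disjoint unions: the map $(a,c)\mapsto a^{1-\lambda}c^\lambda$ is concave rather than affine, so in general
$$g(G \sqcup H) \;=\; \bigl(f_0(G)+f_0(H)\bigr)^{1-\lambda}\bigl(f_1(G)+f_1(H)\bigr)^\lambda \;\neq\; g(G)+g(H).$$
My plan is therefore to replace $g$ by a genuinely additive $f_\lambda \in \Delta(\mathcal{G})$, while preserving $f_\lambda(G)=g(G)$ whenever $G$ is vertex-transitive.

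The tool I would use is the probabilistic refinement of the asymptotic spectrum of graphs developed by Vrana~\cite{vrana2019probabilisticrefinementasymptoticspectrum}. The idea is to extend $f_0$ and $f_1$ to the larger semiring of formal (probabilistic) convex combinations of graphs and to look at the mixed rate: for large $n$ and $k\approx \lambda n$, consider the smallest value of the product $f_0(\cdot)^{n-k} f_1(\cdot)^k$ among auxiliary graphs into which $G^{\boxtimes n}$ cohomomorphs, normalized by an $n$th root and then sent to the limit. A standard Fekete-type argument ensures that this limit defines a multiplicative, monotone, and (crucially, \emph{in the limit}) additive function $f_\lambda$ on $\mathcal{G}$, hence an element of $\Delta(\mathcal{G})$. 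At this stage one only obtains, for every graph $G$, the inequality $f_\lambda(G) \leq g(G)$.

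It remains to show that this inequality is an equality when $G$ is vertex-transitive. For such $G$, the power $G^{\boxtimes n}$ is also vertex-transitive, so its automorphism group acts transitively on vertices. I would symmetrize: given any near-optimal witness in the mixed-rate construction, averaging over $\mathrm{Aut}(G^{\boxtimes n})$ produces a witness whose $f_0$- and $f_1$-values align with the multiplicativity identities $f_i(G^{\boxtimes n})=f_i(G)^n$, so that the mixed-rate value coincides with $g(G)^n$ up to subexponential error. Taking $n$th roots and using the already-established upper bound $f_\lambda(G) \leq g(G)$ gives equality.

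The main obstacle is precisely this last step. On arbitrary graphs the mixed rate produces only a one-sided inequality, and restoring the equality on vertex-transitive $G$ requires exploiting the symmetry of $G$ to simultaneously align the additivity defects of $f_0$ and of $f_1$. Pinning down this symmetrization in a form that is stable under the Fekete limit, and which respects the restriction that $f_0,f_1$ need not individually equal their fractional counterparts on $G$, is the technical heart of the argument.
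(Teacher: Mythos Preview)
Your high-level plan---use Vrana's probabilistic refinement to manufacture an additive $f_\lambda\in\Delta(\mathcal{G})$, then argue separately that it coincides with $g(G)=f_0(G)^{1-\lambda}f_1(G)^\lambda$ on vertex-transitive $G$---is exactly the paper's route. But the two concrete steps you describe do not match what is actually needed, and as written neither goes through.

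First, your description of $f_\lambda$ (``smallest value of $f_0(\cdot)^{n-k}f_1(\cdot)^k$ among auxiliary graphs into which $G^{\boxtimes n}$ cohomomorphs'') is not the probabilistic refinement. Vrana's construction works at the level of \emph{probability distributions on $V(G)$}: one defines $f(G,P)$ via type classes $T^n_P\subseteq V(G)^n$, and his theorem says directly that $(G,P)\mapsto f_0(G,P)^{1-\lambda}f_1(G,P)^\lambda$ is the refinement of some $f_\lambda\in\Delta(\mathcal{G})$, with
\[
f_\lambda(G)=\max_{P\in\mathcal{P}(G)} f_0(G,P)^{1-\lambda}f_1(G,^\lambda P).
\]
There is nothing to build here; membership in $\Delta(\mathcal{G})$ is a black box. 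Your ``mixed rate over auxiliary cohomomorphism targets'' is a different object, and I do not see how a Fekete argument yields additivity for it.

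Second, and more importantly, the equality step for vertex-transitive $G$ is not an averaging-over-$\mathrm{Aut}(G^{\boxtimes n})$ argument on ``witnesses''. The actual mechanism is this: for vertex-transitive $G$ one proves (via a type-class size estimate and the bound $f(G)\le \tfrac{|V(G)|}{|S|}f(G[S])$) that $f_i(G,P)$ is maximized at the \emph{uniform} distribution $U$, for \emph{each} $i\in\{0,1\}$ separately. Since both maxima are attained at the same $P=U$, the geometric mean is also maximized there, and one reads off
\[
f_\lambda(G)=f_0(G,U)^{1-\lambda}f_1(G,U)^\lambda=f_0(G)^{1-\lambda}f_1(G)^\lambda.
\]
Your symmetrization sketch never isolates this ``common maximizer'' phenomenon, which is precisely what makes vertex-transitivity relevant; without it you only get $f_\lambda(G)\le g(G)$ and no route back.
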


 To prove  \autoref{interpol}, we will use  machinery introduced by Vrana~\cite{vrana2019probabilisticrefinementasymptoticspectrum}.
Let~$G$ be a non-empty graph. We denote by $\mathcal{P}(G)$ the space of probability distributions on $V(G)$. %
For any $P \in \mathcal{P}(G)$, $n \in \NN$ and $\eps > 0$, let $T_{B_\eps(P)}^n \subseteq V(G)^n$ be the set of $n$-tuples whose empirical distribution is $\eps$-close to $P$ in $\ell_1$-norm. 
For each spectral point $f \in \Delta(\mathcal{G})$ and probability distribution $P \in \mathcal{P}(G)$, the  
\emph{probabilistic refinement} of $f$ is defined as 
$$f(G,P) = \inf_{\eps > 0} \limsup_{n\to0}  \sqrt[n]{f(G^{\boxtimes n}[T^n_{B_\eps(P)}])}.$$ We state  an important direct consequence of \cite[Theorem~1.1 and Theorem~1.2]{vrana2019probabilisticrefinementasymptoticspectrum}.

\begin{theorem} \label{thmVrana} For every $f \in \Delta(\mathcal{G})$ and every non-empty graph $G$,
\begin{equation} \label{eq:1}
    f(G) = \max_{P \in \mathcal{P}(G)}f(G,P).
\end{equation}
Furthermore, for every $f_0,f_1 \in \Delta(\mathcal{G})$ and every $0\leq \lambda \leq 1$, the function $$ (G,P) \mapsto  f_0(G,P)^{1-\lambda }f_1(G,P)^{\lambda}$$
is the %
probabilistic refinement of some $f_\lambda \in \Delta(\mathcal{G})$, and $$f_\lambda(G) = \max_{P \in \mathcal{P}(G)} f_0(G,P)^{1-\lambda} f_1(G,P)^{\lambda}.$$
\end{theorem}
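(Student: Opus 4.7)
The plan is to apply the second half of \autoref{thmVrana}, which directly produces a spectral point $f_\lambda \in \Delta(\mathcal{G})$ with
\[
f_\lambda(G) \;=\; \max_{P \in \mathcal{P}(G)}\, f_0(G,P)^{1-\lambda}\, f_1(G,P)^{\lambda}
\]
for every non-empty graph $G$. The lemma then reduces to showing that for vertex-transitive $G$ this maximum equals $f_0(G)^{1-\lambda}f_1(G)^{\lambda}$; the existence assertion and the value on graphs with no vertices (which is vacuous) take care of the remaining cases.

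The upper bound is immediate from~\eqref{eq:1}: since $f_i(G,P) \leq f_i(G)$ for every $P \in \mathcal{P}(G)$, every candidate in the maximum is bounded by $f_0(G)^{1-\lambda}f_1(G)^{\lambda}$. For the matching lower bound I would evaluate the objective at the uniform distribution $U$ on $V(G)$, which reduces the task to the following symmetry claim: for every $f \in \Delta(\mathcal{G})$ and every vertex-transitive graph $G$ one has $f(G, U) = f(G)$. Granting the claim, $f_i(G,U) = f_i(G)$ for $i = 0,1$ produces the desired lower bound $f_0(G)^{1-\lambda}f_1(G)^{\lambda}$, matching the upper bound.

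To establish the symmetry claim I would exploit the automorphism group $\Gamma = \mathrm{Aut}(G)$, which acts transitively on $V(G)$ and coordinatewise on $V(G)^n$. Each $\sigma \in \Gamma$ is a graph isomorphism of $G$, hence of $G^{\boxtimes n}$, and sends the typical set $T_{B_\eps(P)}^n$ bijectively onto $T_{B_\eps(\sigma\cdot P)}^n$ while inducing an isomorphism between the corresponding induced subgraphs of $G^{\boxtimes n}$. Since $f$ is isomorphism-invariant, the definition of the probabilistic refinement then forces $f(G, P) = f(G, \sigma\cdot P)$ for every $\sigma \in \Gamma$ and $P \in \mathcal{P}(G)$. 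By transitivity, the $\Gamma$-average of any orbit in $\mathcal{P}(G)$ is exactly $U$. The main obstacle is converting this $\Gamma$-invariance into the inequality $f(G, U) \geq f(G, P)$: this requires a log-concavity (or mixture super-multiplicativity) property of the map $P \mapsto f(G,P)$ on the simplex $\mathcal{P}(G)$, which I would isolate as an auxiliary lemma and prove by a direct merging argument on typical sets (concatenating strings from $T_{B_\eps(P)}^n$ and $T_{B_\eps(Q)}^n$ to produce strings typical for a mixture), or extract from Vrana's framework. Once that ingredient is in place, Jensen's inequality applied to the orbit of any $P$ yields $f(G, U) \geq f(G, P)$, and combining with~\eqref{eq:1} gives the claimed equality $f(G,U) = f(G)$, completing the proof.
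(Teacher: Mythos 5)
Your proposal does not actually prove \autoref{thmVrana}. Its very first step is to ``apply the second half of \autoref{thmVrana}'', so the statement you are asked to establish is being used as an ingredient rather than being proved --- the argument is circular as a proof of the stated theorem. In the paper, \autoref{thmVrana} is not proved at all: it is quoted as an immediate reformulation of Vrana's Theorems~1.1 and~1.2 (a fact about the existence and logarithmic convexity of probabilistic refinements), and any honest proof would have to reproduce that source material, not invoke the theorem itself.

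What you have actually written is a proof sketch for \autoref{interpol}, the lemma that \emph{follows} from \autoref{thmVrana}. Even read that way, there is a genuine gap, and a genuine divergence from the paper. Both you and the paper reduce \autoref{interpol} to the claim that $f(G,U)=f(G)$ for vertex-transitive $G$ (the paper's \autoref{vTransProp}). The paper proves this by a direct counting argument: $|T_U^{kn}|\ge |V(G)|^{kn}/(kn+1)^{|V(G)|}$, and vertex-transitivity of $G^{\boxtimes kn}$ together with \autoref{VtransitiveGraphs} gives $f(G)^{kn}\le (kn+1)^{|V(G)|}f(G^{\boxtimes kn}[T_U^{kn}])$; taking $kn$-th roots and using \autoref{rationalPoints} finishes it. You instead propose automorphism-invariance of $P\mapsto f(G,P)$ plus log-concavity in $P$ and Jensen's inequality on the orbit average. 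The automorphism-invariance part is fine, and the observation that the orbit average of any $P$ under a transitive group is $U$ is correct. But the log-concavity of $P\mapsto\log f(G,P)$ is precisely the piece you leave as ``an auxiliary lemma... or extract from Vrana's framework''; it is not established, and it is not one of the facts the paper records from Vrana. The ``merging typical sequences'' heuristic is plausible but glosses over the usual technical care needed to control the induced subgraph structure under concatenation and the $\limsup$ in the definition of $f(G,P)$. So even as a proof of \autoref{interpol}, the proposal is incomplete where the paper's is complete, and the paper's route via \autoref{VtransitiveGraphs} is both more elementary and self-contained.
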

Two other properties which will be useful to us are:

\begin{lemma}[{\cite{vrana2019probabilisticrefinementasymptoticspectrum}, see also \cite[Lemma 2.15]{deboer2024asymptoticspectrumdistancegraph}}]\label{VtransitiveGraphs} Let $G$ be a vertex-transitive graph and $S \subseteq V(G)$.  
Then for all $f \in \Delta(\mathcal{G})$, we have $$f(G) \leq \frac{|V(G)|}{|S|} f(G[S]).$$
\end{lemma}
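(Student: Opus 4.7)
The plan is to combine a covering observation exploiting vertex-transitivity with the multiplicativity and additivity of $f$, applied to strong powers of $G$. Passing to powers absorbs the logarithmic overhead that an honest probabilistic covering bound carries, which is what produces the clean ratio $|V(G)|/|S|$.

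First I would establish the finite ingredient: if $V(G) = \sigma_1(S) \cup \cdots \cup \sigma_k(S)$ for automorphisms $\sigma_i \in \mathrm{Aut}(G)$, then $G \homomoleq \bigsqcup_{i=1}^k G[S]$. The cohomomorphism sends each $v$ to $(\sigma_{i(v)}^{-1}(v), i(v))$ in the $i(v)$th copy of $G[S]$, where $i(v)$ is any index with $v \in \sigma_{i(v)}(S)$. For distinct non-adjacent $u,v$, if $i(u) \neq i(v)$ the images land in different components and are automatically distinct and non-adjacent; if $i(u) = i(v) =: i$, the images $\sigma_i^{-1}(u), \sigma_i^{-1}(v)$ lie in $S$ and are distinct and non-adjacent because $\sigma_i$ is an automorphism of $G$. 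Properties (iii) and (iv) of $f$ then give $f(G) \leq k \cdot f(G[S])$.

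Next I would apply this to $G^{\boxtimes n}$, which is itself vertex-transitive via the coordinatewise action of $\mathrm{Aut}(G)^n$; for any tuple $(\sigma_1,\ldots,\sigma_n)$ the product $\sigma_1(S) \times \cdots \times \sigma_n(S)$ induces a copy of $G[S]^{\boxtimes n}$. A standard random covering argument---sample each $\sigma_i$ uniformly in $\mathrm{Aut}(G)$ independently, observe that any fixed tuple lies in the random translate with probability $(|S|/|V(G)|)^n$, and union-bound over the $|V(G)|^n$ targets---produces a cover of $V(G)^n$ of size $k_n \leq \lceil \tfrac{|V(G)|^n}{|S|^n}(n \log |V(G)| + 1) \rceil$. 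Combining this with the first step and property (ii) yields $f(G)^n = f(G^{\boxtimes n}) \leq k_n \cdot f(G[S])^n$; taking $n$th roots and letting $n \to \infty$ gives $f(G) \leq (|V(G)|/|S|) \cdot f(G[S])$, since $k_n^{1/n} \to |V(G)|/|S|$. The main conceptual point is that the logarithmic slack from the probabilistic cover is exactly what vanishes under these power-and-root asymptotics, which I expect to be the only subtle step.
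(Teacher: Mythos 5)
The paper states \autoref{VtransitiveGraphs} with a citation and does not reproduce a proof, so there is no in-text argument to compare against; on its own merits your proof is correct. The finite covering step does give a cohomomorphism $G \to \bigsqcup_{i} G[\sigma_i(S)]$ (distinctness and non-adjacency are preserved within a block because $\sigma_i$ is an automorphism, and across blocks because the disjoint union has no inter-block edges), the probabilistic argument does produce a cover of $V(G)^n$ of size $(|V(G)|/|S|)^n \cdot \mathrm{poly}(n)$ (here transitivity plus orbit--stabilizer is exactly what makes $\Pr[\sigma^{-1}(v) \in S] = |S|/|V(G)|$ for uniform $\sigma$), and taking $n$th roots kills the polynomial slack. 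This cover-then-tensor-power mechanism is the standard way to upgrade a non-fractional covering bound to the sharp ratio and matches the style used elsewhere in the paper, e.g.\ the proof of \autoref{vTransProp}, which likewise absorbs a $(kn+1)^{|V(G)|}$ factor by passing to $kn$th roots.
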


\begin{lemma}[{\cite[Proposition 3.4]{vrana2019probabilisticrefinementasymptoticspectrum}}]\label{lem:cong-seq}
Let $G$ be a graph and $P \in \mathcal{P}(G)$. Suppose that $(P_n)_{n \in \mathbb{N}} \subseteq \mathcal{P}(G)$ is a sequence converging to $P$ in $\ell_1$-norm such that for all $n \in \mathbb{N}$ and all $v \in V(G)$ we have $nP_n(v) \in \mathbb{N}$.  %
Then 
$$f(G,P) = \lim_{n \to \infty} \sqrt[n]{f(G^{\boxtimes n}[T_{P_n}^n])}. $$
\end{lemma}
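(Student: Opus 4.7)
The plan is to prove the two inequalities
\[
\limsup_{n \to \infty} \sqrt[n]{f(G^{\boxtimes n}[T_{P_n}^n])} \leq f(G,P) \leq \liminf_{n \to \infty} \sqrt[n]{f(G^{\boxtimes n}[T_{P_n}^n])},
\]
which together yield the claimed equality. For the upper bound, I would exploit that $\|P_n - P\|_1 \to 0$ to conclude that, for every fixed $\eps > 0$, we have $P_n \in B_\eps(P)$ and hence $T^n_{P_n} \subseteq T^n_{B_\eps(P)}$ for all sufficiently large~$n$. The identity map on the smaller vertex set is a cohomomorphism from $G^{\boxtimes n}[T^n_{P_n}]$ into $G^{\boxtimes n}[T^n_{B_\eps(P)}]$ (distinct non-adjacent vertices in an induced subgraph remain distinct and non-adjacent in a larger induced subgraph), so by property~(iv) we get $f(G^{\boxtimes n}[T^n_{P_n}]) \leq f(G^{\boxtimes n}[T^n_{B_\eps(P)}])$. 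Taking $n$-th roots, $\limsup_n$, and $\inf_\eps$ produces the upper bound directly from the definition of $f(G,P)$.

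For the lower bound, fix $\eps > 0$. The set $T^n_{B_\eps(P)}$ decomposes as a disjoint union $\bigsqcup_Q T^n_Q$ over rational distributions $Q$ of denominator~$n$ with $\|Q - P\|_1 < \eps$; the number of such types is at most $(n+1)^{|V(G)|}$. The identity map on vertex sets is a cohomomorphism from $G^{\boxtimes n}[T^n_{B_\eps(P)}]$ to the disjoint union of the type-class subgraphs (going to the disjoint union removes cross-type edges and thus only introduces new non-edges, which is exactly what a cohomomorphism needs to allow). Properties~(iii) and~(iv) then give
\[
f(G^{\boxtimes n}[T^n_{B_\eps(P)}]) \leq \sum_Q f(G^{\boxtimes n}[T^n_Q]) \leq (n+1)^{|V(G)|} \max_{Q} f(G^{\boxtimes n}[T^n_Q]).
\]
Taking $n$-th roots, the polynomial factor becomes $1+o(1)$, and it suffices to show that the maximum over $Q \in B_\eps(P)$ is asymptotically dominated by $\sqrt[n]{f(G^{\boxtimes n}[T^n_{P_n}])}$, that is, that the chosen sequence~$(P_n)$ is asymptotically not outperformed by any other nearby sequence of rational types.

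The hard part is exactly this last comparison: controlling $f(G^{\boxtimes n}[T^n_{Q^*_n}])$ for an adversarial sequence $Q^*_n \to P$ in terms of $f(G^{\boxtimes n}[T^n_{P_n}])$ for the fixed sequence. I would attack this via a Fekete-style supermultiplicativity argument on tensor powers, noting that for any denominator-$m$ distribution~$P_m$ the concatenation inclusion $(T^m_{P_m})^k \subseteq T^{mk}_{P_m}$ yields, using the factorization of induced subgraphs of strong products and property~(ii), the supermultiplicative inequality $f(G^{\boxtimes mk}[T^{mk}_{P_m}]) \geq f(G^{\boxtimes m}[T^m_{P_m}])^k$. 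Combined with a small-perturbation argument (any two types $Q,Q'$ of denominator $n$ with $\|Q-Q'\|_1$ small differ by at most a bounded number of coordinate edits, which is controlled after passing to larger common-multiple lengths and exploiting the vertex-transitivity of each type-class graph $G^{\boxtimes n}[T^n_Q]$ under the coordinate-permutation action of $S_n$), one shows that the quantity $\sqrt[n]{f(G^{\boxtimes n}[T^n_{P_n}])}$ in fact converges to a value depending only on the limit~$P$, not on the specific rational sequence approximating it. Letting $\eps \to 0$ in the bound above and matching this limit against $f(G,P)$ from the definition closes the argument. The main obstacle, as anticipated, is this independence-of-approximation step: making rigorous the intuition that two nearby rational types yield $f$-values that differ by a sub-exponential factor, since the underlying graphs $G^{\boxtimes n}[T^n_Q]$ do not admit any obvious cohomomorphism comparing them directly.
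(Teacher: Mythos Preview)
The paper does not prove this lemma: it is quoted verbatim from Vrana \cite[Proposition~3.4]{vrana2019probabilisticrefinementasymptoticspectrum} and used as a black box. So there is no in-paper argument to compare your proposal against, and your sketch should be judged on its own.

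Your upper bound is correct and cleanly argued. The lower-bound skeleton (decompose $T^n_{B_\eps(P)}$ into type classes, use additivity and the polynomial type-count, reduce to comparing the maximising type against $P_n$) is also the right shape. The gap is precisely where you flag it: the ``independence of the approximating sequence'' step. Your perturbation sketch does not go through as written. The claim that two types $Q,Q'$ of denominator $n$ with small $\|Q-Q'\|_1$ ``differ by at most a bounded number of coordinate edits'' is false: a sequence of type $Q$ and one of type $Q'$ differ in roughly $\tfrac{n}{2}\|Q-Q'\|_1$ coordinates, which is linear in $n$, not bounded. Vertex-transitivity of $G^{\boxtimes n}[T^n_Q]$ under $S_n$ lets you move within a single type class, but gives no cohomomorphism between $G^{\boxtimes n}[T^n_Q]$ and $G^{\boxtimes n}[T^n_{Q'}]$ for $Q \neq Q'$, so it does not by itself control the ratio of their $f$-values. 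The supermultiplicativity you note is genuine and is part of Vrana's argument, but on its own it only gives existence of the limit along a \emph{fixed} rational $P_m$ (via Fekete), not equality of limits for two different approximating sequences. Establishing that equality is exactly the content of Vrana's Proposition~3.4, and it requires an additional idea (in Vrana's treatment, a careful product/embedding argument that compares $T^{m+n}_{R}$ against $T^m_{Q} \times T^n_{Q'}$ for a suitable convex combination $R$, together with continuity in $P$). Without such a step your lower bound remains a heuristic.
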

In particular, \autoref{lem:cong-seq} implies the following.

\begin{lemma}[\cite{vrana2019probabilisticrefinementasymptoticspectrum}]\label{rationalPoints}
Let $n \in \mathbb{N}$ and $P \in \mathcal{P}(G)$ be a probability distribution such that $nP(v) \in \NN$ for all $v \in V(G)$. We can define $T^{kn}_P \subseteq V(G)^{kn}$ to be the set of $kn$-tuples in which every $v \in V(G)$ appears exactly $knP(v)$ times. Then
\[
f(G,P) = \lim_{k \to \infty} \sqrt[kn]{f(G^{\boxtimes kn}[T^{kn}_P])}.
\]
\end{lemma}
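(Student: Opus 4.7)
The plan is to deduce \autoref{rationalPoints} directly from \autoref{lem:cong-seq} by constructing an appropriate sequence of probability distributions and then extracting the subsequence indexed by multiples of $n$. Specifically, I want to build a sequence $(P_m)_{m \in \NN} \subseteq \mathcal{P}(G)$ satisfying the hypotheses of \autoref{lem:cong-seq} (namely, $mP_m(v) \in \NN$ for all $m$ and $v$, and $P_m \to P$ in $\ell_1$-norm) such that additionally $P_m = P$ whenever $n \mid m$. Once such a sequence is in hand, \autoref{lem:cong-seq} guarantees the full-sequence limit
\[
f(G,P) = \lim_{m \to \infty} \sqrt[m]{f(G^{\boxtimes m}[T_{P_m}^m])},
\]
and restricting to the subsequence $m = kn$ (where $P_m = P$ by construction, so $T_{P_m}^m = T_P^{kn}$) yields exactly the desired identity.

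The first step is the construction of $(P_m)_m$. For $m$ a multiple of $n$, set $P_m := P$; this is legitimate because $mP(v) = (m/n) \cdot nP(v) \in \NN$ by the hypothesis on $P$. For $m$ not a multiple of $n$, I take any probability distribution $P_m$ on $V(G)$ whose values are of the form $a_v/m$ with $a_v \in \NN$ and $\sum_v a_v = m$, chosen so that $\|P_m - P\|_1 \leq 2|V(G)|/m$; a standard rounding argument (e.g., round $mP(v)$ down for each $v$, then distribute the remaining probability mass arbitrarily among vertices) produces such a $P_m$. This immediately gives $mP_m(v) \in \NN$, and since $|V(G)|$ is fixed, the bound $\|P_m - P\|_1 \to 0$ shows $P_m \to P$ in $\ell_1$-norm.

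The second step is simply to apply \autoref{lem:cong-seq} to the sequence $(P_m)_m$ just constructed, then observe that whenever a sequence of real numbers converges, every subsequence converges to the same limit; in particular the subsequence indexed by $m = kn$ converges to $f(G,P)$. Since $P_{kn} = P$ forces $T_{P_{kn}}^{kn} = T_P^{kn}$, this subsequence is precisely $\sqrt[kn]{f(G^{\boxtimes kn}[T_P^{kn}])}$, completing the proof. The only substantive work is the rounding construction, which is a routine combinatorial exercise with no real obstacle; the main insight is simply that by enforcing $P_m = P$ along the arithmetic progression of multiples of $n$, we can read off the claim as a subsequential consequence of \autoref{lem:cong-seq}.
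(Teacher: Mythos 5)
Your proposal is correct and takes essentially the same route the paper intends: the paper presents \autoref{rationalPoints} as a direct consequence of \autoref{lem:cong-seq} (``In particular, \autoref{lem:cong-seq} implies the following'') without spelling out the details, and your construction — taking $P_m = P$ along multiples of $n$, rounding elsewhere, applying \autoref{lem:cong-seq} to the full sequence, and reading off the subsequence $m = kn$ — is exactly the standard way to make that implication precise.
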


 The following fact is present in \cite{vrana2019probabilisticrefinementasymptoticspectrum} albeit not spelled out in a separate lemma. For convenience of the reader, we give it with a proof here. 

\begin{lemma}\label{vTransProp}
    Let $G$ be a vertex-transitive graph, $U  \in \mathcal{P}(G)$ the uniform distribution on its vertices and $f \in \Delta(\mathcal{G})$. Then $f(G) = f(G,U)$. 
\end{lemma}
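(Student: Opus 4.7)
The plan is to prove $f(G)=f(G,U)$ by two inequalities. The easy direction, $f(G,U)\le f(G)$, is immediate from Theorem~\ref{thmVrana}, since $f(G)=\max_{P\in\mathcal{P}(G)} f(G,P)\ge f(G,U)$.

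For the reverse inequality $f(G)\le f(G,U)$, the key observation is that if $G$ is vertex-transitive, then so is $G^{\boxtimes n}$: the group $\mathrm{Aut}(G)^n$ embeds into $\mathrm{Aut}(G^{\boxtimes n})$ and acts transitively on $V(G)^n=V(G^{\boxtimes n})$. This lets me apply Lemma~\ref{VtransitiveGraphs} to the graph $G^{\boxtimes n}$ and the subset $S=T^{n}_{B_\varepsilon(U)}\subseteq V(G)^n$, yielding
\[
f(G^{\boxtimes n})\;\le\; \frac{|V(G)|^n}{|T^{n}_{B_\varepsilon(U)}|}\; f\bigl(G^{\boxtimes n}[T^{n}_{B_\varepsilon(U)}]\bigr).
\]
Using multiplicativity $f(G^{\boxtimes n})=f(G)^n$ from the definition of $\Delta(\mathcal{G})$ and taking $n$-th roots gives
\[
f(G)\;\le\; \sqrt[n]{\frac{|V(G)|^n}{|T^{n}_{B_\varepsilon(U)}|}}\cdot \sqrt[n]{f\bigl(G^{\boxtimes n}[T^{n}_{B_\varepsilon(U)}]\bigr)}.
\]

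Now I would argue that the first factor tends to $1$ as $n\to\infty$. Indeed, if one draws $n$ i.i.d.\ samples from $U$ on $V(G)$, the weak law of large numbers says that the empirical distribution lies in $B_\varepsilon(U)$ with probability tending to $1$; equivalently, $|T^{n}_{B_\varepsilon(U)}|/|V(G)|^n\to 1$, so its $n$-th root also tends to $1$. Taking $\limsup_{n\to\infty}$ on both sides and then $\inf_{\varepsilon>0}$ leaves exactly $f(G)\le f(G,U)$ by the definition of the probabilistic refinement. Combined with the easy direction, this gives the claimed equality.

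I do not anticipate a genuine obstacle; the proof is short once one has Lemma~\ref{VtransitiveGraphs} and Theorem~\ref{thmVrana}. The only delicate point is the concentration statement for the uniform distribution, but it is a standard application of the law of large numbers (or equivalently of the fact that the typical types under $U$ have maximal entropy $\log|V(G)|$, matching the trivial upper bound $|T^{n}_{B_\varepsilon(U)}|\le|V(G)|^n$).
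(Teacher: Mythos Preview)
Your proof is correct and follows essentially the same strategy as the paper: apply Lemma~\ref{VtransitiveGraphs} to the vertex-transitive graph $G^{\boxtimes n}$ with a type-class subset, use multiplicativity, and show the counting factor is subexponential. The only cosmetic difference is that the paper works with the exact type class $T_U^{kn}$ along multiples of $k=|V(G)|$ (using the standard multinomial bound $|T_U^{kn}|\ge (kn+1)^{-k}k^{kn}$ and Lemma~\ref{rationalPoints}), whereas you work directly with the $\varepsilon$-balls $T^n_{B_\varepsilon(U)}$ and the law of large numbers before taking $\inf_{\varepsilon}\limsup_n$; both routes yield the same inequality $f(G)\le f(G,U)$.
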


In other words, \autoref{vTransProp} says that for vertex-transitive graphs $G$ the maximum in \autoref{eq:1} is achieved at $P = U$.

\begin{proof}

    Let $k= |V(G)|$. For all $n$, the graph $G^{\boxtimes kn}[T_U^{kn}]$ is an induced subgraph of the vertex-transitive graph~$G^{\boxtimes kn}$. Furthermore, the size of  $T_U^{kn}$ is bounded from below by $$|T_U^{kn}| \geq \frac{1}{(kn+1)^{|V(G)|}}2^{knH(U)} = \frac{1}{(kn+1)^{|V(G)|}}|V(G)|^{kn}.$$ 
    Therefore, by \autoref{VtransitiveGraphs}, for any $ f \in \Delta(\mathcal{G})$,
    $$f(G^{\boxtimes{kn}}) \leq \frac{|V(G^{\boxtimes kn})|}{|V(G^{\boxtimes kn}[T_U^{kn}])|}  f(G^{\boxtimes kn}[T_U^{kn}]) \leq (kn+1)^{|V(G)|} f(G^{\boxtimes kn}[T_U^{kn}]).$$
    Recall that by properties of $\Delta(\mathcal{G})$, $f(G^{\boxtimes kn}) =f(G)^{kn}$. Thus, taking $kn$th roots and letting $n$ go to $\infty$, this gives us that  
    \[
    f(G) \leq \lim_{n \to \infty} \sqrt[kn]{f(G^{\boxtimes kn}[T_U^{kn}])} = f(G,U),
    \]
    where the last equality is true due to \autoref{rationalPoints}. The reverse inequality $f(G) \geq f(G,U)$ follows from \autoref{eq:1} in \autoref{thmVrana}.
\end{proof}

We are now in a position to give a proof of \autoref{interpol}.

\begin{proof}[Proof of \autoref{interpol}]
   
    By \autoref{thmVrana}, the function $f_\lambda $ given for non-empty graphs $G$ by $$f_\lambda(G) = \max_{P \in \mathcal{P}(G)} f_0(G,P)^{1-\lambda }f_1(G,P)^{\lambda}$$ is an element of $\Delta(\mathcal{G})$. 
    Let $G$ be vertex-transitive. Then the maximal value over $P\in\mathcal{P}(G)$ of both $f_0(G,P)$ and $f_1(G,P)$ is reached when $P = U$ by \autoref{vTransProp}. 
    Hence, 
    \begin{align*}
      f_\lambda(G) &= \max_{P \in \mathcal{P}(G)} f_0(G,P)^{1-\lambda} f_1(G,P)^{\lambda}\\
        &= f_0(G,U)^{1-\lambda} f_1(G,U)^{\lambda}
        = f_0(G)^{1-\lambda }f_1(G)^{\lambda}. \qedhere
    \end{align*}
    
\end{proof}

We see that we have more control over spectral points when we evaluate them on  vertex-transitive graphs. In this regard, we make a useful observation.
 \begin{lemma}\label{lem:vertexTrans-Example}
     For any distinct $f_0, f_1 \in \Delta(\mathcal{G})$, there exists a vertex-transitive graph $G$ such that $f_0(G)\neq f_1(G)$. 
 \end{lemma}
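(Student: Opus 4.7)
The plan is to exploit the fact that the induced subgraphs of strong powers $H^{\boxtimes n}[T_P^n]$ on a type class are vertex-transitive, which lets us transfer a disagreement between $f_0$ and $f_1$ on an arbitrary graph into a disagreement on a vertex-transitive one. I would proceed as follows.

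First, since $f_0 \neq f_1$ as functions on $\mathcal{G}$, pick any graph $H$ with $f_0(H) \neq f_1(H)$; by property (i), $H$ must be non-empty, and up to swapping $f_0$ and $f_1$ we may assume $f_0(H) > f_1(H)$. Using $f_0(H) = \max_{P \in \mathcal{P}(H)} f_0(H,P)$ from \autoref{thmVrana}, choose a maximizer $P^\ast$, so that
\[
f_0(H, P^\ast) = f_0(H) > f_1(H) \geq f_1(H, P^\ast).
\]
In particular $f_0(H,P^\ast) \neq f_1(H,P^\ast)$. If $P^\ast$ already has rational values, we are essentially done; otherwise, I would approximate it by a sequence $(P_n)_{n \in \mathbb{N}} \subseteq \mathcal{P}(H)$ converging to $P^\ast$ in $\ell_1$-norm and satisfying $nP_n(v) \in \mathbb{N}$ for every vertex $v$ (for instance by rounding $\lfloor nP^\ast(v) \rfloor$ and correcting one coordinate so the counts sum to $n$).

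Next, set $G_n \coloneqq H^{\boxtimes n}[T_{P_n}^n]$. By \autoref{lem:cong-seq}, $f_i(H, P^\ast) = \lim_{n \to \infty} \sqrt[n]{f_i(G_n)}$ for $i = 0,1$. Since these two limits differ, there must exist some $n$ for which $f_0(G_n) \neq f_1(G_n)$. It remains to verify that $G_n$ is vertex-transitive. This follows from the fact that the symmetric group $S_n$ acts on $V(H)^n$ by permuting coordinates, and this action is by automorphisms of the strong power $H^{\boxtimes n}$ (since edges in $H^{\boxtimes n}$ are defined by a symmetric coordinatewise condition). The type class $T_{P_n}^n$, consisting of $n$-tuples in which each $v \in V(H)$ appears exactly $nP_n(v)$ times, is invariant under this action, and any two of its elements differ by a permutation of coordinates, so $S_n$ acts transitively on $V(G_n)$. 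Thus $G_n$ is vertex-transitive and distinguishes $f_0$ and $f_1$.

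I do not expect a serious obstacle; the only minor point to be careful about is justifying the rational approximation step when $P^\ast$ has irrational values, which is handled cleanly by \autoref{lem:cong-seq}. The key conceptual observation is simply that type-restricted strong powers are vertex-transitive, so the machinery of probabilistic refinement already supplies a canonical way to construct distinguishing vertex-transitive graphs out of distinguishing ones.
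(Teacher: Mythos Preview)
Your proposal is correct and follows essentially the same argument as the paper: pick a graph $H$ on which $f_0$ and $f_1$ disagree, pass to a maximizing distribution for $f_0$, approximate it by rational types, and use \autoref{lem:cong-seq} to conclude that $f_0$ and $f_1$ must already disagree on some type-class induced subgraph $H^{\boxtimes n}[T^n_{P_n}]$, which is vertex-transitive. Your write-up even adds the explicit justification of vertex-transitivity via the $S_n$-action, which the paper leaves implicit.
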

\begin{proof}
   
 Since $f_0 \neq f_1$, there exists some graph $H$ such that, say,  $f_0(H) >f_1(H)$. Thus, $\max_{P \in \mathcal{P}(H)} f_0(H,P) > \max_{P \in \mathcal{P}(H)} f_1(H,P)$. Let $Q \in \mathcal{P}(H)$ be such that the maximum on the left-hand side is reached. 
 Then $$f_0(H,Q) > \max_{P \in \mathcal{P}(H)} f_1(H,P) \geq f_1(H,Q).$$ By \autoref{lem:cong-seq}, \[
 f_0(H,Q) = \lim_{n \to \infty}  \sqrt[n]{f_0(H^{\boxtimes n}[T^n_{Q_n}]) } \quad \text{ and } \quad f_1(H,Q) = \lim_{n \to \infty}    \sqrt[n]{f_1(G^{\boxtimes n}[T^n_{Q_n}]) }
 \]
 for any sequence $Q_n \in \mathcal{P}(H)$ with  $nQ_n(v)\in \mathbb{N} \,\,\,\,\forall v \in V(H)$ that converges in $\ell_1$-norm to $Q$. Hence, for $n$ large enough,  $\sqrt[n]{f_0(H^{\boxtimes n}[T^n_{Q_n}]) }  >  \sqrt[n]{f_1(H^{\boxtimes n}[T^n_{Q_n}]) }$ and so $$f_0(H^{\boxtimes n}[T^n_{Q_n}]) > f_1(H^{\boxtimes n}[T^n_{Q_n}]).$$ The graph $ H^{\boxtimes n}[T^n_{Q_n}] $ is vertex-transitive, whence the claim.
\end{proof}

\section{Complement of projective rank equals fractional clique covering number on fraction graphs}
\label{sec:fhb}
We will use a special family of graphs, which we call fraction graphs, to ``simulate'' rational numbers in our construction of a universal suborder of $(\mathcal{G}, \asympleq)$.
These have previously been used to study variations of the chromatic number~\cite{Vince1988StarCN} and to construct new lower bounds on Shannon capacity of odd cycles \cite{MR3906144, deboer2024asymptoticspectrumdistancegraph}.\footnote{They appear in the literature also under the names of circular graphs \cite{polak2020newmethodscodingtheory}, cycle-powers, (the
complement of) rational complete graphs \cite{Hell2004GraphsAH} and circular complete graphs~\cite{zhuColourings}.}

For $p, q \in \mathbb{N}$ such that $p/q \geq 2$ we denote by $E_{p/q}$ the graph with vertex set $\{0,1, \dots, p-1\}$, which we identify with $\ZZ_p$, and so that two vertices $i,j$ are adjacent if and only if the distance between them is strictly less than $q$ $\pmod p$. These we call \emph{fraction graphs}. They model the rational numbers $\mathbb{Q}_{\geq 2}$ with their usual ordering
inside the  preorder $(\mathcal{G}, \asympleq)$, as follows:

\begin{lemma}[\cite{Hell2004GraphsAH, deboer2024asymptoticspectrumdistancegraph}]
For any $p,q,r,s \in \NN$ with $p/q, r/s \geq2$, the following are equivalent:
\begin{enumerate}[\upshape(i)]
    \item $E_{p/q} \homomoleq E_{r/s}$,
    \item $E_{p/q} \asympleq E_{r/s}$,
    \item $p/q\leq r/s$.
\end{enumerate}
\end{lemma}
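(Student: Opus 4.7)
The plan is to prove the cyclic chain (i)$\Rightarrow$(ii)$\Rightarrow$(iii)$\Rightarrow$(i), of which the first implication is immediate from the definitions: a cohomomorphism $E_{p/q} \homomoleq E_{r/s}$ gives $E_{p/q}^{\boxtimes n} \homomoleq E_{r/s}^{\boxtimes n}$ for every $n \in \mathbb{N}$ by taking $n$-fold strong products, so $E_{p/q} \asympleq E_{r/s}$ with the zero error term.

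For (ii)$\Rightarrow$(iii), the plan is to apply \autoref{DualityThm} with a spectral point that evaluates to $p/q$ on $E_{p/q}$. The natural candidate is the fractional clique covering number $\overline{\chi}_f(G) \coloneqq \chi_f(\overline{G})$, which lies in $\Delta(\mathcal{G})$: normalization $\overline{\chi}_f(E_n) = n$ is clear; multiplicativity under $\boxtimes$ follows from the multiplicativity of $\chi_f$ under the disjunctive (co-normal) product of the complements; additivity under $\sqcup$ follows from additivity of $\chi_f$ under joins; and monotonicity under $\homomoleq$ is immediate. The complement $\overline{E_{p/q}}$ is the circular complete graph on $\{0,\dots,p-1\}$, whose fractional chromatic number is classically $p/q$ (and likewise for $E_{r/s}$). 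Thus (ii) and \autoref{DualityThm} yield $p/q = \overline{\chi}_f(E_{p/q}) \leq \overline{\chi}_f(E_{r/s}) = r/s$.

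For (iii)$\Rightarrow$(i), the plan is to exhibit the scaling map $\phi(i) = \lfloor ir/p \rfloor \pmod r$ as a cohomomorphism $E_{p/q}\to E_{r/s}$. For any non-edge $(i,j)$ of $E_{p/q}$, after reordering so that $q \leq j - i \leq p - q$, the standard inequalities $\lfloor x - y \rfloor \leq \lfloor x \rfloor - \lfloor y \rfloor \leq \lceil x - y \rceil$ applied with $x = jr/p$, $y = ir/p$, together with the hypothesis $qr \geq ps$, give $s \leq \phi(j) - \phi(i) \leq r - s$, so $(\phi(i), \phi(j))$ is a non-edge of $E_{r/s}$. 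This is the classical construction of homomorphisms between circular complete graphs (cf.~\cite{Hell2004GraphsAH}).

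The main content is (ii)$\Rightarrow$(iii) via the spectral-point argument, relying on the classical value of the fractional chromatic number on circular complete graphs; (iii)$\Rightarrow$(i) is a short floor-arithmetic computation once the scaling map is written down, and (i)$\Rightarrow$(ii) is formal. The only place where one might be tempted to worry is the rounding in (iii)$\Rightarrow$(i), but the pair of floor inequalities above shows that integrality of $s$ absorbs the rounding exactly.
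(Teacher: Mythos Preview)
The paper does not supply its own proof of this lemma; it is stated with a citation to \cite{Hell2004GraphsAH, deboer2024asymptoticspectrumdistancegraph} and used as a black box. Your argument is correct and is precisely the standard one: (i)$\Rightarrow$(ii) is formal, (ii)$\Rightarrow$(iii) follows from \autoref{DualityThm} together with $\overline{\chi_f}(E_{p/q})=p/q$ (this is exactly \autoref{chilemma} in the paper), and (iii)$\Rightarrow$(i) is the classical scaling homomorphism between circular complete graphs. The floor-arithmetic check is fine: from $q\le j-i\le p-q$ and $qr\ge ps$ one gets $s\le (j-i)r/p\le r-s$, and since $s$ and $r-s$ are integers the floor/ceiling bounds land exactly where needed; also $0\le i\le p-1$ forces $0\le \lfloor ir/p\rfloor\le r-1$, so the reduction mod $r$ is vacuous and the computed difference is the actual circular distance.
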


In particular, $E_{p/q}$ and $E_{r/s}$ are equivalent under asymptotic cohomomorphism  if and only if they are equivalent under cohomomorphism if and only if $p/q = r/s$ if and only if $\phi(E_{p/q})= \phi(E_{r/s})$ for all $\phi \in \Delta(\mathcal{G}).$

The \emph{fractional clique covering number} $\overline{\chi_f}(G)$ of any graph $G$ is defined as the supremum over all fractions $a/b$ with $a, b \in \NN$ for which there exists an assignment of subsets of $\{1,\dots,a\}$ of size $b$ to each vertex of $G$ such that non-adjacent vertices receive disjoint sets. 
(For other characterizations see \cite{Scheinerman1997FractionalGT}.)
It is known that $\overline{\chi_f} \in \Delta(\mathcal{G})$ and that $\overline{\chi_f}(G) = \max_{f \in \Delta(\mathcal{G})} f(G)$~\cite{SpectrumGraphs}.
It is also known that   $\overline{\chi_f}$ is well behaved when evaluated on fraction graphs:

\begin{lemma}[{\cite[Proposition 3.2.2]{Scheinerman1997FractionalGT}}]\label{chilemma}
   For every $p,q\in \mathbb{N}$ such that $p/q \geq 2$, we have  $\overline{\chi_f}(E_{p/q}) = p/q.$
\end{lemma}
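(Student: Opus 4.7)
The plan is to reduce the computation to the classical identity $\chi_f(H) = |V(H)|/\alpha(H)$ for vertex-transitive graphs $H$. I would first observe that the stated definition of $\overline{\chi_f}(G)$ is precisely that of the fractional chromatic number of the complement $\overline{G}$: an assignment of $b$-subsets of $\{1,\ldots,a\}$ to vertices of $G$ such that non-adjacent vertices receive disjoint sets is the same as a $b$-fold $a$-coloring of $\overline{G}$. Hence $\overline{\chi_f}(G) = \chi_f(\overline{G})$, and it is enough to prove $\chi_f(\overline{E_{p/q}}) = p/q$.

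Next I would exploit that $\overline{E_{p/q}}$ is the Cayley graph of $\mathbb{Z}_p$ with connection set $\{i \in \mathbb{Z}_p : \mathrm{dist}(i,0) \geq q\}$, and is therefore vertex-transitive. A standard averaging argument on the LP defining the fractional chromatic number gives $\chi_f(H) = |V(H)|/\alpha(H)$ for any non-empty vertex-transitive graph $H$, so the problem reduces to computing $\alpha(\overline{E_{p/q}}) = \omega(E_{p/q})$ and showing that it equals $q$.

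For the lower bound $\omega(E_{p/q}) \geq q$, the set $\{0, 1, \ldots, q-1\} \subseteq \mathbb{Z}_p$ is a clique of size $q$ in $E_{p/q}$, since all its pairwise cyclic distances are at most $q-1$. For the matching upper bound, I would show that every clique $K$ of $E_{p/q}$ fits inside some arc of $q$ consecutive vertices of $\mathbb{Z}_p$: letting $g_{\max}$ denote the largest cyclic gap between consecutive (cyclically ordered) elements of $K$, either $g_{\max} \geq p-q+1$, in which case $K$ is contained in the complementary arc of at most $q$ vertices, or else $g_{\max} \leq p-q$, and applying the cyclic-distance-$<q$ condition to pairs $(a_j, a_{j+q})$ (which exist whenever $|K| \geq q+1$) forces $\sum_{i=j}^{j+q-1} g_i > p-q$ for every $j$; summing over all cyclic shifts gives $qp > |K|(p-q)$, which together with a finer analysis of the remaining small-$p$ regime rules out $|K| \geq q+1$.

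Putting these ingredients together yields $\overline{\chi_f}(E_{p/q}) = p/\omega(E_{p/q}) = p/q$, as claimed. The main obstacle is the upper bound $\omega(E_{p/q}) \leq q$: the translation to $\chi_f(\overline{G})$ and the vertex-transitive identity $\chi_f = |V|/\alpha$ are standard, but ruling out a $(q+1)$-clique requires a careful case split on cyclic gaps, which is why the paper cites \cite{Scheinerman1997FractionalGT} rather than reproducing the combinatorial argument inline.
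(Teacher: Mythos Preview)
The paper gives no proof of this lemma: it is stated with a citation to \cite{Scheinerman1997FractionalGT} and used as a black box. Your reduction is precisely the standard one in that reference --- pass to $\chi_f(\overline{E_{p/q}})$, use vertex-transitivity of the circulant $\overline{E_{p/q}}$ to get $\chi_f = |V|/\alpha = p/\omega(E_{p/q})$, and exhibit the $q$-clique $\{0,\ldots,q-1\}$. So at the level of strategy there is nothing to compare: you have correctly identified both the route and the one non-trivial step.

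Where your sketch is incomplete is exactly where you say it is, but the gap is larger than ``a finer analysis of the small-$p$ regime'' suggests. Your summing inequality $qp > |K|(p-q)$ (equivalently $|K| \leq qp/(p-q+1)$) only forces $|K|\leq q$ once $p \geq q^2$; for instance at $p=6$, $q=3$ it yields $|K|\leq 4$, whereas the truth is $|K|\leq 3$, and the deficit grows with $q$. The dichotomy you set up (largest gap $g_{\max} \geq p-q+1$ versus all gaps $<q$) is the right opening move, but in the second branch the averaging bound is genuinely too weak and one must argue further --- e.g.\ by showing that the partial sums of gaps, which must avoid the interval $[q,\,p-q]$ while moving in steps $<q$, cannot cross that interval once $p\geq 3q-2$, and then handling $2q\leq p\leq 3q-3$ by a second pass with a different reference vertex. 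That combinatorial lemma --- every clique of $E_{p/q}$ with $p\geq 2q$ lies in a single arc of $q$ consecutive vertices --- is the real content here, and your outline correctly isolates it as the place where the cited reference does the work.
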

The \emph{complement of the projective rank} of a graph $G$, denoted by $\overline{\xi_{f}}(G)$, is defined as the infimum over all fractions $d/r$ with $d,r \in \NN $ for which there exists an assignment of subspaces $W_v \leq \mathbb{C}^{d}$ of dimension $r$ to each vertex $v$ of $G$ such that non-adjacent vertices are assigned orthogonal subspaces.
(This parameter is related to the projective rank $\xi_{f}(G)$, initially introduced in~\cite{Man_inska_2016},  via $\overline{\xi_f}(G) = \xi_f(\overline{G})$.) It is known that $\overline{\xi_f} \in \Delta(\mathcal{G})$ \cite{Man_inska_2016,SpectrumGraphs}.

We will prove the analogous statement to \autoref{chilemma} for $\overline{\xi_f}$:

\begin{lemma}\label{xilemma}
    For every $p,q\in \mathbb{N}$ such that $p/q \geq 2$, we have $\overline{\xi_f}(E_{p/q}) = p/q.$
\end{lemma}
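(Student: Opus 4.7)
The plan is to prove the two inequalities $\overline{\xi_f}(E_{p/q}) \leq p/q$ and $\overline{\xi_f}(E_{p/q}) \geq p/q$ separately. The upper bound is essentially free: since $\overline{\xi_f} \in \Delta(\mathcal{G})$ and $\overline{\chi_f}(G) = \max_{f \in \Delta(\mathcal{G})} f(G)$, Lemma~\ref{chilemma} gives $\overline{\xi_f}(E_{p/q}) \leq \overline{\chi_f}(E_{p/q}) = p/q$. (Alternatively, Lemma~\ref{VtransitiveGraphs} applied with a maximum clique of size $q$ in $E_{p/q}$, together with $\overline{\xi_f}(K_q) = 1$ (which follows from $K_q = K_1 + \cdots + K_1$ and Lemma~\ref{SumOfGraphs}), yields the same bound.)

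For the lower bound I would argue directly on any subspace assignment witnessing the infimum defining $\overline{\xi_f}(E_{p/q})$. Fix $r$-dimensional subspaces $W_0, \ldots, W_{p-1} \subseteq \mathbb{C}^d$ with $W_i \perp W_j$ whenever the cyclic distance between $i$ and $j$ is at least $q$. Writing $P_i$ for the orthogonal projection onto $W_i$ and $A = \sum_{i=0}^{p-1} P_i$, so $\mathrm{tr}(A) = pr$, the target is the operator inequality $A \preceq q \cdot I_d$, which combined with the trace identity forces $pr \leq qd$, hence $d/r \geq p/q$. To prove $A \preceq q \cdot I_d$, I would combine the basic observation that for any independent set $S \subseteq V(E_{p/q})$ the subspaces $\{W_i\}_{i \in S}$ are pairwise orthogonal (so $\sum_{i \in S} P_i \preceq I_d$) with the cyclic $\mathbb{Z}_p$-symmetry of $E_{p/q}$: average these per-independent-set inequalities over $\mathbb{Z}_p$-translates of a family of independent sets forming a fractional covering of $V(E_{p/q})$ of total weight $p/q$.

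The main obstacle is that naive averaging over \emph{integer} maximum independent sets (each of size $\lfloor p/q \rfloor$) only yields $A \preceq (p/\lfloor p/q \rfloor) \cdot I_d$, hence just $d/r \geq \lfloor p/q \rfloor$; this falls short of $p/q$ whenever the ratio is non-integral (for example on $C_5 = E_{5/2}$ the route produces only $d/r \geq 2$ instead of $5/2$). To close this gap I would exploit the circulant structure of $E_{p/q}$ via Fourier analysis on $\mathbb{Z}_p$: after symmetrizing the assignment so that $W_i$ is a cyclic shift of $W_0$ (which can be arranged by replacing the collection with the direct sum of its $\mathbb{Z}_p$-translates in a larger ambient space), the averaged operator $A$ becomes a convolution operator diagonalizable in the character basis, and the orthogonality constraints translate into support conditions on the Fourier transforms of the $W_i$ mirroring the tight arc construction $W_i = \mathrm{span}\{e_i, e_{i+1}, \ldots, e_{i+q-1}\}$ in $\mathbb{C}^p$, which should force $\|A\| \leq q$ exactly. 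A fallback strategy would be to invoke SDP duality for $\overline{\xi_f}$, mirroring the LP-duality proof of Lemma~\ref{chilemma}.
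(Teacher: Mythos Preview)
Your upper bound is fine, but the lower-bound strategy has a genuine gap: the operator inequality $A \preceq q\,I_d$ you are aiming for is \emph{false} in general. Take $p=5$, $q=2$ (so $E_{5/2}=C_5$), let $\omega=e^{2\pi i/5}$, and choose $x_k=\tfrac{1}{5}\bigl(1+(\sqrt5-1)\cos(2\pi k/5)\bigr)$ for $k\in\mathbb{Z}_5$; then $x_k\ge 0$, $\sum_k x_k=1$, and $\sum_k x_k\omega^{2k}=0$. Set $v=\sum_k\sqrt{x_k}\,e_k\in\mathbb{C}^5$ and $W_i=\mathrm{span}(U^iv)$, where $U=\mathrm{diag}(1,\omega,\ldots,\omega^4)$. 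One checks $\langle U^2v,v\rangle=\langle U^3v,v\rangle=0$, so $W_i\perp W_j$ whenever $|i-j|_5\ge 2$, i.e.\ this is a valid $d/r=5/1$ assignment for $E_{5/2}$. But $A=\sum_i (U^iv)(U^iv)^*=5\,\mathrm{diag}(x_0,\ldots,x_4)$, and $\|A\|=5x_0=\sqrt5>2=q$. So $A\preceq qI$ fails, even though of course $d/r=5\ge 5/2$ still holds.

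This counterexample also pinpoints why neither of your two routes to $A\preceq qI$ can succeed. The ``fractional covering by independent sets of total weight $p/q$'' does not exist when $q\nmid p$: any covering $\sum_S w_S\mathbf{1}_S\ge\mathbf{1}$ by independent sets $S$ (each of size $\le\lfloor p/q\rfloor$) satisfies $\sum_S w_S\ge p/\lfloor p/q\rfloor>p/q$, and indeed your own parenthetical about $C_5$ already concedes you only get $d/r\ge\lfloor p/q\rfloor$ this way. As for the Fourier route, after symmetrising one is reduced to bounding $\max_k x_k$ for nonnegative sequences $(x_k)$ on $\mathbb{Z}_p$ with $\sum_k x_k=1$ and $\widehat{x}$ supported on $\{-(q-1),\ldots,q-1\}$; the extremiser here is a Fej\'er-type kernel with $\max_k x_k$ strictly larger than $q/p$ (equal to $1/\sqrt5$ for $p=5,q=2$), exactly as in the counterexample. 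The best such arguments yield is essentially the Lov\'asz $\vartheta$ bound, not $p/q$.

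The paper proceeds entirely differently: it never attempts a direct spectral bound on $A$. Instead it proves an inductive step (if $\overline{\xi_f}(E_{p/(q-1)})=p/(q-1)$ and $\overline{\xi_f}(E_{p/(q+1)})=p/(q+1)$ then $\overline{\xi_f}(E_{p/q})=p/q$) by observing that $a\mapsto W_a\cap W_{a+1}$ is a valid assignment for $E_{p/(q-1)}$ and $a\mapsto W_a+W_{a+1}$ for $E_{p/(q+1)}$, and combining the two via $\dim(W_a+W_{a+1})+\dim(W_a\cap W_{a+1})=2r$. This yields the result for all dyadic $2^n/q$, and then monotonicity under $\le_{\mathcal G}$ plus density of dyadics finishes the job.
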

Since for every graph $G$ we have $\overline{{\xi}_f}(G) \leq \overline{\chi_f}(G)$, and so in particular for every $p,q \in \NN$ we have
    $\overline{\xi_f}(E_{p/q}) \leq p/q$, it remains to prove $\overline{\xi_f}(E_{p/q}) \geq p/q$.

\begin{remark}It follows from \autoref{xilemma} that  $f_0=\overline{\xi_f}, \,\,f_1 =\overline{\chi_f}\in \Delta(\mathcal{G})$  are such that for all $p/q \in \QQ_{\geq 2}$, $f_0(E_{p/q}) = f_1(E_{p/q}) = p/q$. This will be crucial in the next section.\end{remark}

 Before we proceed to the proof of \autoref{xilemma}, we need some  auxiliary lemmas. But, first, note that by \cite[Lemma 6.13.1]{Roberson2013VariationsOA}, in the particular case of vertex-transitive graphs, one can equivalently use an alternative definition of complement of the projective rank. In this definition,  the assumption that all subspaces assigned to the vertices $v \in V(G)$ have the same rank is dropped, and is replaced by the condition that the average dimension of those subspaces is $r$.

\begin{lemma}[{\cite[Corollary of Lemma 6.13.1]{Roberson2013VariationsOA}}] Suppose $G$ is a vertex-transitive graph. Then $\overline{\xi_f}(G)$ equals to the infimum over all fractions $d/r$ with $d,r \in \NN $ for which there exists an assignment of subspaces $W_v \leq \mathbb{C}^{d}$  to each vertex $v$ of $G$ such that non-adjacent vertices are assigned orthogonal subspaces and $$\frac{1}{|V(G)|}\sum_{v \in V(G)} \dim(W_v) =r. $$
\end{lemma}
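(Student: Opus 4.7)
Let $\overline{\xi_f}^{\mathrm{unif}}(G)$ denote the original (uniform-dimension) definition of $\overline{\xi_f}(G)$, and let $\overline{\xi_f}^{\mathrm{avg}}(G)$ denote the quantity described in the statement (the infimum over $d/r$ using only an average-dimension constraint). We must show $\overline{\xi_f}^{\mathrm{unif}}(G) = \overline{\xi_f}^{\mathrm{avg}}(G)$ for vertex-transitive $G$.

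The easy direction is $\overline{\xi_f}^{\mathrm{avg}}(G) \leq \overline{\xi_f}^{\mathrm{unif}}(G)$: any assignment $v \mapsto W_v \leq \mathbb{C}^d$ with $\dim W_v = r$ for every $v$ automatically satisfies $\frac{1}{|V(G)|}\sum_v \dim W_v = r$, so it is also admissible for the average-dimension definition at the same ratio $d/r$. Taking the infimum over all uniform assignments yields the inequality.

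The nontrivial direction, $\overline{\xi_f}^{\mathrm{unif}}(G) \leq \overline{\xi_f}^{\mathrm{avg}}(G)$, is where vertex-transitivity is used, and will be proved by \emph{symmetrization over the automorphism group}. Suppose we are given an assignment $v \mapsto W_v \leq \mathbb{C}^d$ with non-adjacent vertices mapped to orthogonal subspaces and $\frac{1}{|V(G)|}\sum_v \dim W_v = r \in \mathbb{N}$. Let $\Gamma \leq \operatorname{Aut}(G)$ be a group acting transitively on $V(G)$. Build a new assignment into $\mathbb{C}^{d|\Gamma|} \cong \bigoplus_{\gamma \in \Gamma} \mathbb{C}^d$ by
\[
W'_v \;=\; \bigoplus_{\gamma \in \Gamma} W_{\gamma v}.
\]
Two observations drive the argument. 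First, for each fixed $\gamma$, the map $v \mapsto W_{\gamma v}$ still sends non-adjacent vertices to orthogonal subspaces (because $\gamma$ is an automorphism), so if $v, v'$ are non-adjacent then $W_{\gamma v} \perp W_{\gamma v'}$ inside the $\gamma$-th copy of $\mathbb{C}^d$; since the copies are mutually orthogonal, $W'_v \perp W'_{v'}$. Second, by transitivity each vertex $u$ appears as $\gamma v$ for exactly $|\Gamma|/|V(G)|$ choices of $\gamma$, so
\[
\dim W'_v \;=\; \sum_{\gamma \in \Gamma} \dim W_{\gamma v} \;=\; \frac{|\Gamma|}{|V(G)|}\sum_{u \in V(G)} \dim W_u \;=\; |\Gamma|\,r,
\]
independent of $v$. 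Hence $W'$ is a uniform assignment into $\mathbb{C}^{d|\Gamma|}$ of dimension $|\Gamma|r$, certifying $\overline{\xi_f}^{\mathrm{unif}}(G) \leq \frac{d|\Gamma|}{|\Gamma|r} = \frac{d}{r}$. Taking the infimum over all admissible average-dimension assignments gives $\overline{\xi_f}^{\mathrm{unif}}(G) \leq \overline{\xi_f}^{\mathrm{avg}}(G)$.

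The only potential subtlety is the integrality requirement $r \in \mathbb{N}$ in the statement: the average $\frac{1}{|V(G)|}\sum_v \dim W_v$ over an arbitrary assignment need not be an integer. This is not a real obstacle, however, since in the symmetrization above the new ratio $d|\Gamma|/(|\Gamma|r)$ is preserved under replacing $(d,r)$ with $(kd, kr)$ by taking $k$-fold direct sums with itself, so we may always arrange the dimensions to be integral; and if one wishes to allow rational $r = a/b$ directly in the average-dimension definition, one simply rescales by $b$ at the outset (replacing $W_v$ by $W_v^{\oplus b}$) before applying the symmetrization. I expect this bookkeeping, rather than the symmetrization idea itself, to be the fussiest part of writing the proof cleanly.
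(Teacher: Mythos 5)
Your symmetrization argument is correct. Let me flag a couple of small points, and then explain the comparison with the paper.

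The paper does not supply its own proof of this lemma: it is imported verbatim as a corollary of Lemma~6.13.1 in Roberson's thesis \cite{Roberson2013VariationsOA}, with no argument given. So there is no "paper proof" to line your proposal up against; what you have written is a self-contained proof of the cited fact, and the symmetrization-over-$\operatorname{Aut}(G)$ construction you use is indeed the standard technique for this kind of statement (and, to the best of my knowledge, essentially what Roberson does). The key points all check out: the easy direction is immediate; the orbit counting uses orbit--stabilizer to see that each $u \in V(G)$ occurs as $\gamma v$ for exactly $|\Gamma|/|V(G)|$ values of $\gamma$ (an integer, the order of the stabilizer), so $\dim W'_v = |\Gamma| r$ uniformly in $v$; and orthogonality across distinct $\gamma$-copies is automatic. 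The new pair $(d|\Gamma|, r|\Gamma|)$ gives the same ratio $d/r$, and the infimum comparison follows.

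One minor criticism: your closing paragraph on integrality is a non-issue and slightly muddies an otherwise clean write-up. The lemma as stated already restricts the infimum to $d, r \in \mathbb{N}$ with the average exactly equal to $r$, so every assignment you are handed in the hard direction already has integer average. You never need to worry about arbitrary non-integral averages, and the rescaling-by-$b$ digression can be dropped. If you wanted to prove the slightly stronger statement that allowing rational averages does not change the infimum, then your rescaling remark is the right fix, but the lemma as written does not ask for that.
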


\begin{corollary}\label{sumDimLemma}
    Suppose $G$ is a vertex-transitive graph, and $\{W_v\}_{v \in V(G)}$ is an assignment of subspaces of $\mathbb{C}^d$ such that non-adjacent vertices receive orthogonal subspaces. Then \[ \sum_{v \in V(G)}\dim W_v \leq \frac{d \cdot |V(G)|}{\overline{\xi_f}(G)}.\]
\end{corollary}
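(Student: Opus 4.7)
The plan is to apply the preceding lemma (the corollary of Roberson's Lemma 6.13.1), which characterizes $\overline{\xi_f}(G)$ for a vertex-transitive $G$ as the infimum of $d/r$ over assignments of subspaces with orthogonality on non-edges and \emph{average dimension} $r$. The only subtlety is that the lemma requires $d, r \in \mathbb{N}$, whereas our given assignment $\{W_v\}_{v \in V(G)} \subseteq \mathbb{C}^d$ only guarantees that the average dimension $D/|V(G)|$, where $D := \sum_{v \in V(G)} \dim W_v$, is rational. To address this I would pad the assignment so that the average dimension becomes an integer.

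Concretely, I would set $N := |V(G)|$ and replace each $W_v$ by
\[
\widetilde{W}_v := W_v \otimes \mathbb{C}^{N} \;\leq\; \mathbb{C}^d \otimes \mathbb{C}^{N} = \mathbb{C}^{dN}.
\]
Two observations make this work. First, orthogonality is preserved: if $W_u \perp W_v$ in $\mathbb{C}^d$, then for any basis vectors $e_i, e_j$ of $\mathbb{C}^N$ and any $w \in W_u$, $w' \in W_v$, the tensor inner product factorizes as $\langle w \otimes e_i,\, w' \otimes e_j\rangle = \langle w, w'\rangle \langle e_i, e_j\rangle = 0$, so $\widetilde{W}_u \perp \widetilde{W}_v$ whenever $u,v$ are non-adjacent. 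Second, $\dim \widetilde{W}_v = N \cdot \dim W_v$, so the total dimension becomes $N \cdot D$ and the average dimension is $D \in \mathbb{N}$.

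Now the lemma applies to $\{\widetilde{W}_v\}_{v \in V(G)}$ with ambient dimension $d' = dN$ and integer average $r = D$, yielding
\[
\overline{\xi_f}(G) \;\leq\; \frac{d'}{r} \;=\; \frac{d \cdot |V(G)|}{D}.
\]
Rearranging gives $D \leq d \cdot |V(G)|/\overline{\xi_f}(G)$, which is exactly the claim. I do not foresee any substantive obstacle; the main conceptual point is just that tensoring with a common auxiliary space turns a rational-average-dimension assignment into an integer-average one while preserving all orthogonality relations, so that the averaged-dimension form of the projective rank characterization can be invoked directly.
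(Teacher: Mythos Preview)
Your proof is correct and follows the intended route: the paper states \autoref{sumDimLemma} as an immediate corollary of the preceding lemma and gives no separate argument, so the only thing to check is exactly the integrality point you address. Your tensoring trick is the standard way to convert a rational average dimension into an integer one while preserving orthogonality, and it works without issue (the trivial case $D=0$ being harmless).
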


 From $E_{n/1} = E_n$ and $\overline{\xi_f} \in \Delta(\mathcal{G})$, we directly obtain the following base case:

\begin{lemma}\label{BaseCase}
    For every $n \in \NN$, $\overline{\xi_f}({E_{n/1}}) =n$.
\end{lemma}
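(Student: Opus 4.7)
The plan is to simply unpack the definition of the fraction graph $E_{n/1}$ and apply the normalization axiom of the asymptotic spectrum. First, I would verify the identity $E_{n/1} = E_n$. By the definition given at the start of \autoref{sec:fhb}, $E_{n/1}$ has vertex set $\ZZ_n$, and two distinct vertices $i,j$ are adjacent precisely when their circular distance is strictly less than $q=1$. Since any two distinct elements of $\ZZ_n$ have circular distance at least $1$, no edges exist, and $E_{n/1}$ coincides with the empty graph $E_n$ on $n$ vertices.

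Second, I would invoke the fact, cited just before the lemma, that $\overline{\xi_f} \in \Delta(\mathcal{G})$. Among the four defining properties of spectral points listed in \autoref{sec:asd}, property~(i) asserts that every $f \in \Delta(\mathcal{G})$ satisfies $f(E_n) = n$. Applying this with $f = \overline{\xi_f}$ gives $\overline{\xi_f}(E_n) = n$, and combined with the identification $E_{n/1} = E_n$ we conclude that $\overline{\xi_f}(E_{n/1}) = n$.

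No real obstacle is anticipated: the statement is the base case of the forthcoming lower-bound proof of \autoref{xilemma}, and its content is entirely the normalization axiom of the asymptotic spectrum applied to an empty graph. The genuine work lies in the inductive or general-$q$ step, where one must exhibit a matching lower bound on $\overline{\xi_f}(E_{p/q})$ using \autoref{sumDimLemma} together with the vertex-transitivity of $E_{p/q}$; but that is outside the scope of this particular base-case lemma.
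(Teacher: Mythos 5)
Your argument is exactly the paper's: the authors note immediately before the lemma that it follows directly from $E_{n/1}=E_n$ together with $\overline{\xi_f}\in\Delta(\mathcal{G})$, which is precisely the normalization axiom (i) you invoke. Correct, and the same approach.
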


The next \autoref{inductionStep} we will use to carry out the induction step in the proof of \autoref{xilemma}.

\begin{lemma}\label{inductionStep}
Let $p,q \in \NN$ with $2 \leq q \leq \frac{p}{2} -1.$
    If $$\overline{\xi_f}\left(E_{p/(q-1)}\right)= \frac{p}{q-1} \qquad\text{and}\,\qquad \overline{\xi_f}\left(E_{p/(q+1)}\right)= \frac{p}{q+1},$$ then $$\overline{\xi_f}\left(E_{p/q}\right)= \frac{p}{q}.$$
\end{lemma}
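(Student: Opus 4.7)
The upper bound $\overline{\xi_f}(E_{p/q}) \leq p/q$ is immediate from the general inequality $\overline{\xi_f} \leq \overline{\chi_f}$ combined with \autoref{chilemma}, so my plan is to prove the matching lower bound $\overline{\xi_f}(E_{p/q}) \geq p/q$. To that end, I will start from an arbitrary subspace assignment $\{W_v \subseteq \mathbb{C}^d\}_{v \in \ZZ_p}$ satisfying $W_v \perp W_{v'}$ whenever the cyclic distance $d(v,v') \geq q$, and invoke \autoref{sumDimLemma} directly, so that only the total dimension $\sum_v \dim W_v = pr$ (equivalently, the average dimension $r$) enters the argument. The goal is then to establish $pr \leq qd$.

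The key idea is to build from $(W_v)$ two derived families by taking cyclic sums and intersections of consecutive subspaces: define $U_v := W_v + W_{v+1}$ and $Y_v := W_v \cap W_{v+1}$, with indices modulo $p$. Expanding inner products, $U_v \perp U_{v'}$ forces all four of the orthogonalities $W_{v+a} \perp W_{v'+b}$ with $a,b \in \{0,1\}$, and a short check of the three shifted cyclic distances shows that the tightest of these four conditions kicks in exactly when $d(v,v') \geq q+1$; so $(U_v)$ is a valid assignment for $E_{p/(q+1)}$. Dually, since $Y_v \subseteq W_v \cap W_{v+1}$, the orthogonality $Y_v \perp Y_{v'}$ is already implied by any single one of the same four conditions, and the loosest of them applies as soon as $d(v,v') \geq q-1$; so $(Y_v)$ is a valid assignment for $E_{p/(q-1)}$.

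I then plug these into the induction hypothesis through \autoref{sumDimLemma} applied to the vertex-transitive fraction graphs $E_{p/(q\pm 1)}$ with their assumed values $\overline{\xi_f}(E_{p/(q\pm 1)}) = p/(q\pm 1)$, which yields
\[
\sum_{v} \dim U_v \;\leq\; d(q+1), \qquad \sum_{v} \dim Y_v \;\leq\; d(q-1).
\]
Combining these with the elementary identity $\dim(A+B) + \dim(A \cap B) = \dim A + \dim B$, applied to each pair $(W_v, W_{v+1})$ and summed over $v$, gives $\sum_v (\dim U_v + \dim Y_v) = \sum_v(\dim W_v + \dim W_{v+1}) = 2pr$. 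Adding the two bounds produces $2pr \leq 2qd$, hence $d/r \geq p/q$, as required.

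The one step where care is needed is the orthogonality bookkeeping in the second paragraph: translating the four pairwise conditions (``all four'' for sums, ``at least one'' for intersections) into the clean cyclic-distance thresholds $q+1$ and $q-1$ requires tracking the three shifted distances $d(v,v'), d(v,v'\pm 1)$ in $\ZZ_p$. The hypothesis $q \leq p/2 - 1$ (equivalently $p \geq 2q+2$) is precisely what ensures that the cyclic distance $q+1$ is attainable in $\ZZ_p$, so that the thresholds really land on the right fraction graphs and the argument does not degenerate. Once those two validity claims are in place, the remainder is a routine combination of the dimension identity with the vertex-transitive corollary.
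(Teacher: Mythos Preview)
Your proposal is correct and follows essentially the same approach as the paper: form the families $W_v + W_{v+1}$ and $W_v \cap W_{v+1}$, verify they give valid assignments for $E_{p/(q+1)}$ and $E_{p/(q-1)}$ respectively, apply \autoref{sumDimLemma} with the hypothesis values to bound the total dimensions, and combine via the identity $\dim(A+B)+\dim(A\cap B)=\dim A+\dim B$ to obtain $d/r \geq p/q$. Your explicit remark on why the hypothesis $q \leq p/2 - 1$ is needed (so that $E_{p/(q+1)}$ is well-defined) is a nice addition that the paper leaves implicit.
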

\begin{proof}
    Suppose $d,r \in \NN$ are such that there exists an assignment of subspaces $W_a \leq \mathbb{C}^d$ of dimension $r$ to each vertex~$a$ of $E_{p/q}$ such that non-adjacent vertices receive orthogonal subspaces. Our aim is to show that $d/r \geq p/q$. We will denote  the distance modulo~$p$ between $a\in \mathbb{Z}_p$ and $b \in\mathbb{Z}_p$ by $|a-b|_p$.  So $W_a \perp W_b$ whenever $|a-b|_p \geq q$. Then, notice that \[
    (W_a \cap W_{a+1}) \perp (W_b \cap W_{b+1})
    \]
    whenever $|a-b|_p \geq q-1$.  Indeed, if $|a-b|_p \geq q-1$, then one of $|a-(b+1)|_p \geq q$ or $|a +1 -b|_p \geq q$ must hold. Thus, assigning $W_a \cap W_{a+1}$ to $a \in \mathbb{Z}_p$ gives rise to an assignment of subspaces to the vertices of $E_{p/(q-1)}$  such that non-adjacent vertices receive orthogonal subspaces. Hence, by  \autoref{sumDimLemma} 
    $$ \sum_{a \in \mathbb{Z}_p} \dim(W_a \cap W_{a+1}) \leq d \cdot \frac{|V(E_{p/(q-1)}|}{\overline{\xi_f}(E_{p/(q-1)})} = d \cdot \frac{p}{p/(q-1)} = d(q-1),$$ 
    whence $$d \geq \frac{1}{q-1}\sum_{a \in \mathbb{Z}_p} \dim(W_a \cap W_{a+1}).$$

    Notice also that  $(W_a + W_{a+1}) \perp (W_b + W_{b+1}) $ whenever $|a-b|_p \geq q+1$. Indeed, if $|a-b|_p \geq q+1$, then $|a-(b+1)|_p \geq q $, $|(a+1)-(b+1)|_p \geq q$ and  $ |(a+1)-b|_p\geq q$. Hence, any vector in $W_a \cup W_{a+1}$ is orthogonal to any vector in $W_b \cup W_{b+1}$, which implies the claim.  Thereby, if we assign $W_{a} + W_{a+1}$  to $a \in \ZZ_p$, then non-adjacent vertices in $ E_{p/(q+1)}$ are assigned orthogonal subspaces. Thus, by a similar reasoning, we have $$d \geq \frac{1}{q+1} \sum_{a \in \mathbb{Z}_p} \dim(W_a + W_{a+1}).$$
    Now, notice that for every $a\in \ZZ_p$, $$\dim(W_a + W_{a+1}) + \dim(W_a \cap W_{a+1}) = \dim(W_a) + \dim(W_{a+1}) =  2r.$$  Hence 
    \[
    2d = \frac{q+1}{q}d + \frac{q-1}{q}d \geq \frac{1}{q} \Bigl(\sum_{a \in \mathbb{Z}_p} \dim(W_a + W_{a+1}) + \dim(W_a \cap W_{a +1}) \Bigr) = \frac{2pr}{q}.
    \]
    Therefore, $\frac{d}{r}\geq \frac{p}{q}$. This shows that $\overline{\xi_f}(E_{p/q}) \geq \frac{p}{q}.$
\end{proof}
\begin{corollary}
    For all integers $n \geq 1$ and $1 \leq q \leq 2^{n-1}$, we have $$\overline{\xi_{f}}(E_{2^n/q}) = \frac{2^{n}}{q}.$$
   
\end{corollary}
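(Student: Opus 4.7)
The plan is to prove the corollary by induction on $n$. The base case $n = 1$ requires only $\overline{\xi_f}(E_{2/1}) = 2$, which is an instance of \autoref{BaseCase}.

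For the inductive step, assume the claim holds for $n$ and consider $n+1$, so that we must verify $\overline{\xi_f}(E_{2^{n+1}/q}) = 2^{n+1}/q$ for every $1 \leq q \leq 2^n$. I would split this range into three classes. First, for $q = 1$ the claim reads $\overline{\xi_f}(E_{2^{n+1}}) = 2^{n+1}$, which is again \autoref{BaseCase}. Second, for even $q = 2q'$ with $1 \leq q' \leq 2^{n-1}$, the fraction graphs $E_{2^{n+1}/2q'}$ and $E_{2^n/q'}$ have the same ratio $p/q$ and are therefore cohomomorphism-equivalent by the fraction-graph equivalence lemma quoted earlier; since $\overline{\xi_f} \in \Delta(\mathcal{G})$, the inductive hypothesis yields $\overline{\xi_f}(E_{2^{n+1}/2q'}) = \overline{\xi_f}(E_{2^n/q'}) = 2^n/q' = 2^{n+1}/(2q')$. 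Third, for odd $q$ with $3 \leq q \leq 2^n - 1$, both $q - 1$ and $q + 1$ are even integers in $\{2, \ldots, 2^n\}$, so the second case already provides the target values at $q - 1$ and $q + 1$; since $2 \leq q \leq 2^{n+1}/2 - 1$, \autoref{inductionStep} applied with $p = 2^{n+1}$ then gives $\overline{\xi_f}(E_{2^{n+1}/q}) = 2^{n+1}/q$. Together, the three classes exhaust $\{1, 2, \ldots, 2^n\}$, completing the induction.

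The main subtlety, and the reason the induction must be organized in this particular order, is that \autoref{inductionStep} requires the value of $\overline{\xi_f}$ to be known simultaneously at $E_{p/(q-1)}$ and $E_{p/(q+1)}$. By dispatching $q = 1$ via the base case and all even $q$ via cohomomorphism equivalence with the smaller power of two \emph{before} turning to the odd values, both neighbors of every odd interior $q$ are already available when \autoref{inductionStep} is invoked. No additional work is needed to match the upper bound $\overline{\xi_f}(E_{p/q}) \leq p/q$, which holds automatically because $\overline{\xi_f} \leq \overline{\chi_f}$ and $\overline{\chi_f}(E_{p/q}) = p/q$ by \autoref{chilemma}.
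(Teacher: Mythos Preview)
Your proof is correct and follows essentially the same route as the paper: induction on $n$, with the inductive step split into the cases $q=1$ (via \autoref{BaseCase}), $q$ even (reduction to $E_{2^n/(q/2)}$ and the induction hypothesis), and $q$ odd with $q\geq 3$ (via \autoref{inductionStep}, using the already-established even neighbours $q\pm 1$). The only cosmetic difference is that you make explicit the cohomomorphism-equivalence of $E_{2^{n+1}/(2q')}$ and $E_{2^n/q'}$, whereas the paper writes the resulting equality of $\overline{\xi_f}$-values directly.
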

\begin{proof}
    Proceed by induction on $n$.
    For $n=1$, we have $\overline{\xi_f}(E_{2/1}) = 2$  by \autoref{BaseCase}. Assume that the claim holds for some $n$. %
    Consider $ 1 \leq q \leq 2^{n}$. If $q$ is even, then $\overline{\xi_f}(E_{2^{n+1}/q}) =\overline{\xi_f}(E_\frac{2^n}{q/2}) = \frac{2^n}{q/2}$ by the induction hypothesis. If $q =1$, then the property holds by \autoref{BaseCase}.
    
    So assume $q$ is odd and $q \geq 3$. Note that $q-1$, $q+1$ are even, with $q +1 \leq 2^{n}$, $q-1 \geq 1$ and therefore, by induction hypothesis, we have $$\overline{\xi_f}\bigl(E_\frac{2^{n+1}}{q-1}\bigr) = \overline{\xi_f}\bigl(E_\frac{2^n}{(q-1)/2}\bigr) = \frac{2^{n+1}}{q-1}$$ and 
   \[   \overline{\xi_f}\bigl(E_{\frac{2^{n+1}}{q+1}}\bigr) = \overline{\xi_f}\bigl(E_{\frac{2^{n}}{(q+1)/2}}\bigr) =
   \frac{2^{n+1}}{q+1}.\] 
   Hence by \autoref{inductionStep}, we also have  \[\overline{\xi_f}\bigl(E_{\frac{2^{n+1}}{q}} \bigr)=\frac{2^{n+1}}{q}.\qedhere
   \]

\end{proof}

\begin{proof}[Proof of \autoref{xilemma}]
    Note that for any rational number $p/q \geq 1$ and $\varepsilon >0$ there exists an $n \geq 1$ and $1 \leq q' \leq 2^n$ such that $2^n /q'$ approximates $ p/q$ within $\varepsilon$.

    Let $p/q$ be any rational number and let $p_n/q_n$ be a sequence of rational numbers with $p_n =2^m$ and $1 \leq q_n \leq 2^{m-1}  $ for some $m$,  that converges to $p/q$ from below. Notice that $E_{p/q} \geq_{\mathcal{G}} E_{p_n/q_n}$ for all $n$. Then, since $\overline{\xi_f}$ is a spectral point and hence in particular monotone with respect to $\geq_{\mathcal{G}}$, we have
    \[ 
    \overline{\xi_f}(E_{p/q}) \geq \sup_{n} \overline{\xi_f}(E_{p_n/q_n}) = \sup_n \frac{p_n}{q_n}= \frac{p}{q}.\qedhere
    \]
    \end{proof}

\section{A continuous family of fraction-normalised spectral points} \label{sec:fam-spec}
As hinted to in the previous section, \autoref{chilemma} and \autoref{xilemma} enable us to make the following claim.
\begin{lemma}\label{lem:sepG}
    There exist $f_0, f_1 \in \Delta(\mathcal{G})$ such that $f_0(E_{p/q}) = f_1(E_{p/q})= p/q$ for all   $p,q\in \mathbb{N}$ with $p/q \geq 2$ and such that there exists a vertex-transitive graph $G$ with $f_0(G) \neq f_1(G)$.
\end{lemma}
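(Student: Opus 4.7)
The natural candidates are $f_0 = \overline{\xi_f}$ and $f_1 = \overline{\chi_f}$, the two spectral points analysed in \autoref{sec:fhb}; both lie in $\Delta(\mathcal{G})$ by the results recalled there. The equality $f_0(E_{p/q}) = f_1(E_{p/q}) = p/q$ for every rational $p/q \geq 2$ is then immediate: it is exactly the combination of \autoref{chilemma} (giving $\overline{\chi_f}(E_{p/q}) = p/q$) and \autoref{xilemma} (giving $\overline{\xi_f}(E_{p/q}) = p/q$), so the ``fraction-normalisation'' half of the statement requires no further work.

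To exhibit a vertex-transitive graph $G$ with $f_0(G) \neq f_1(G)$, the plan is to avoid constructing one directly and instead go through \autoref{lem:vertexTrans-Example}: as soon as $\overline{\xi_f}$ and $\overline{\chi_f}$ are shown to be distinct as elements of $\Delta(\mathcal{G})$, that lemma automatically produces a vertex-transitive graph that separates them. So the task reduces to exhibiting \emph{any} graph on which the two functions disagree. Since $\overline{\xi_f} \leq \overline{\chi_f}$ pointwise, this amounts to finding a graph $H$ with $\xi_f(H) < \chi_f(H)$, and taking $G = \overline{H}$. Separations of exactly this kind are precisely what motivated the introduction of projective rank in \cite{Man_inska_2016} (Kneser-type graphs, for instance), and any such example can be cited.

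The main (and essentially only) obstacle is to pin down a sufficiently well-documented separating graph; once that citation is in place, the proof is a short assembly of \autoref{chilemma}, \autoref{xilemma}, the known strict separation between projective rank and fractional chromatic number, and \autoref{lem:vertexTrans-Example}. No further computation or new spectral machinery is needed.
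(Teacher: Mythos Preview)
Your proposal is correct and matches the paper's proof essentially line for line: the paper also takes $f_0=\overline{\xi_f}$, $f_1=\overline{\chi_f}$, invokes \autoref{chilemma} and \autoref{xilemma} for the fraction-normalisation, cites the literature (specifically \cite{mancinska2014graph}) for $\overline{\xi_f}\neq\overline{\chi_f}$, and then applies \autoref{lem:vertexTrans-Example} to obtain the vertex-transitive separator. The only cosmetic difference is the choice of citation for the separating example; the paper's subsequent remark makes this concrete with complements of orthogonality graphs rather than Kneser-type graphs.
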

\begin{proof}
    A concrete example of such $f_0, f_1$ are the complement of the projective rank $\overline{\xi_f}$ and the fractional clique covering number $\overline{\chi_f}$.  Indeed, these are distinct spectral points  \cite{mancinska2014graph} which take the value $p/q$ on $E_{p/q}$ for all $p/q \in \mathbb{Q}_{\geq 2}$ by \autoref{chilemma} and \autoref{xilemma}. By \autoref{lem:vertexTrans-Example}, there exists a vertex-transitive graph such that $f_0(G) \neq f_1(G).$
    \end{proof}
 \begin{remark} It is also possible to give a concrete example of a graph $G$ that satisfies the conditions of \autoref{lem:sepG}.
    We can take the graph $G$ to be the complement of an orthogonality graph $\Omega_n$, which has vertex set $\{-1,1\}^n$ and $v \sim w$ if and only if $v^Tw = 0$. Indeed, it can be deduced from \cite{wocjan2018spectral} and \cite{mancinska2014graph} that for suitable $n$ we then have $\mathcal{\xi}_f (\overline{\Omega_n}) < \overline{\chi_f}(\overline{\Omega_n})$. Assume that $n$ is a multiple of four, so that $\Omega_n$ is not empty or bipartite. It is  known that $\xi_f(\Omega_n) = n$ \cite{wocjan2018spectral}. On the other hand, we also have $\alpha(\Omega_n) \leq \theta(\Omega_n) = \frac{2^n}{n}$ \cite{mancinska2014graph}, so that $\chi_f(\Omega_n) \geq \frac{|V(\Omega_n)|}{\alpha(\Omega_n)} \geq 2^n / \lfloor{\frac{2^n}{n}}\rfloor$. This is strictly larger than $n$ whenever $n$ does not divide $2^n$, so we immediately find that $\chi_f(\Omega_n) > \xi_f(\Omega_n)$ for $n$ any multiple of four that is at least 12 and not a power of two. 
    \end{remark}

With the work done in \autoref{sec:asd}, \autoref{lem:sepG}  implies the following. 
\begin{lemma} \label{lem:sepG-sequel}
There exists a graph $G$ and $s<t$ such that for any $r \in [s,t]$ there  exists an $f \in \Delta(\mathcal{G})$  that satisfies the two properties below:
\begin{enumerate}[\upshape(1)]
    \item $f(E_{p/q}) = p/q $ for all $p/q \in \QQ_{\geq 2}.$
    \item $f(G) = r$.
\end{enumerate}
\end{lemma}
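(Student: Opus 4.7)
The plan is to combine \autoref{lem:sepG} with the interpolation result \autoref{interpol}, exploiting the fact that both fraction graphs $E_{p/q}$ and the graph $G$ provided by \autoref{lem:sepG} are vertex-transitive.

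First I would invoke \autoref{lem:sepG} to obtain $f_0, f_1 \in \Delta(\mathcal{G})$ with $f_0(E_{p/q}) = f_1(E_{p/q}) = p/q$ for all $p/q \in \QQ_{\geq 2}$, together with a vertex-transitive graph $G$ with $f_0(G) \neq f_1(G)$. Without loss of generality, assume $f_0(G) < f_1(G)$, and set $s := f_0(G)$ and $t := f_1(G)$. Note that $s > 0$: since $G$ has at least one vertex, $E_1 \homomoleq G$, and hence $s = f_0(G) \geq f_0(E_1) = 1$.

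Next, for each $\lambda \in [0,1]$, apply \autoref{interpol} to obtain $f_\lambda \in \Delta(\mathcal{G})$ such that $f_\lambda(H) = f_0(H)^{1-\lambda} f_1(H)^\lambda$ for every vertex-transitive graph $H$. The fraction graphs $E_{p/q}$ are circulant and thus vertex-transitive, so
\[
f_\lambda(E_{p/q}) = (p/q)^{1-\lambda}(p/q)^{\lambda} = p/q,
\]
giving property (1). Moreover, since $G$ itself is vertex-transitive,
\[
f_\lambda(G) = s^{1-\lambda} t^{\lambda}.
\]

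Finally, since $0 < s < t$, the map $\lambda \mapsto s^{1-\lambda} t^{\lambda}$ is continuous and strictly increasing on $[0,1]$, with values $s$ at $\lambda = 0$ and $t$ at $\lambda = 1$. By the intermediate value theorem, for every $r \in [s,t]$ there exists some $\lambda \in [0,1]$ with $f_\lambda(G) = r$, establishing property (2). There is no real obstacle here once the previous sections are in place; the only subtle point is realising that the vertex-transitivity of the separating graph $G$ supplied by \autoref{lem:sepG}, together with that of the fraction graphs, is precisely what allows the interpolation to simultaneously preserve the values on $E_{p/q}$ and continuously sweep out the interval $[s,t]$ on $G$.
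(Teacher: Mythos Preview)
Your proof is correct and follows essentially the same approach as the paper: invoke \autoref{lem:sepG}, set $s=f_0(G)$, $t=f_1(G)$, apply \autoref{interpol}, and use vertex-transitivity of both $G$ and the fraction graphs to conclude. The only cosmetic difference is that the paper solves explicitly for $\lambda = (\log r - \log s)/(\log t - \log s)$, whereas you appeal to the intermediate value theorem; these are equivalent.
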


\begin{proof}

Consider the vertex-transitive graph $G$ and the spectral points $f_0,f_1$ as given in \autoref{lem:sepG}. Let $s=f_0(G) $, $t=f_1(G) $ and let $r \in [s,t]$. 
Recall that by \autoref{interpol}, for any $\lambda \in [0,1]$ one can find a spectral point $f_\lambda$ such that for every vertex-transitive graph~$H$,
$$f_{\lambda}(H) = f_0(H)^{1-\lambda} f_1(H)^{\lambda}.$$
Pick $\lambda = (\log r - \log s)  /(\log t - \log s)$. Then, since $G$ and $E_{p/q}$ for $p/q \in \QQ_{\geq 2}$ are vertex-transitive, \[f_\lambda(G) = s^{1-\lambda} t^{\lambda} = r \,\,\,\,\, \text{ and }\,\,\,\,\, f_{\lambda}(E_{p/q}) = (p/q)^{1-\lambda} (p/q)^{\lambda} = p/q. \qedhere \]

\end{proof}

\section{Warm-up: countable antichains and finite preorders} \label{sec:sbr}

Towards the proof of the main theorem, as a warm-up and illustration of how far we get with less involved constructions, we first prove that some simpler suborders are contained in the asymptotic cohomomorphism preorder, namely countable antichains and finite orders. %
\subsubsection*{Countable antichains}
Recall that in any preorder, an antichain is a set of elements in which no two elements are comparable. We show $(\mathcal{G},\asympleq)$ is universal for countable antichains: %
\begin{proposition}\label{CountAnti}
    Any countable antichain can be order-embedded into $(\mathcal{G},\asympleq)$. 
\end{proposition}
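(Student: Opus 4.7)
The plan is to exhibit a countably infinite antichain $\{G_i\}_{i \geq 1}$ inside $(\mathcal{G}, \asympleq)$; since any countable antichain admits an order-embedding into a countably infinite antichain (send distinct elements to distinct, pairwise incomparable, graphs), this is enough.

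The building blocks are supplied by Lemma~\ref{lem:sepG}: fix spectral points $f_0, f_1 \in \Delta(\mathcal{G})$ and a vertex-transitive graph $G$ with $s := f_0(G) < f_1(G) =: t$, while $f_0(E_{p/q}) = f_1(E_{p/q}) = p/q$ for every $p/q \in \mathbb{Q}_{\geq 2}$. Pick a rational $\mu \in (s,t)$ (non-empty by density of $\mathbb{Q}$), and for each $i \geq 1$ define
\[
a_i := 3 - \tfrac{1}{i}, \qquad b_i := 2 + \tfrac{\mu}{i},
\]
both of which lie in $\mathbb{Q}_{\geq 2}$. Set $G_i := (G \boxtimes E_{a_i}) \sqcup E_{b_i}$.

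Using multiplicativity under $\boxtimes$, additivity under $\sqcup$, and Lemma~\ref{lem:sepG} on the fraction graphs, I get $f_k(G_i) = f_k(G)\, a_i + b_i$ for $k \in \{0,1\}$. A one-line computation then yields, for $i < j$,
\[
f_k(G_i) - f_k(G_j) \;=\; \bigl(\mu - f_k(G)\bigr)\bigl(\tfrac{1}{i} - \tfrac{1}{j}\bigr),
\]
which is strictly positive when $k = 0$ (since $\mu > s$) and strictly negative when $k = 1$ (since $\mu < t$). By the duality theorem for $(\mathcal{G}, \asympleq)$ this simultaneously prevents $G_i \asympleq G_j$ and $G_j \asympleq G_i$, so $\{G_i\}_{i\geq 1}$ is an antichain.

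The main subtlety lies in the choice of sequences. Both $(a_i)$ and $(b_i)$ must remain in $\mathbb{Q}_{\geq 2}$ (so that the fraction graphs $E_{a_i}, E_{b_i}$ are well defined), and yet for every $i \neq j$ the ratio $(b_i - b_j)/(a_j - a_i)$ must fall inside the open interval $(s,t)$. A naive linear parametrization $a_i = c + i$, $b_i = d - \mu i$ would force $b_i$ below $2$ after finitely many steps, giving only finite antichains; the $1/i$-scaling above is the fix that keeps both sequences bounded within $\mathbb{Q}_{\geq 2}$ while making the relevant ratio equal to the fixed constant $\mu \in (s,t)$ for every pair.
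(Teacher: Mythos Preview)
Your proof is correct and follows essentially the same approach as the paper: both construct a countably infinite family of graphs $G_i = E_{a_i} \boxtimes G \sqcup E_{b_i}$ corresponding to a pencil of lines $x \mapsto a_i x + b_i$ through a common rational point $\mu \in (f_0(G), f_1(G))$, with $a_i, b_i \in \mathbb{Q}_{\geq 2}$, and then use the two spectral points $f_0, f_1$ from Lemma~\ref{lem:sepG} to witness incomparability in both directions. Your explicit parametrization $a_i = 3 - 1/i$, $b_i = 2 + \mu/i$ is a slightly cleaner variant of the paper's choice, but the idea is identical.
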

\begin{proof}
Let $f_0,f_1 \in \Delta(\mathcal{G})$ be distinct and $G$ a vertex-transitive graph as in \autoref{lem:sepG}. Suppose without loss of generality that $f_0(G) < \frac{r}{s} < f_1(G)$ for some $\frac{r}{s} \in \mathbb{Q}$. Note that the lines $\ell_n(x) = \frac{2rn+sn-s}{rn}x+\frac{2n+1}{n}$ for $n \in \mathbb{Z}_{\geq 1} $ all pass through the point $(\frac{r}{s},\frac{2r}{s}+3)$, and that both coefficients of each line are in $\mathbb{Q}_{\geq 2}$. Furthermore, the slope of $\ell_n$ increases as $n$ increases.
Consider the set of graphs
\[
\mathcal{A} = \{H_n:=E_{(2rn+sn-s)/rn} \boxtimes G \sqcup E_{(2n+1)/n} \mid n >0 \}.
\]
We will show that $\mathcal{A}$ is an antichain, proving the claim.
We see immediately that $f_0(H_n) = \ell_n(f_0(G))$ and $f_1(H_n) = \ell_n(f_1(G))$, using the properties of the asymptotic spectrum. Further note that if $a>b$, then $\ell_a(f_0(G))<\ell_b(f_0(G))$. Indeed, $\ell_a$ has a larger slope than $\ell_b$, and must therefore be below $\ell_b$ to the left of their intersection point. We similarly conclude that $\ell_a(f_1(G))>\ell_b(f_1(G))$. This translates to
\begin{align*}
    f_0(H_a)&<f_0(H_b),\\
    f_1(H_a)&>f_1(H_b).
\end{align*}
Suppose we had $H_a \asympleq H_b$, then $f_1(H_a) \leq f_1(H_b)$, which is not the case. Similarly, $H_b \asympleq H_a$ cannot hold. Thus $H_a$ and $H_b$ are incomparable. Because $a$ and $b$ were arbitrary, no two elements of $\mathcal{A}$ are comparable. %
\end{proof}
\subsubsection*{Finite preorders}
We show that $(\mathcal{G},\asympleq)$ is universal for finite preorders:
\begin{proposition}\label{FinUniv}
    Every finite order $(\mathcal{M},\leq_\mathcal{M})$ can be order-embedded into $(\mathcal{G},\asympleq)$.
\end{proposition}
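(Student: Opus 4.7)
The plan is to associate to each $m \in \mathcal{M}$ a graph $G_m$ built as a graph join of ``atom'' graphs indexed by the downset of $m$, and to choose the atoms so that each individual atom can be singled out as the strict maximum by some fraction-normalised spectral point. Concretely, for each $a \in \mathcal{M}$ introduce an atom graph $A_a$, and set
$$G_m := \sum_{a \in \mathcal{M}:\, a \leq_{\mathcal{M}} m} A_a.$$
By \autoref{SumOfGraphs}, $f(G_m) = \max_{a \leq_{\mathcal{M}} m} f(A_a)$ for every $f \in \Delta(\mathcal{G})$. The ``easy'' direction -- if $m \leq_{\mathcal{M}} n$ then $G_m \asympleq G_n$ -- is then immediate from downset inclusion together with \autoref{DualityThm}. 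The real work is to design the atoms so that the converse holds.

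For this, first apply \autoref{lem:sepG-sequel} to fix a vertex-transitive graph $G$ and reals $s < t$ such that for every $r \in [s, t]$ there is a spectral point $f_r \in \Delta(\mathcal{G})$ with $f_r(G) = r$ and $f_r(E_{p/q}) = p/q$ for all $p/q \in \QQ_{\geq 2}$. Enumerate $\mathcal{M} = \{m_1, \ldots, m_N\}$ and pick $N$ lines $\ell_k(x) = \alpha_k x + \beta_k$ with rational coefficients $\alpha_k, \beta_k \geq 2$ such that each $\ell_k$ is the strict pointwise maximum of $\{\ell_1, \ldots, \ell_N\}$ at some $x_k^{\ast} \in (s, t)$. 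Setting $A_{m_k} := E_{\alpha_k} \boxtimes G \sqcup E_{\beta_k}$, the multiplicativity and additivity properties of spectral points give $f_r(A_{m_k}) = \alpha_k r + \beta_k = \ell_k(r)$ for every $r \in [s, t]$; in particular $f_{x_k^{\ast}}(A_{m_k}) > f_{x_k^{\ast}}(A_{m_j})$ for all $j \neq k$.

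With this in hand, if $m_k \not\leq_{\mathcal{M}} m_j$, then $m_k$ belongs to the downset of $m_k$ but not to that of $m_j$, so
$$f_{x_k^{\ast}}(G_{m_k}) \geq f_{x_k^{\ast}}(A_{m_k}) > \max_{i \neq k} f_{x_k^{\ast}}(A_{m_i}) \geq f_{x_k^{\ast}}(G_{m_j}),$$
whence $G_{m_k} \not\asympleq G_{m_j}$ by \autoref{DualityThm}, establishing the order-embedding.

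The main obstacle is the middle step: producing $N$ lines with rational slopes and intercepts both at least $2$ whose upper envelope on $(s, t)$ visits every one of them. This extra flexibility is exactly what the common-pencil construction of \autoref{CountAnti} lacks, since all lines through a shared point have an upper envelope touching only the two extremal slopes. A concrete way to achieve it is to take $\ell_k$ to be the tangent at a chosen rational point $a_k \in (s, t)$ to an upward-opening parabola $y = c(x - x_0)^2 + y_0$, with rationals $x_0 < s$, $c > 0$ large enough to force each slope $2c(a_k - x_0) \geq 2$, and $y_0$ large enough to force each intercept $y_0 - c(a_k^2 - x_0^2) \geq 2$. A short computation shows $\ell_k(a_k) - \ell_j(a_k) = c(a_k - a_j)^2 > 0$ for $j \neq k$, so the choice $x_k^{\ast} := a_k$ witnesses the required strict maximality, and the plan closes.
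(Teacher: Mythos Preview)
Your proof is correct and follows the same high-level strategy as the paper: represent each element by its downset, assign an ``atom'' graph to each element, define $G_m$ as the graph join over the downset of $m$, and use \autoref{SumOfGraphs} together with duality to compare. The forward direction and the separation argument via a witness spectral point are identical in spirit.

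Where you differ is in the construction of the atoms. The paper builds, for $|\mathcal{M}|=n$, polynomials $p_1,\dots,p_n\in\QQ_{\geq 2}[X]$ whose values achieve \emph{every} permutation on a prescribed set of $n!$ points (via interpolation followed by rational approximation and a global shift), and maps $\sum a_iX^i$ to $\bigsqcup E_{a_i}\boxtimes G^{\boxtimes i}$. This ``every ordering'' property is stronger than what the argument actually uses: only ``each $p_k$ is the strict maximum somewhere'' is invoked. You exploit exactly this observation and realise it with \emph{lines} rather than higher-degree polynomials, taking tangents to an upward parabola at distinct rational abscissae in $(s,t)$; the identity $\ell_k(a_k)-\ell_j(a_k)=c(a_k-a_j)^2$ is a clean way to see that each tangent dominates at its own point of tangency. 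Your construction is more elementary and concrete (no interpolation or approximation step), and as a bonus it dovetails with the line-based machinery of \autoref{sec:ups}. The paper's polynomial route, on the other hand, makes transparent that arbitrarily rich finite order patterns can be realised among the graphs $\bigsqcup E_{a_i}\boxtimes G^{\boxtimes i}$, which is of some independent interest even if not needed here.
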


In light of the following remark, we can not simply copy the argument proving finite universality for the cohomomorphism order (as in~\cite[Chapter~3]{Hell2004GraphsAH}) to prove the statement for the asymptotic order. We can, however, use a generalization of our construction or finite antichains, while still using a few of the ideas from this non-asymptotic argument.

\begin{remark}\label{rem:comp}
We draw a comparison with the proof of the analogous statement for the (non-asymptotic) cohomomorphism order on graphs, which we will recall here for completeness. It is well known that any finite partial preorder  $(\mathcal{M}, \leq_{\mathcal{M}})$ is isomorphic to a suborder of the inclusion order $(\mathcal{P}(X),\subseteq)$ on the power set of some finite set $X \subset \mathbb{N}_{>0}$ ; indeed $x \mapsto \{y \in X\mid y \leq_X x\}$ is easily seen to work. In the case of the cohomomorphism order, we can then leverage the existence of countable antichains as follows: Let $G_1,G_2,\ldots,G_n$ be pairwise incomparable graphs, assumed to all have connected complement. To any finite subset $A$ of $X$, assign the graph $G_A=\sum_{a \in A} G_a$. Then $G_A \leq G_B \iff A \subseteq B$ indeed holds, because a cohomomorphism $\sum_{i \in A} G_i \rightarrow \sum_{j \in B} G_j$ exists iff for all $i \in A$ there is some $j \in B$ so that there is a cohomomorphism $G_i \rightarrow G_j$, in which case $j=i$, and hence $A \subseteq B$, is forced. This embeds the finite order into the cohomomorphism order on graphs.

Sadly, the same property of cohomomorphisms between join does not generalize to asymptotic cohomomorphisms: There indeed exist graphs $H_1,H_2,H_3$ with connected complement sthat are pairwise asymptotically incomparable, but with $H_1 + H_2 \lesssim H_1 + H_3$. To construct such graphs, we can use a similar construction as in the proof of \autoref{CountAnti}, but this time ensuring the coefficients of the lines are natural numbers (we can manage this because we have only a finite antichain here). For concreteness, let $G = C_5$. Define
    \[
        H_1 = E_6 \boxtimes G \sqcup E_7,\quad
        H_2 = E_3 \boxtimes G \sqcup E_{14},\quad
        H_3 = E_{21}.
    \]
    These are the graphs coming from the lines $\ell_1:y=6x+7,\ell_2:y=3x+14,\ell_3:y=21$ which intersect in $x = \frac{7}{3}$. Because $\chi(C_5)>\frac{7}{3}>\bar{\theta}(C_5)$, an argument as in \autoref{CountAnti} proves that the graphs $H_1,H_2,H_3$ form an asymptotic antichain.
    
    The main observation is that for any spectral point $\phi$, we have that $\phi(H_i) = \ell_i(\phi(G))$, so that either $\phi(H_1) \geq \phi(H_2) \geq \phi(H_3)$ or $\phi(H_1) \leq \phi(H_2) \leq \phi(H_3)$ has to hold (if $\phi(G) \geq \frac{7}{3}$ or $\phi(G) \leq \frac{7}{3}$, respectively). In the first case, we have $\phi(H_1 + H_2) = \phi(H_1) = \phi(H_1 + H_3)$, by \autoref{SumOfGraphs}. In the second case, $\phi(H_1 + H_2) = \phi(H_2) \leq \phi(H_3) = \phi(H_1+H_3)$. So either way, $\phi(H_1 + H_2) \leq \phi(H_1+H_3)$, so $H_1+H_2 \lesssim H_1+H_3$ by \autoref{DualityThm}.
\end{remark}

\begin{proof}[Proof of \autoref{FinUniv}]
 As explained in \autoref{rem:comp}, we can assume that $\mathcal{M} \subseteq  \mathcal{P}(X)$ and $ \leq_\mathcal{M} =\subseteq$ for some finite set $X$. The goal is therefore to assign graphs $G_M$ to sets $M \in \mathcal{M}$ so that $G_M \asympleq G_N$ if and only if $M \subseteq N$. Let $f_0,f_1,G$ be as in \autoref{lem:sepG}, and assume $f_0(G)<f_1(G)$. We will also need $f_\lambda$ as constructed in \autoref{interpol}. We immediately note that $f_r(G) < f_s(G)$ if $r<s$ for $r,s \in [0,1]$, and that $f_{\lambda}$ also satisfies $f_{\lambda}(E_{p/q})=\frac{p}{q}$ for any $\lambda \in [0,1]$.

All subsets in $\mathcal{M}$ are contained in $\mathcal{P}(X)$ which we can assume to be $\{1,2,\ldots,n\}$ for some $n$ after relabelling. Now choose any $n$ polynomials $p_1,p_2,\ldots,p_n$ in $\mathbb{Q}_{\geq 2}[X]$ such that the values $p_1(x),p_2(x),\ldots,p_n(x)$ take on every possible ordering when we let $x$ range over the $n!$ points $\{f_\frac{i}{n!}(G) \mid 1 \leq i \leq n!\}$. This means that for any $\sigma \in S_n$ there exists $x_i\in  \{f_\frac{i}{n!}(G) \mid 1 \leq i \leq n!\}$ such that $$p_{\sigma(1)}(x_i)>p_{\sigma(2)}(x_i)>\cdots >p_{\sigma(n)}(x_i).$$ 
These can indeed be constructed: by interpolation, there certainly exist $n$ real polynomials satisfying this, which we can uniformly approximate on the relevant interval by rational polynomials (and we can ensure the coefficients are in $\mathbb{Q}_{\geq 2}$ by adding a suitable polynomial to every of these approximating polynomials).

Having constructed such polynomials, assign to $m \in \{1,\dots,n\}$ the graph $G_m$ which is the image of $p_m$ under the following mapping:
\[
\sum_{i=0}^d a_iX^i \mapsto \bigsqcup_{i=0}^d E_{a_i}\boxtimes (G^{\boxtimes i}).
\]
We can then assign to any $M \in \mathcal{M} $ the graph $G_M = \sum_{m \in M} G_m$. We note that $f_{\lambda}(G_m) = p_m(f_{\lambda}(G))$, and that $f_{\lambda}(G_M) = \max_{m \in M} f_{\lambda}(G_m) =\max_{m \in M} p_M(f_{\lambda}(G))$, with the first equality here coming from \autoref{SumOfGraphs}.

We only need to prove that these graphs satisfy the condition \[G_M \asympleq G_N \iff M \subseteq N.\] Necessity is obvious, as there is a (non-asymptotic) cohomomorphism from $G_M$ to $G_N$ if $M \subseteq N$.  We now show that if $M$ and $N$ are incomparable, then so are $G_M$ and $G_N$. Indeed, there is some $a \in M\setminus N$ and some $b \in N\setminus M$. Now, we have a point $f_\frac{i}{n!}(G)$ where $p_a$ is the greatest polynomial and a point $f_\frac{j}{n!}(G)$ where $p_b$ is the greatest. By \autoref{SumOfGraphs} and using properties of the spectrum, we find that $f_{\frac{i}{n!}}(G_M) = \max_{m \in M} f_{\frac{i}{n!}}(G_m) = f_{\frac{i}{n!}}(G_a)$ and $f_{\frac{i}{n!}}(G_N) = \max_{m' \in N} f_\frac{i}{n!}(G_{m'}) < f_\frac{i}{n!}(G_a)$  (because $a \not \in N$ while $G_a$ has the greatest value for $f_\frac{i}{n!}$ by construction). We conclude that $G_M \not \asympleq G_N$, and analogously looking at the point $f_\frac{j}{n!}(G)$ we conclude that $G_N \not \asympleq G_M$, finishing the proof.
\end{proof}

\section{Proof of \autoref{th:main}}   \label{sec:ups}

 To prove that every finite preorder can be embedded into $ (\mathcal{G}, \asympleq)$ (see \autoref{FinUniv}), we used the fact that any finite preorder can be order-embedded into a preorder of the form $(\mathcal{P}(X), \subseteq)$ for some finite set $X$. Therefore, it was enough to  order-embed preorders of the form $(\mathcal{P}(X), \subseteq)$ into $ (\mathcal{G}, \asympleq)$. In the proof of \autoref{th:main}, instead of $(\mathcal{P}(X), \subseteq)$ we will employ a more complicated preorder defined on the class of all finite antichains  in the power set of binary words $W=\{0,1\}^*$,  which is known to be  countably universal. As in \autoref{FinUniv}, to every set $A$ we will assign a graph of the form $G_A = \sum_{a 
\in A} G_a$.  Instead of polynomials, we will use a set of lines encoding the binary words.    
The construction of these lines will proceed by induction with respect to the length-lexicographic (shortlex) order.

\subsubsection*{A countably universal preorder on sets of binary words}

 Let  $W= \{0, 1\}^*$ be the set of all finite words over the alphabet $\{0, 1\}$. For words $w, w'$ we write $w \leq_W w'$ if and only if $w'$ is a prefix (i.e.~initial segment)  of~$w$. 
For example, ${011000} \leq_W {011}$ and $010111 \nleq_W 011.$

Let $\mathcal{W}$ be the class of all finite subsets $ A$ of $W=\{0, 1\}^*$ such that no distinct words $w, w'$ in $A$ satisfy $w \leq_W w'$ (i.e.~$A$ is an antichain). For $A,B \in \mathcal{W}$, we write $A  \leq_\mathcal{W} B$ if and only if for each $a \in A$ there exists $b \in B$ such that $a \leq_W b.$

\begin{theorem}[\cite{HubickaNestrilUniversal}, Corollary 2.6]
    $(\mathcal{W}, \leq_{\mathcal{W}})$ is countably universal.
\end{theorem}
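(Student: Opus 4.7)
My plan is a back-and-forth-style induction. Given a countable preorder $(P,\leq_P)$ enumerated as $\{p_n\}_{n \in \mathbb{N}}$, I would build antichains $A_n \in \mathcal{W}$ one at a time so that $A_i \leq_\mathcal{W} A_j \iff p_i \leq_P p_j$ holds for all $i,j$. The central reformulation is that $\leq_\mathcal{W}$ coincides with inclusion of descendant-closures: letting $D_A := \{w \in W : w \leq_W a \text{ for some } a \in A\}$, one checks that $A \leq_\mathcal{W} B$ if and only if $D_A \subseteq D_B$. So the task becomes constructing, for each $n$, a finitely generated descendant-closed set $D_{A_n} \subseteq W$ exhibiting the correct inclusion pattern relative to those built earlier.

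At step $n$, put $S_n = \{j < n : p_j \leq_P p_n\}$ and $T_n = \{j < n : p_n \leq_P p_j\}$. The target $D_{A_n}$ has to satisfy the sandwich
\[\bigcup_{j \in S_n} D_{A_j} \;\subseteq\; D_{A_n} \;\subseteq\; \bigcap_{j \in T_n} D_{A_j},\]
with strict inclusion or incomparability for every $j$ not accounted for. I would assemble $A_n$ in two pieces. First, include the antichain of minimal elements of $\bigcup_{j \in S_n} A_j$, which forces the lower inclusion. Second, adjoin a \emph{fresh tag} $r_n \in W$ chosen deep inside $\bigcap_{j \in T_n} D_{A_j}$ but outside every $D_{A_l}$ with $l \notin T_n$; this certifies the non-containments $A_n \not\leq_\mathcal{W} A_l$ for $l \notin T_n$. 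Symmetrically, fresh tags $r_l$ introduced at earlier steps certify $A_l \not\leq_\mathcal{W} A_n$ whenever $l \notin S_n$.

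The hard part will be guaranteeing that the fresh tags always exist. A naive choice of early antichains (say $A_1=\{0\}$ and $A_2=\{1\}$) would collapse $\bigcap_{j \in T_n} D_{A_j}$ to the empty set, leaving no room for a future common lower bound of $A_1$ and $A_2$. My remedy is to carry the stronger inductive invariant that each $A_n$ is built so that $D_{A_n}$ contains an entire infinite binary subtree $R_n \subseteq W$ rooted at the tag $r_n$, and for every pair $n,m$ with $p_n \leq_P p_m$ we have $R_n \subseteq D_{A_m}$ (and in particular $r_n \in D_{A_m}$). Under this invariant, the intersection $\bigcap_{j \in T_n} D_{A_j}$ always contains infinitely many disjoint infinite subtrees—namely the $R_m$'s for $m \in T_n$ together with a stockpile of yet-unused subtrees reserved inside the intersection—so a brand new $R_n$ disjoint from every prior $R_m$ can always be peeled off.

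Verifying that this invariant propagates is the delicate core of the proof. When introducing $A_n$, we must not only carve out a new $R_n$ inside $\bigcap_{j \in T_n} D_{A_j}$, but also retroactively extend each earlier $A_m$ with $p_n \leq_P p_m$ by adjoining a carefully chosen prefix of $r_n$, so that $R_n \subseteq D_{A_m}$ holds. This augmentation is the ``back'' step of the back-and-forth and it must be performed in a way that neither creates spurious comparabilities with other $A_l$'s nor destroys existing ones; the key trick is to pick all the $r_n$'s inside a fixed ``generic region'' of $W$ that is orthogonal to the witnesses of the already-established negative relations. Once the invariant propagates through every step, each positive $\leq_\mathcal{W}$-relation is witnessed by inclusion of descendant-closures and each negative one by a tag outside the descendant-closure, delivering the desired order-embedding $(P,\leq_P) \hookrightarrow (\mathcal{W},\leq_\mathcal{W})$ and thereby the countable universality of $(\mathcal{W}, \leq_\mathcal{W})$.
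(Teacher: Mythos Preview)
First, a framing note: the paper does not prove this theorem itself; it is quoted as Corollary~2.6 of \cite{HubickaNestrilUniversal} and used as a black box. So there is no in-paper argument to compare against, and I assess your plan on its own merits.

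Your reformulation $A \leq_\mathcal{W} B \iff D_A \subseteq D_B$ is correct, and you have correctly located the real obstruction: for incomparable $p_{j_1},p_{j_2}$ processed early, the intersection $D_{A_{j_1}}\cap D_{A_{j_2}}$ may be empty, blocking any later common lower bound. However, your stated invariant does not resolve this. You assert that $\bigcap_{j\in T_n} D_{A_j}$ contains the subtrees $R_m$ for $m\in T_n$, but the invariant only yields $R_m\subseteq D_{A_j}$ when $p_m\leq_P p_j$; for two indices $m,j\in T_n$ you know $p_n\leq_P p_m$ and $p_n\leq_P p_j$, which says nothing about $p_m$ versus $p_j$. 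Unless $\{p_j:j\in T_n\}$ happens to have a $\leq_P$-minimum, none of the $R_m$ need sit in the intersection.

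Your fallback is the ``back step'': pick $r_n$ in a generic region and retroactively adjoin a prefix of $r_n$ to each earlier $A_m$ with $m\in T_n$. The fatal issue is finiteness. A single $p_m$ may have infinitely many elements below it in $P$, each triggering a modification of $A_m$; nothing in your scheme bounds the number of words added to $A_m$, so $A_m$ need not remain in $\mathcal{W}$. The obvious attempt to add a single once-and-for-all reservoir word $g$ to every $A_m$ (so that all future $r_n$ lie below $g$) fails in the other direction: then $r_n\in D_{A_l}$ for \emph{all} $l$, destroying the negative witnesses for $l\notin T_n$. Threading between ``too few additions'' and ``too many'' is precisely where a substantive idea is needed, and your outline does not supply one. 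Hubi\v{c}ka and Ne\v{s}et\v{r}il avoid this by an explicit global encoding rather than a step-by-step back-and-forth; if you want to push your inductive approach through, you would need a much stronger invariant that pre-allocates, at the moment $A_m$ is created, finitely many reservoir words covering \emph{all} future common-lower-bound patterns involving $m$.
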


 By embedding $(\mathcal{W}, \leq_{\mathcal{W}})$  into $(\mathcal{G}, \asympleq)$, we will show that the latter is universal.

\subsubsection*{Embedding  binary words into graphs}

     Before embedding the entirety of the preorder $(\mathcal{W},\leq_{\mathcal{W}})$ into $(\mathcal{G}, \asympleq)$, we will first assign graphs $G_a$ to each binary string $a$ in the preorder $(W, \leq_W)$. This assignment will respect the order relations in $W$, but also an additional property given by \autoref{existenceOfNiceGa}. 
    \begin{lemma}\label{existenceOfNiceGa}
        We can assign to each binary string $a \in W$ a graph $G_a$  and a ``witness spectral point'' $\phi_a$ such that the following properties hold:
        \begin{enumerate}[\upshape(1)]
        \item For every $v,w\in W$, if $v \leq_W w$, then  $G_v \leq_\mathcal{G} G_w$. 
        \item For every $v, w\in W$, if~$v \nleq_W w$, then  $\phi_v(G_v)>\phi_v(G_w)$, and thus in particular $G_v \not\asympleq G_w$.
        
        \end{enumerate}
    \end{lemma}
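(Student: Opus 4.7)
The plan is to leverage the continuous family $\{f_r : r \in [s,t]\} \subseteq \Delta(\mathcal{G})$ from \autoref{lem:sepG-sequel} together with the graph join operation. To each $a \in W$ I associate a graph
\[
G_a \;=\; \sum_{l \in L_a} \bigl(E_{m_l} \boxtimes G \sqcup E_{c_l}\bigr),
\]
a finite join of ``line-summands,'' where $L_a$ is a finite set of pairs $(m_l, c_l) \in \mathbb{Q}_{\geq 2}^2$ so that $E_{m_l}$ and $E_{c_l}$ are well-defined fraction graphs. Combining the basic properties of spectral points with \autoref{SumOfGraphs}, for every $r \in [s,t]$ I obtain
\[
f_r(G_a) \;=\; \max_{l \in L_a} (m_l r + c_l) \;=\; \max_{l \in L_a} \ell_l(r),
\]
the upper envelope at $r$ of the affine functions $\ell_l(x):=m_l x + c_l$. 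The witness spectral point will be $\phi_a := f_{\rho_a}$ for a carefully chosen $\rho_a \in [s,t]$.

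Property~(1) then follows from the sufficient condition that, whenever $w$ is a prefix of $v$, every line in $L_v$ is componentwise dominated by some line in $L_w$: this yields a cohomomorphism $G_v \to G_w$ by mapping each join-summand of $G_v$ into a dominating summand of $G_w$ (using $E_{m_l} \homomoleq E_{m_{l'}}$ whenever $m_l \leq m_{l'}$, and similarly for $c_l$, together with the identity on $G$ and the disjoint-union structure). Property~(2) becomes: for every $v$ there is $\rho_v \in [s,t]$ such that
\[
\max_{l \in L_v} \ell_l(\rho_v) \;>\; \max_{l \in L_w} \ell_l(\rho_v)
\]
for every $w \in W$ which is not a prefix of $v$.

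First I would construct the sets $L_a$ and points $\rho_a$ by induction on the shortlex order on $W$. When processing a new string $b$ (a child of a previously-built $a$), I would extend $L_a$ to $L_b$ by introducing a small collection of new lines encoding the new bit of $b$, componentwise dominated by the parent's lines so that Property~(1) is preserved, and I would pick $\rho_b \in [s,t]$ in the open, non-empty region where the required strict-domination inequalities hold with respect to all previously-built strings. Throughout I would maintain the inductive invariant that for each already-built $v$ there is a margin $\delta_v > 0$ with $\max_{l \in L_v} \ell_l(\rho_v) \geq \max_{l \in L_w} \ell_l(\rho_v) + \delta_v$ for every built $w$ not a prefix of $v$; this positive margin keeps the finite constraint system feasible at each later step.

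The hardest point is securing, for each $v$, a single witness $\rho_v \in [s,t]$ that dominates simultaneously all non-prefix competitors $w$ --- strict extensions of $v$ as well as strings incomparable to $v$ diverging from $v$ in either the ``$0\!\to\!1$'' or the ``$1\!\to\!0$'' direction at various bit-positions. With only one line per string, these two divergence-directions impose mutually conflicting one-sided constraints on $\rho_v$, so no single choice in the fixed interval $[s,t]$ could work as soon as $v$ contains both bit-values. Allowing a \emph{set} of lines per string breaks this obstruction: the upper envelope $\max_{l \in L_v}\ell_l$ is a piecewise-linear function that, through a careful inductive choice of the lines, can be shaped so that at $\rho_v$ it strictly exceeds the upper envelope of $L_w$ for every non-prefix $w$. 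This is precisely the ``lines and points'' construction alluded to in the outline, which the inductive step has to deliver at every stage.
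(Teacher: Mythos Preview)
Your framework---assign to each word a finite set of rational affine lines, evaluate the associated join-graph via the fraction-normalised spectral points $f_r$, and witness failure of $\asympleq$ at a chosen $\rho_v$---is sound and in fact contains the paper's construction as the special case $|L_a|=1$. The genuine gap is that you never carry it out: you assert that the inductive step ``has to deliver'' the required shaped envelope, but give no rule for producing the lines of $L_b$ from those of $L_a$, no proof that the feasible region for $\rho_b$ is nonempty, and no argument that the margins $\delta_v$ persist against all (infinitely many) later words. Without these, the proposal is a plan, not a proof.

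More importantly, your stated reason for introducing multiple lines per word is mistaken. You claim that with a single line the $0\!\to\!1$ and $1\!\to\!0$ divergences impose mutually conflicting one-sided constraints on $\rho_v$; but the paper's \autoref{aux} shows that one line $\ell_w(x)=a_wx+b_w$ per word suffices, and then sets $G_w=E_{a_w}\boxtimes G\sqcup E_{b_w}$ directly. The key idea you are missing is an explicit perturbative construction in shortlex order: $\ell_{u0}$ and $\ell_{u1}$ are taken as small displacements of $\ell_u$ (namely $a_{u0}=a_u-(1-\mu)\varepsilon$, $a_{u1}=a_u-\varepsilon$, $b_{u0}=b_u-2\varepsilon$, $b_{u1}=b_u-\varepsilon$ with $1/\mu=r_u$), so that both lie strictly below $\ell_u$ on all of $(s,t)$ and cross each other precisely at $r_u$. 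Consequently every descendant of $u$ is \emph{globally} below $\ell_u$, which handles all future words automatically; the only live constraints on $r_v$ at each stage come from the finitely many already-built siblings and cousins, and continuity together with the controlled intersection point $r_u$ makes these compatible (choose $r_{u1}<r_u<r_{u0}$ close to $r_u$). The ``conflicting one-sided constraints'' you anticipate never arise, because the intersection points are designed rather than arbitrary.
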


    Let us prove \autoref{th:main} assuming \autoref{existenceOfNiceGa} is true.
\begin{proof}[Proof of \autoref{th:main} assuming \autoref{existenceOfNiceGa}]
Assign to each binary string $v \in W$ a graph $G_v$ as in \autoref{existenceOfNiceGa}, and define for all sets $A \in \mathcal{W}$ the graph $G_A = \sum_{a \in A} G_a.$
The sum denotes the graph join of its summands. We claim that $A \mapsto G_A$ is an order-embedding.

Suppose $A \leq_{\mathcal{W}} B$.  We first prove  that $G_A \homomoleq G_B$, which implies $G_A \asympleq G_B$. In $G_A = \sum_{a \in A} G_a$, there are no non-edges between the different components $G_a$ for $a \in A$. Hence, it is enough to cohomomorphically embed each $G_a$ into $G_B$. But since $A \leq_{\mathcal{W}} B$, for each $a \in A$ there is an element $b$ in $B$ such that $a \leq_W b$ and hence $G_a \homomoleq G_b \homomoleq G_B$. Therefore $G_A \homomoleq G_B.$

Next, suppose $A \not\leq_ \mathcal{W}B$. Then, there is a word $a \in A$ such that all words $b$ in $B$ satisfy $a \nleq_W b$.  By \autoref{existenceOfNiceGa}, there exists a witness spectral point $\phi_a$ such that for all $a' \ngeq_W a$, $\phi_a(G_a)> \phi_a(G_{a'})$. Thus, by \autoref{thmVrana}
$$\phi_a(G_A) \geq  \phi_a(G_a)> \max_{b \in B} \phi_a(G_b) = \phi_{a}(G_B).$$ Therefore, we cannot have $G_A \asympleq G_B$. 
\end{proof}

\begin{remark}
    Assuming \autoref{existenceOfNiceGa}, the above proof of \autoref{th:main} also reproves that the preorder $(\mathcal{G}, \homomoleq)$ is universal. Indeed, if $A \leq_{\mathcal{W}} B$, then $G_A \homomoleq G_B$, and if $A \not\leq_{\mathcal{W}} B$ then  %
    $G \not\asympleq H$, so %
    $G \not \homomoleq H.$
\end{remark}
\subsubsection*{Encoding binary words via lines}

 To construct graphs $G_a$ satisfying the properties of \autoref{existenceOfNiceGa}, we will use the asymptotic spectrum duality to ``simulate''  a certain family of lines. As in \autoref{sec:sbr}, we will use them to create incomparability relations between graphs. These lines, equipped with the pointwise comparison preorder, will themselves contain the preorder $(W, \leq_W)$ as an induced suborder and moreover satisfy some additional property.

\begin{lemma} \label{aux}
   Let $1\leq s< t$. We can assign to each word $w \in W$ a pair of rational numbers $(a_w,b_w) \in \mathbb{Q}_{> 2}^2$, a corresponding linear map $\ell_w: x \to a_wx + b_w$ and a ``witness value'' $r_w \in (s,t)$ such that the following conditions are satisfied:
   \begin{enumerate}[\upshape(i)]
   \item For every $v,w \in W$, if $v \leq_W w$ and $v \neq w$, then $a_v <a_w$ and $b_v <b_w$, and thus in particular $\ell_v < \ell_w$ on $(s,t)$. 
   \item For every $v,w \in W$, if $v \nleq_W w$, then $\ell_{v}(r_v) > \ell_w(r_v)$, and in particular $\ell_v \nleq \ell_w$ on $(s,t)$. 
   \end{enumerate}
\end{lemma}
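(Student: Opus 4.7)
I would construct the triples $(a_w, b_w, r_w)$ by induction on $|w|$ in shortlex order, arranging each pair of children $\ell_{w0}, \ell_{w1}$ as very small perturbations of the parent $\ell_w$ that cross each other exactly at the parent's witness $r_w$. This makes witnesses inheritable (with a tiny shift), so the strict inequalities of (ii) propagate from level to level with only a controlled weakening.

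\textbf{Construction.} Set $(a_\epsilon, b_\epsilon) := (10, 10)$ and pick any $r_\epsilon \in (s,t) \cap \mathbb{Q}$. For the inductive step, assume $(\ell_u, r_u)$ is defined for all $|u| \leq n$ satisfying (i) and (ii) on length-$\leq n$ pairs. Since there are only finitely many such pairs and all relevant inequalities in (ii) are strict, there is a uniform gap $\delta_n > 0$ with $\ell_v(r_v) - \ell_w(r_v) \geq \delta_n$ for all $v,w$ of length $\leq n$ with $v \nleq_W w$. For each $w$ of length $n$, choose a small positive rational $\eta_w$ and define
\[
a_{w0} = a_w - 2\eta_w,\ \ b_{w0} = b_w - \eta_w r_w,\ \ a_{w1} = a_w - \eta_w,\ \ b_{w1} = b_w - 2\eta_w r_w.
\]
Then $\ell_{w0}(x) - \ell_{w1}(x) = \eta_w(r_w - x)$, so $\ell_{w0}$ and $\ell_{w1}$ cross exactly at $x = r_w$, with $\ell_{w0} > \ell_{w1}$ for $x < r_w$ and $\ell_{w0} < \ell_{w1}$ for $x > r_w$. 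Shrink $\eta_w$ enough that $a_{wi}, b_{wi} > 2$ and $\sup_{x \in [s,t]}|\ell_{wi}(x) - \ell_w(x)| < \delta_n/4$. Then pick a small rational $\eta_w' > 0$ and set $r_{w0} := r_w - \eta_w'$, $r_{w1} := r_w + \eta_w'$, both in $(s,t) \cap \mathbb{Q}$, taking $\eta_w'$ small enough that, by Lipschitz continuity of the finitely many lines built so far, $|\ell_u(r_{wi}) - \ell_u(r_w)| < \delta_n/8$ for every $u$ with $|u|\leq n$.

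\textbf{Verification and main obstacle.} Condition (i) follows immediately from $a_{wi}<a_w$, $b_{wi}<b_w$ and transitivity on chains of strict inequalities; then the case of (ii) where $w$ properly extends $v$ is automatic since both coefficients of $\ell_w$ are strictly smaller than those of $\ell_v$. The nontrivial case of (ii) is an incomparable pair at length $\leq n+1$, which splits into three subcases: siblings $w0, w1$ are separated at $r_{wi}$ because the crossing is at $r_w$ and $r_{wi}$ lies strictly on the appropriate side, giving a gap of $\eta_w\eta_w'>0$; a new $wi$ versus an old $u$ incomparable with $wi$ (which forces $u$ incomparable with $w$) uses the induction hypothesis $\ell_w(r_w) > \ell_u(r_w) + \delta_n$ together with $\sup_{[s,t]}|\ell_{wi} - \ell_w| < \delta_n/4$ and $|\ell_u(r_{wi}) - \ell_u(r_w)| < \delta_n/8$ to yield $\ell_{wi}(r_{wi}) > \ell_u(r_{wi})$ with a positive residual gap; a new-vs-new pair $wi, uj$ with $w \neq u$ reduces, via (i) ($\ell_{uj} \leq \ell_u$ on $[s,t]$), to the previous subcase; and an old $v$ versus a new $w'$ reduces by (i) to dominating the corresponding sibling-branch root at length $\leq n$, already covered by the inductive hypothesis. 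The main obstacle is keeping the new uniform gap $\delta_{n+1}>0$ positive at level $n+1$: the perturbations erode the old gaps, but since per level there are only finitely many constraints and each $\eta_w, \eta_w'$ can be taken arbitrarily small relative to $\delta_n$, a strictly positive $\delta_{n+1}$ survives and the induction closes.
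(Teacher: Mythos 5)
Your proposal is correct and follows essentially the same strategy as the paper: construct the children's lines $\ell_{w0}, \ell_{w1}$ as tiny downward perturbations of the parent $\ell_w$ arranged to cross exactly at the parent's witness $r_w$, then place $r_{w0}, r_{w1}$ slightly to either side of $r_w$, and argue by continuity (finitely many strict constraints) that all inequalities of type (ii) persist. The organizational differences — you process a whole length-level at once rather than one sibling pair in shortlex order, you use slightly different displacement formulas, and you make the continuity argument explicit via a uniform gap $\delta_n$ — are cosmetic, not substantive.
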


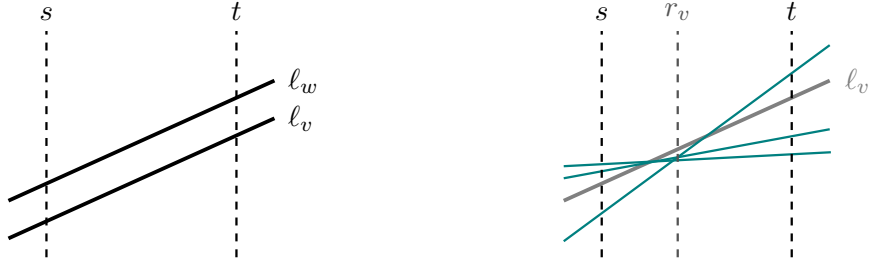
\begin{figure}[h!]
\begin{minipage}{0.5\textwidth}\centering
\begin{tikzpicture}[scale=0.25]

    \coordinate (E) at (7,3);
    \coordinate (F) at (21,9.37);
    \coordinate (A) at (7,1);
    \coordinate (B) at (21,7.37);

    \draw[line width =0.5mm] (E) --(F) node[right, right = 0.3mm] {\(  \ell_w \)}; %
     \draw[line width =0.5mm] (A) --(B) node[right, right = 0.3mm] {\(  \ell_v \)};

    \draw[dashed, ,  line width = 0.3mm] (9,0) -- (9,12) node[above] {\( s \)};
    \draw[dashed,  line width = 0.3mm] (19,0) -- (19,12) node[above] {\( t \)};
\end{tikzpicture}

\end{minipage}
\begin{minipage}{0.5\textwidth}\centering
\begin{tikzpicture}[scale=0.25]
    \coordinate (A) at (7,0);
    \coordinate (B) at (7,12);
    \coordinate (C) at (21,0);
    \coordinate (D) at (21,12);
    \coordinate (E) at (7,3);
    \coordinate (F) at (21,9.37);
    \coordinate (G) at (7,4.83);
    \coordinate (H) at (21.06,5.57);
    \coordinate (I) at (7,4.19);
    \coordinate (J) at (21,6.79);
    \coordinate (K) at (7,0.85);
    \coordinate (L) at (21,11.25);
    \coordinate (M) at (7.08,1.43);
    \coordinate (N) at (13,0);
    \coordinate (O) at (13,12);

    \draw[gray, line width =0.5mm] (E) --(F) node[right, right = 0.5mm] {\(  \ell_v \)}; %

    \draw[teal, line width = 0.3mm] (G) -- (H); %
    \draw[teal, line width = 0.3mm] (I) -- (J); %
    \draw[teal, line width = 0.3mm] (K) -- (L); %
    \draw[dashed, line width = 0.3mm] (9,0) -- (9,12) node[above] {\( s \)};
    \draw[dashed, line width = 0.3mm] (19,0) -- (19,12) node[above] {\( t \)};
    \draw[dashed, Azure4, line width = 0.3mm] (13,0) -- (13,12) node[above] {\( r_v \)};
\end{tikzpicture}
\end{minipage}
\caption{Left: If $v <_W w$, then $\ell_v$ lies below $\ell_{w}$. Right: The blue lines $\ell_{w}$ for $v \nleq_W w$ have the property that at the witness value $r_v$, the line $\ell_v$ lies above all the blue lines.}\label{fig:lines}
\end{figure}

Now, let us prove \autoref{existenceOfNiceGa} assuming \autoref{aux}.
The idea is as follows.  For every $w \in W$, the line $\ell_w(x)  = a_w x + b_w$ will give rise to a graph $G_w:= E_{a_w} \boxtimes G  \sqcup E_{b_w}$ for some fixed vertex-transitive graph $G$ as in \autoref{lem:sepG-sequel}.  The point $r_w$ will be related to the witness spectral point $\phi_w$  of $G_w$.  Using \autoref{lem:sepG-sequel}, the latter spectral point will be constructed so that for all words $w,v \in W$, $\phi_w(G) = r_w$, $\phi_w(E_{p/q})=p/q$ for all $p/q \geq 2 $ and thus $\phi_w(G_v)  = a_vr_w + b_v = \ell_v(r_w)$.

\begin{proof}[Proof of \autoref{existenceOfNiceGa} assuming \autoref{aux}. ]

Let $G$, $s,t$ with $s <t$ be as in \autoref{lem:sepG-sequel}. Suppose that for all $ w\in W$ the rational numbers $a_w,b_w$, linear maps $\ell_w$ and witness values $r_w$ are as in \autoref{aux}. For every word $w \in W$, set $$G_w:= E_{a_w} \boxtimes G  \sqcup E_{b_w}.$$ 

If $w, v$ are words with $w \leq_W v$, then $a_w \leq a_v$ and $b_w \leq b_v$, whence also $E_{a_w} \homomoleq E_{a_v}$ and $E_{b_w} \homomoleq E_{b_v}$, and thereby by \autoref{aux} (see the left part of \autoref{fig:lines}), $$G_{w} \homomoleq G_{v}.$$ This proves claim (i).

  Let us prove claim (ii). By \autoref{lem:sepG-sequel} and our choice of $G$, for all $v$ there is a spectral point $\phi_v$ such that for all $p/q \in \mathbb{Q}_{\geq 2}$, $\phi_v(E_{p/q}) = p/q $ and $\phi_{v}(G) = r_v $. Thus for any $v,w \in W$, by  \autoref{aux} (see the right part of \autoref{fig:lines}), $$\phi_v(G_w)  =\phi_v(E_{a_w})\phi_v(G) +\phi_w(b_w)= a_w r_v + b_w = \ell_w(r_v).$$ Hence, if $ v \nleq_W w$, then \[\phi_v(G_v) = \ell_v(r_v) >  \ell_w(r_v) = \phi_v(G_w). \qedhere\] 
\end{proof}

Before presenting a rigorous proof of \autoref{aux}, let us first give an informal description of the strategy. We will process the strings from shortest to longest, starting with $\emptyset$, then $0$ and $1$, moving to $00$ and $01$, $10$ and $11$ and so on. For convenience, define  the  \emph{shortlex} order $\mathcal{L}$ on $W$: for $w,v \in W$, let $w \leq_{\mathcal{L}} v $ if either $w$ is strictly shorter than~$v$, or they are of equal length and $w$ is below $v$ in the lexicographical order. 

At each step,  we will construct the sets  $\{a_w,b_w,r_w,\ell_w\}$  (consisting of  rational numbers $a_w,b_w,r_w$ and lines $\ell_w$) in pairs.  More precisely,  once we reach the word~$u0$, we will choose the parameters corresponding to the words $u0, u1$ at the same time. In addition, we will choose $a_{u0},a_{u1}, b_{u0}, b_{u1}, r_{u0},r_{u1}$ to lie  very close to $a_u, b_u,r_u$ respectively.
\vskip2ex

\noindent {\bf Example (Construction of the first members).}  The line $\ell_\emptyset$ can be any line of the form $\ell_\emptyset(x) = a_\emptyset x + b_\emptyset$ such that $a_\emptyset, b_\emptyset >2$, and $r_\emptyset$ is any value in $(s,t)$ (see \autoref{ConstrLES}).

For the next words $0$ and $1$, since $0,1 <_W\emptyset$, we need to ensure that $2<  a_0,a_1 < a_\emptyset$ and $2 < b_0,b_1 < b_\emptyset$. The lines $\ell_0,\ell_1$ need to be  situated strictly below $\ell_\emptyset$ on $[s,t]$. Moreover, since $0\nleq _W 1$, we need a point $r_0 
\in (s,t)$ such that $\ell_0(r_0) > \ell_1(r_0)$. Likewise, we need a point $r_1 \in (s,t)$ such that $\ell_1(r_1) > \ell_0(r_1)$. For convenience, we can make $\ell_1, \ell_0$ intersect at $r_\emptyset$ and choose some $r_0$ to the right of $r_\emptyset$ and some $r_1$ to the left of $r_\emptyset$. (See \autoref{ConstrL10}.)

Now move on to the construction of the lines $\ell_{00}$,   $ \ell_{01}$. At this point the picture starts looking more convoluted. We require the rational numbers  $a_{00},a_{01},b_{00},b_{01}, \ell_{00}, \ell_{01}$ to respect the order $\emptyset >_W 0 >_W 00, 01$. To achieve this, it is enough to ensure that $a_{00},a_{01}< a_0$ and $b_{00},b_{01}< b_0$. Then, the line $\ell_{00}$ will  lie strictly below $ \ell_{0}$. In addition, we need to make sure that $r_{1}$ witnesses that $1 \nleq_W 00, 01$ by having $\ell_1(r_1) > \ell_{00}(r_1), \ell_{01}(r_1)$. Moreover, we would like to find points $r_{00}$, $r_{01}$ that would witness $00, 01 \nleq_W 1$. This can be achieved by constructing $\ell_{00}$, $\ell_{01}$  and  $r_{00}$, $r_{01}$  as small  displacements of the lines $\ell_0$ and   $r_0$ respectively. Then the necessary conditions will  follow by continuity. (See \autoref{ConstrL0001}).  The next step, i.e. the construction of lines  $l_{10}$, $l_{11}$, is illustrated in   \autoref{ConstrL1011}.

  \begin{figure} 
  \begin{minipage}{0.95\textwidth}
\centering
 \begin{tikzpicture}[xscale = 0.8,yscale = 0.2]
  \tkzInit[xmin=-18,xmax=0,ymin=-7,ymax=17]
  \tkzClip

   \tkzDefPoints{-16.5/-1/T1, -16.5/16.5/T2}
  \tkzDrawLine[line width = 0.4mm, dashed](T1,T2)
  \tkzLabelPoint[left,dashed,font=\large](T1){$s$}

  \tkzDefPoints{-1/-1/T1, -1/16/T2}
  \tkzDrawLine[line width = 0.4mm, dashed](T1,T2)
  \tkzLabelPoint[right,dashed,font=\large](T1){$t$}

  \tkzDefPoint(5,1){A}
  \tkzDefPoint(5,18){B}
  \tkzDrawLine[add=5 and 5](A,B)

  \tkzDefPoint(24,1){C}
  \tkzDefPoint(24,18){D}
  \tkzDrawLine[add=5 and 5, thick](C,D)

  \tkzDefPoints{-16.5/14/F, -1/14/G}
  \tkzDrawLine[thick,color=red, line width = 0.5mm ](F,G)

  \tkzLabelPoint[above left,font=\large, red](F){$
  \ell_{\emptyset}\,\,\,$}

  \tkzDefPoints{-9/-1/Q1, -9/16/Q2}
  \tkzDrawLine[thick, dashed, red, line width= 0.45 mm](Q1,Q2)
  \tkzLabelPoint[right,font=\large, red](Q2){$r_{\emptyset}$}

\end{tikzpicture}\caption{At first, we construct the line  $\ell_\emptyset$. Any point $r_{\emptyset} \in (s,t)$ can be chosen as witness point.}\label{ConstrLES}
  \end{minipage}
    \end{figure}
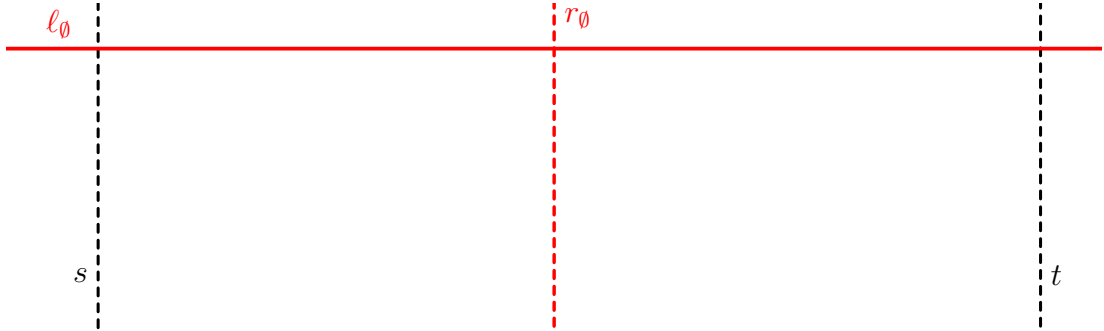

   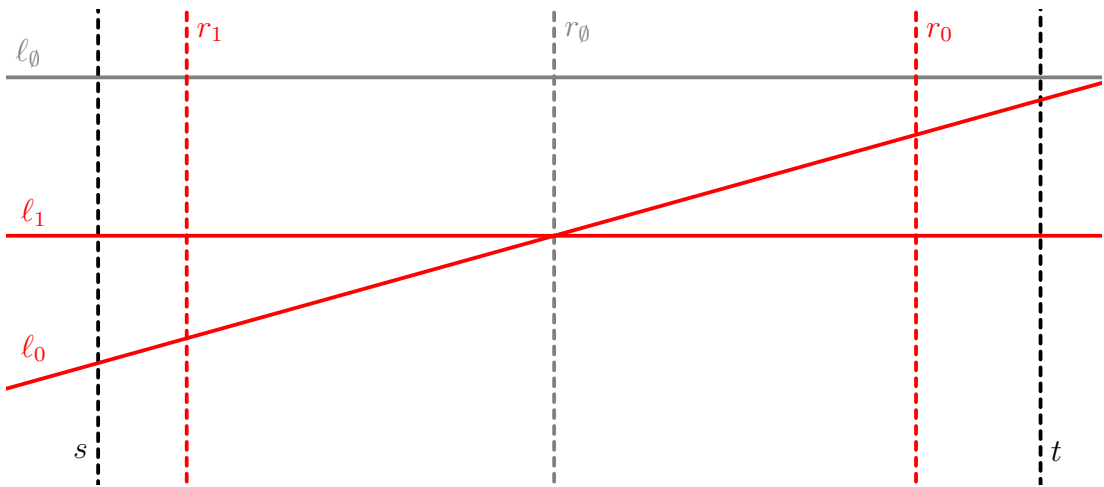
\begin{figure} 
  
\centering

\begin{tikzpicture}[xscale = 0.8,yscale = 0.3]
  \tkzInit[xmin=-18,xmax=0,ymin=-4,ymax=17]
  \tkzClip

     \tkzDefPoints{-16.5/-2.5/T1, -16.5/16.5/T2}
  \tkzDrawLine[line width = 0.5mm, dashed](T1,T2)
  \tkzLabelPoint[left,dashed,font=\large](T1){$s$}

  \tkzDefPoints{-1/-2.5/T1, -1/16/T2}
  \tkzDrawLine[line width = 0.5mm, dashed](T1,T2)
  \tkzLabelPoint[right,dashed,font=\large](T1){$t$}
  \tkzDefPoints{-17/14/F, -1/14/G}
  \tkzDrawLine[thick,color=gray, line width = 0.5mm ](F,G)

  \tkzLabelPoint[above left,font=\large, gray](F){$
  \ell_{\emptyset}\,\,\,$}

  \tkzDefPoints{-17/7/H, -1/7/I}

  \tkzDefPoints{-9/-7/Q1, -9/16/Q2}
  \tkzDrawLine[thick, dashed,gray, line width = 0.5mm](Q1,Q2)
  \tkzLabelPoint[right,font=\large, gray](Q2){$r_{\emptyset}$}

  \tkzDefPoints{-15.043/-7/P1, -15.043/16/P2}
  \tkzDrawLine[thick, dashed, red, line width = 0.5mm](P1,P2)
  \tkzLabelPoint[right,font=\large, red](P2){$r_{1}$}

  \tkzDefPoints{-3.048/-7/R1, -3.048/16/R2}
  \tkzDrawLine[thick, dashed, red, line width = 0.5mm](R1,R2) 
  \tkzLabelPoint[right,font=\large, red](R2){$r_{0}$}

  \tkzDefPoint(5,1){A}
  \tkzDefPoint(5,18){B}
  \tkzDrawLine[add=5 and 5](A,B)

  \tkzDefPoint(24,1){C}
  \tkzDefPoint(24,18){D}
  \tkzDrawLine[add=5 and 5, thick](C,D)

\tkzDrawLine[thick,color=red, line width = 0.5mm](H,I)
  \tkzLabelPoint[above left,font=\large, red](H){$
  \ell_{1}\,\,$}

  \tkzDefPoints{-17/1/J, -1/13/K}
  \tkzDrawLine[thick,color=red, line width = 0.5mm](J,K)
  \tkzLabelPoint[above left,font=\large, red](J){$
  \ell_{0}\,\,$}

\end{tikzpicture}\caption{Lines $
\ell_0, \ell_1$ are situated strictly below $\ell_\emptyset$ on $[s,t]$. For simplicity we let them intersect at $r_\emptyset$. We pick $r_1$ to the left of the intersection point $r_\emptyset$  so that $\ell_1(r_1) > \ell_0(r_1)$. Analogously, we choose $r_0$ to the right of the the intersection $r_{\emptyset}$ point so that $\ell_0(r_0) > \ell_1(r_0)$. }\label{ConstrL10}

    \end{figure}

   \begin{figure} 
   \centering

\begin{tikzpicture}[xscale = 0.8,yscale = 0.3]
  \tkzInit[xmin=-18,xmax=0,ymin=-7,ymax=17]
  \tkzClip

    \tkzDefPoints{-16.5/-5.5/T1, -16.5/16.5/T2}
  \tkzDrawLine[line width = 0.5mm, dashed](T1,T2)
  \tkzLabelPoint[ below left,dashed,font=\large](T1){$s$}

  \tkzDefPoints{-1/-5.5/T1, -1/16/T2}
  \tkzDrawLine[line width = 0.5mm, dashed](T1,T2)
  \tkzLabelPoint[below right,dashed,font=\large](T1){$t$}

  \tkzDefPoint(5,1){A}
  \tkzDefPoint(5,18){B}
  \tkzDrawLine[add=5 and 5](A,B)

  \tkzDefPoint(24,1){C}
  \tkzDefPoint(24,18){D}
  \tkzDrawLine[add=5 and 5, thick](C,D)

  \tkzDefPoints{-15.043/-7/P1, -15.043/16/P2}
  \tkzDrawLine[ dashed, gray, line width = 0.5mm](P1,P2)
  \tkzLabelPoint[right,font=\large, gray](P2){$r_{1}$}

  \tkzDefPoints{-9/-7/Q1, -9/16/Q2}
  \tkzDrawLine[dashed, gray, line width = 0.5mm](Q1,Q2)
  \tkzLabelPoint[right,font=\large, gray](Q2){$r_{\emptyset}$}

  \tkzDefPoints{-3.048/-7/R1, -3.048/16/R2}
  \tkzDrawLine[thick, dashed, gray, line width = 0.5mm](R1,R2) 
  \tkzLabelPoint[right,font=\large, gray](R2){$r_{0}$}

  \tkzDefPoints{-17/14/F, -1/14/G}
  \tkzDrawLine[thick,gray, line width = 0.5mm](F,G)

  \tkzLabelPoint[above left,font=\large, gray](F){$
  \ell_{\emptyset}\,\,\,$}

  \tkzDefPoints{-17/7/H, -1/7/I}
  \tkzDrawLine[thick,color=gray, line width = 0.5mm](H,I)
  \tkzLabelPoint[above left,font=\large, gray](H){$
  \ell_{1}\,\,$}

  \tkzDefPoints{-17/1/J, -1/13/K}
  \tkzDrawLine[thick,color=gray, line width = 0.5mm](J,K)
  \tkzLabelPoint[above left,font=\large, gray](J){$
  \ell_{0}\,\,$}

   \tkzDefPoints{-1/12/P, -17/-0/Q}
  \tkzDrawLine[thick,color=red, line width = 0.5mm](P,Q)
  \tkzDefPoints{-1/12/P, -17/-0.5/Q}
  \tkzLabelPoint[ below left,font=\large, red](Q){$
  \ell_{01}$}

  \tkzDefPoints{-0.99/12.45/R, -17/-3/S}
  \tkzDrawLine[thick,color=red, line width = 0.5mm](R,S)
  \tkzDefPoints{-0.99/12.45/R, -17/-4/S}
  \tkzLabelPoint[below left,font=\large, red](S){$
  \ell_{00}$}

    \tkzDefPoints{-1.92/-7/A1, -1.92/16/A2}
  \tkzDrawLine[thick, dashed,line width = 0.5mm, red](A1,A2)
  \tkzLabelPoint[right,font=\large,red ](A2){$r_{01}$}

  \tkzDefPoints{-4.17/-7/B1, -4.17/16/B2}
  \tkzDrawLine[thick, dashed,line width = 0.5mm, red](B1,B2)
  \tkzLabelPoint[right,font=\large, red](B2){$r_{00}$}

  \tkzDefPoint(24,20){E}
\end{tikzpicture} \caption{We construct the lines $\ell_{01}$ and $\ell_{00}$ as slight ``perturbations'' of the line $\ell_0.$}\label{ConstrL0001}

   \end{figure}
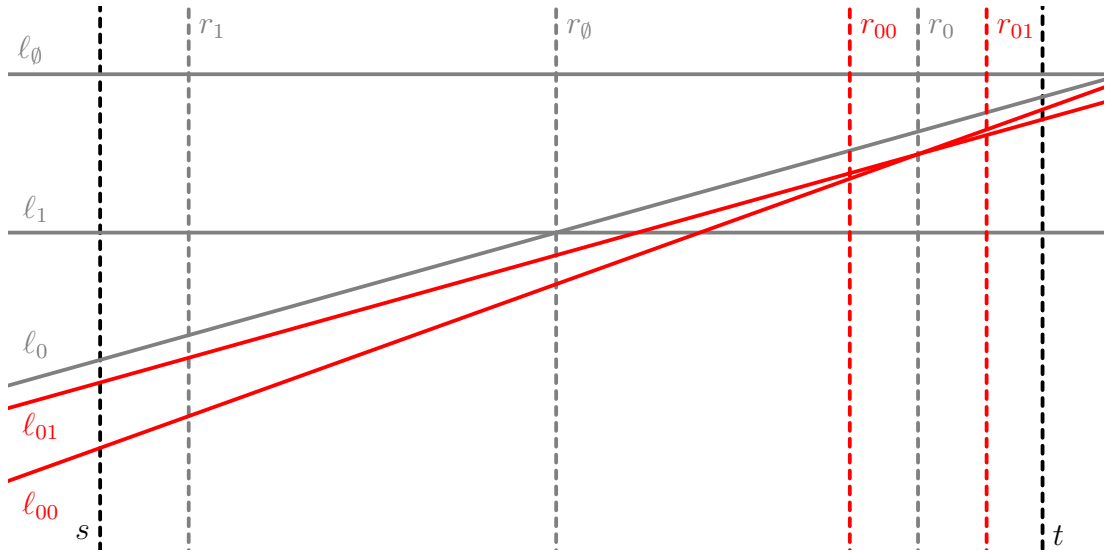

   \begin{figure}

\centering

\begin{tikzpicture}[xscale = 0.8,yscale = 0.4]
  \tkzInit[xmin=-18,xmax=0,ymin=-7,ymax=17]
  \tkzClip

 \tkzDefPoints{-16.5/-5/T1, -16.5/16.5/T2}
  \tkzDrawLine[line width = 0.4mm, dashed](T1,T2)
  \tkzLabelPoint[left,dashed,font=\large](T1){$s$}

  \tkzDefPoints{-1/-5/T1, -1/16/T2}
  \tkzDrawLine[line width = 0.4mm, dashed](T1,T2)
  \tkzLabelPoint[right,dashed,font=\large](T1){$t$}

  \tkzDefPoint(5,1){A}
  \tkzDefPoint(5,18){B}
  \tkzDrawLine[add=5 and 5](A,B)

  \tkzDefPoint(24,1){C}
  \tkzDefPoint(24,18){D}
  \tkzDrawLine[add=5 and 5, thick](C,D)

  \tkzDefPoints{-15.043/-7/P1, -15.043/16/P2}
  \tkzDrawLine[thick, dashed, line width = 0.5mm, gray](P1,P2)
  \tkzLabelPoint[right,font=\large, gray](P2){$r_{1}$}

  \tkzDefPoints{-9/-7/Q1, -9/16/Q2}
  \tkzDrawLine[thick, dashed, gray, line width =0.5mm](Q1,Q2)
  \tkzLabelPoint[right,font=\large, gray](Q2){$r_{\emptyset}$}

  \tkzDefPoints{-3.048/-7/R1, -3.048/16/R2}
  \tkzDrawLine[thick, dashed, gray, line width = 0.5mm](R1,R2) 
  \tkzLabelPoint[right,font=\large, gray, line width=0.5mm](R2){$r_{0}$}

  \tkzDefPoints{-1.92/-7/A1, -1.92/16/A2}
  \tkzDrawLine[thick, dashed, gray, line width = 0.5mm](A1,A2)
  \tkzLabelPoint[right,font=\large, gray](A2){$r_{01}$}

  \tkzDefPoints{-4.17/-7/B1, -4.17/16/B2}
  \tkzDrawLine[thick, dashed, gray, line width = 0.5mm](B1,B2)
  \tkzLabelPoint[right,font=\large, gray](B2){$r_{00}$}

  \tkzDefPoints{-17/14/F, -1/14/G}
  \tkzDrawLine[thick,color=gray, line width = 0.5mm ](F,G)

  \tkzLabelPoint[above left,font=\large, gray](F){$
  \ell_{\emptyset}\,\,\,$}

  \tkzDefPoints{-17/7/H, -1/7/I}
  \tkzDrawLine[thick,color=gray, line width = 0.5mm](H,I)
  \tkzLabelPoint[above left,font=\large, gray](H){$
  \ell_{1}\,\,$}

  \tkzDefPoints{-17/1/J, -1/13/K}
  \tkzDrawLine[thick,color=gray, line width = 0.5mm](J,K)
  \tkzLabelPoint[above left,font=\large, gray](J){$
  \ell_{0}\,\,$}

  \tkzDefPoints{-1/12/P, -17/-1/Q}
  \tkzDrawLine[thick,color=gray, line width = 0.5mm](P,Q)
  \tkzLabelPoint[above left,font=\large, gray](Q){$
  \ell_{01}$}

  \tkzDefPoints{-0.99/12.45/R, -17/-3/S}
  \tkzDrawLine[thick,color=gray, line width = 0.5mm](R,S)
  \tkzLabelPoint[above left,font=\large, gray](S){$
  \ell_{00}$}

  \tkzDefPoints{-17/4/L, 0/4/M}
  \tkzDrawLine[thick,color=red, line width = 0.5mm](L,M)

  \tkzDefPoints{-17/4/L, -1/3.5/M}
  \tkzLabelPoint[below right,font=\large, color = red](M){$\ell_{11}$}

  \tkzDefPoint(-1.029,6.48){N}
  \tkzDefPoint(-16.941,3.66){O}
  \tkzDrawLine[thick,color=red, line width = 0.5mm](N,O)
  \tkzLabelPoint[below right,font=\large, red](N){$
  \ell_{10}$}

    \tkzDefPoints{-14/-7/S1, -14/16/S2}
  \tkzDrawLine[thick, dashed, red, line width = 0.5mm](S1,S2)
  \tkzLabelPoint[right,font=\large,color = red](S2){$r_{10}$}

  \tkzDefPoints{-15.9/-7/T1, -15.9/16/T2}
  \tkzDrawLine[thick, dashed, red, line width = 0.5mm](T1,T2)
  \tkzLabelPoint[right,font=\large, color = red](T2){$r_{11}$}

  \tkzDefPoint(24,20){E}
\end{tikzpicture}

 \caption{We construct lines $\ell_{10}$ and $\ell_{11}$ as slight ``perturbations'' of the line $\ell_1.$}\label{ConstrL1011}

   \end{figure}
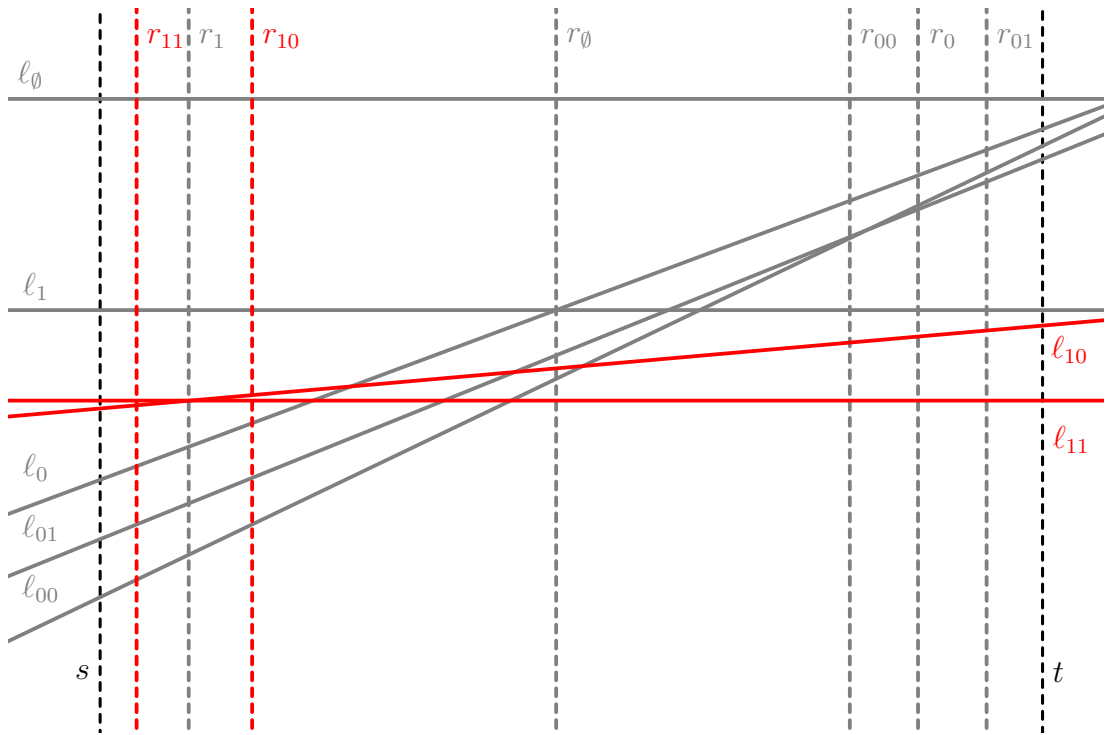

When we follow this approach, the structure of $(W, \leq_W)$ and $(W, \leq_\mathcal{L})$ plays to our advantage. Suppose that a word $w \in W$ is followed directly by the word $u0$ with respect to the order $\mathcal{L}$. The set of words $v \leq_\mathcal{L}w$,  such that $$v \ngeq_W u0 
\,\,\,\,\text{ or }\,\,\,\, v \ngeq_W u1,$$ is precisely the set of words  $v \leq_{\mathcal{L}}w$
 satisfying $$v\ngeq_W u.$$ So, by choosing $a_{u0},a_{u1}, b_{u0}, b_{u1}, r_{u0},r_{u1}$ sufficiently close to their counterparts corresponding to $u$, by continuity the newly introduced parameters will automatically  satisfy most of the required incomparability constraints.

   \vskip2ex
Let us now present a rigorous proof of \autoref{aux}.
   \begin{proof} [Proof of \autoref{aux}.]
   Recall that  $\mathcal{L}$ denotes the (total) shortlex order on $W$.
   As mentioned before, we will construct the parameters $(a_w,b_w,\ell_w)$ recursively and in pairs following the (total) order $
   \mathcal{L}$: first $\emptyset$, then $0,1$, followed by $00$, $01$, and then $10$, $11$, etc.  We will show that the following property $P_u$ holds for all words $u \in W$. %

\begin{framed}
\noindent\textbf{Property} $P_u$: 
We can assign to each word $w \leq_{\mathcal{L}}u$ a pair of rational numbers $(a_w,b_w) \in \mathbb{Q}_{>2}^2$, a corresponding linear map $\ell_w: x \to a_wx + b_w$ and a ``witness value'' $r_w \in (s,t)$ such that the following conditions (which are in correspondence with those of \autoref{aux}) are satisfied:
\begin{enumerate}[\upshape(i)]
   \item For every $v,w \leq_{\mathcal{L}} u$, if $v \leq_W w$ and $v \neq w$, then $a_v <a_w$ and $b_v <b_w$, and thus in particular $\ell_v < \ell_w$ on $(s,t)$. 
   \item For every $v,w \leq_{\mathcal{L}} u$, if $v \nleq_W w$, then $\ell_{v}(r_v) > \ell_w(r_v)$, and in particular $\ell_v \nleq \ell_w$ on $(s,t)$.
\end{enumerate}\vspace{-0.5em}\end{framed}
   
   Start the induction  with the empty string $\emptyset$. We aim to fix rational numbers $a_{\emptyset}, b_{\emptyset}, r_{\emptyset}$ and a line $\ell_\emptyset$. There are no constraints to be satisfied except for the condition $a_\emptyset, b _\emptyset >2$ and $r_\emptyset \in (s,t)$. We can choose $a_{\emptyset}, b_{\emptyset}, r_{\emptyset}$ within those conditions arbitrarily. 
    
Now, suppose we have shown the property for all words up to (with respect to the order $\mathcal{L}$) some $u'$, ending with  $1$. The next word (with respect to the order $\mathcal{L}$) is  of the form $u0$ for some $u\leq_\mathcal{L} {u'}$. By induction hypothesis, there exists an assignment of parameters $(a_w,b_w, \ell_w,r_w)$ to all words $w \leq_{\mathcal{L}} u'$ satisfying properties (i) and (ii) of $P_{u'}$. We will extend this assignment  to all words $w \leq_{\mathcal{L}} u1$ by defining both $a_{u0},b_{u0},r_{u0}$ and $a_{u1},b_{u1},r_{u1}$ so as to keep properties (i) and (ii) satisfied. 
We will choose $a_{u0},b_{u0},r_{u0}$ and $a_{u1},b_{u1},r_{u1}$ to lie very close to $a_u,b_u,r_u$, so that, overall, the lines $\ell_{u0},\ell_{u1}$ behave very similarly to $\ell_u$. 

Let $\mu  \in (0,1)$ be such that $1/\mu = r_u$ (this is possible since $r_u >s \geq 1)$ and for any~$\eps >0$, set \[ a_{u0}(\eps) = a_u - (1-\mu)\eps,\,\,\,\,\,\,\,\,\,\,\,\,\,\,\, b_{u0}(\eps) = b_u - 2\eps, \] \[a_{u1}(\eps) = a_u - \eps, \,\,\,\,\,\,\,\,\,\,\,\,\,\,\,b_{u1}(\eps) =b_u - \eps. \]  
Important features of this construction are the inequalities $a_{u0}(\eps),a_{u1}(\eps) < a_u(\eps)$ and $b_{u0}(\eps),b_{u1}(\eps) < b_u(\eps)$. Furthermore, $a_{u0}(\eps) > a_{u1}(\eps) $ but $b_{u0}(\eps) < b_{u1}(\eps)$.  It is easy to compute that the lines $x \mapsto a_{u0}(\eps) x + b_{u0}(\eps) $ and $x \mapsto a_{u1}(\eps) x + b_{u1}(\eps) $ intersect at $$-\frac{b_{u0}(\eps) -b_{u1}(\eps)}{a_{u0}(\eps) -a_{u1}(\eps) } = \frac{1}\mu = r_u.$$ 
(The reader can observe  the relative position of the lines in   \autoref{positionOfLu0Lu1}.)
\begin{figure}[h!]
 \label{PostionLu1Lu0}
 \begin{center}
\begin{tikzpicture}[xscale=2.5, yscale=1.3 ]
 \clip (0.5,-0) rectangle (4.5,4);
  \draw[dashed,line width =0.5mm] (1,-1) -- (1,3) node[above] {\( s \)};
    \draw[dashed,line width =0.5mm] (3,-1) -- (3,3) node[above] {\( t \)};
    \draw[dashed, Azure4,line width =0.5mm] (1.5,-1) -- (1.5,3) node[above] {\( 1/\mu \)};
    
    \draw[ gray, line width =0.5mm] (-0.5,1.5) -- (3.5,2.5) node[above] {$\ell_u$};
    \draw[ red, line width =0.5mm] (-0.5,0.5) -- (3.5,2) node[above] {$\ell_{u0}$};
    \draw[red, line width =0.5mm] (-0.5,1) -- (3.5,1.5) node[above] {$\ell_{u1}$};
    \draw[<->, violet] (4,1) -- (4,3) node[midway,right=5pt] { small};

\end{tikzpicture}
\caption{The lines $\ell_{u0}$, $\ell_{u1}$ are situated strictly below, but close to~$\ell_u$. 
}
\label{positionOfLu0Lu1}

\end{center}
\end{figure}
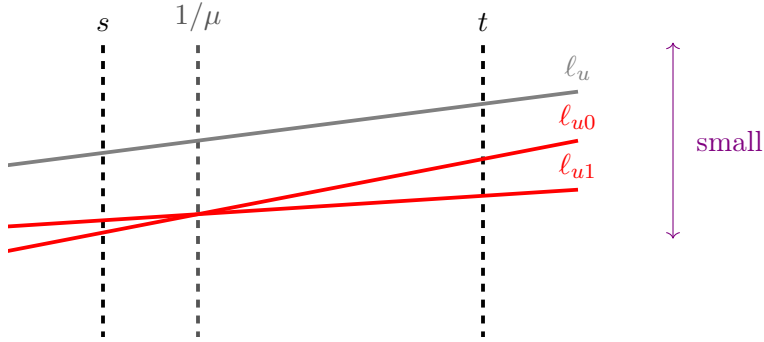

We claim that by setting $a_{u0}=a_{u0}(\eps)$, $a_{u1}=a_{u1}(\eps)$, $b_{u0}=b_{u0}(\eps)$, $b_{u1}=b_{u1}(\eps)$ for $\eps >0 $ small enough and choosing appropriately $r_{u0}$, $r_{u1}$, the parameters will satisfy the following conditions:
    \begin{enumerate}%
    \item[(a)] $2<a_{u0},a_{u1} <a_{u}$ and $2<b_{u0}, b_{u1}< b_u$. This is enough to ensure that property~(i) of $P_{u1}$ holds.

\item[(b1)] For any $v \leq_{\mathcal{L}}  u'$, if $v \nleq_W u$, line $\ell_v$ lies above the newly introduced lines $\ell_{u0}$, $\ell_{u1}$ at the witness value $r_v$. That is, $\ell_v(r_v) > \ell_{u0}(r_v), \ell_{u1}(r_v)$.
    
    \item[(b2)] For any $v \leq_{\mathcal{L}}  u'$, if $v \ngeq_W u$, then  the newly introduced lines $\ell_{u0},\ell_{u1}$ lie above the line $\ell_v$ at the new witness values $r_{u0}$, $r_{u1}$. That is, $\ell_{u0}(r_{u0})> \ell_v(r_{u0})$ and $\ell_{u1}(r_{u1})> \ell_v(r_{u1})$. 
    \item[(b3)] Moreover, $\ell_{u0}(r_{u0})> \ell_{u1}(r_{u0})$ and $\ell_{u1}(r_{u1})> \ell_{u0}(r_{u1})$.
     \end{enumerate}
     \noindent If (b1), (b2) and (b3) hold, then the  parameter assignment satisfies  property~(ii) of~$P_{u1}$.

\noindent As noted above, it is already true by construction that $a_{u0},a_{u1}<a_u$ and $b_{u0},b_{u1}<b_u$.

\noindent For condition (b1), note that for any $v \leq_{\mathcal{L}} u'$, if $v\nleq_W u$, then 
\[
a_{u0}(\eps) r_{v} + b_{u0}(\eps)\xrightarrow[\eps \to 0]{} \ell_u(r_v) < \ell_{v}(r_v).
\]
\noindent For condition (b2), observe that for $v \leq_{\mathcal{L}} u'$, if $v\ngeq_W u$, then
\[ a_{u0}(\eps) r_{u} + b_{u0}(\eps) \xrightarrow[\eps \to 0]{} \ell_u(r_u) >\ell_{v}(r_u).\] 
\noindent The last inequality is valid by  hypothesis. 
Analogous properties hold for $a_{u1}(\eps)$, $ b_{u1}(\eps)$.  We claim that there is $\eps>0$ such that for $v \leq_{\mathcal{L}} u'$  and $v \nleq_W u$, \[\ell_{u0}(r_v), \ell_{u1}(r_v) < \ell_{v}(r_v), \] and whenever $v \ngeq_W u$  \[ \ell_{u0}(r_{u}), \ell_{u1}(r_u)>\ell_{v}(r_u),\]
while $a_{u0},a_{u1},b_{u0},b_{u1}>2$. Indeed, this is a finite set of constraints, particularly because $\{v \in W \mid v \leq_{\mathcal{L}} u'\}$ is finite, and we have observed that all of these constraints are true in the limit $\eps \rightarrow 0$, hence they are simultaneously satisfied for small enough~$\eps>0$.

\begin{figure}[h] 
\begin{tikzpicture}[line cap=round,x=1.5cm,y=1.1cm, xscale=0.6, yscale = 0.8]

\clip(-9,-8)rectangle (6.5,3.5);
\draw [line width=0.5mm,dashed] (-8,-17)-- (-8,3);
\draw [line width=0.5mm] (-8,1)-- (-8,1) node[right,font=\large ] {\( s\)};

\draw [line width=0.5mm, dashed] (5,-17)-- (5,3.5) (5,-6)node[left,font=\large ] {\( t \)};

\draw [line width=0.5mm, Azure2] (-10.10,-5.92)-- (9.25,-4.97);

\draw [line width=0.5mm, Azure2] (-10.09,-6.75)-- (9.24,-3.39);

\draw [line width=0.5mm, Azure2] (-10.04,-14.39)-- (9.21,7.13);

\draw [line width=0.5mm, Azure2] (-10.03,-15.94)-- (9.21,8.46);

\draw [line width=0.5mm,dashed, gray] (-2.9,-17)-- (-2.9,3);
\draw [line width=0.5mm,dashed, gray] (-2.9,1) -- (-2.9,1) node[right,font=\large ] {\( r_{u0} \)};

\draw [line width=0.5mm,dashed, gray] (-1.6,-17)-- (-1.6,3);
\draw [line width=0.5mm,dashed, gray] (-1.6,1)-- (-1.6,1) node[right,font=\large ] {\( r_{u} \)};

\draw [line width=0.5mm,dashed, gray] (-0.3,-17)-- (-0.3,3);
\draw [line width=0.5mm,dashed, gray] (-0.3,1)-- (-0.3,1)
node[right, gray,font=\large ] {\( r_{u1}\)};

\draw [line width=0.5mm, red] (-10.09,-7.77)-- (9.22,6.02);

\draw [line width=0.5mm, red] 
(-5,-3.77)-- (-5,-3.77) node[left,font=\large] {\( \ell_u \)};

\draw [line width=0.5mm ,red] (-10.08,-9.78)-- (9.22,4.13);
\draw [line width=0.5mm ,red] (6,0)-- (6,0)node[left,font=\large] {\( \ell_{u0}\)};

\draw [line width = 0.5mm, red] (-10.08,-8.82)-- (9.22,2.90);
\draw [line width = 0.5mm, red] (6,2.2)-- (6,2.2) node[left,font=\large] {\( \ell_{u1}\)};

\end{tikzpicture}
\caption{The lines $\ell_{u0}$, 
 $\ell_{u1}$ are situated just beneath $\ell_u$. At $r_u$ they are above the lines $\ell_{u'}$ for $w' \ngeq_W w$ (in gray). We pick $r_{u0}$ and $r_{u1}$ in the vicinity of $r_u$, so that $\ell_{u0},\ell_{u1}$ are above the gray lines at $r_{u0}$ and $r_{u1}$ respectively. We pick $r_{u0}$ below the intersection point of $\ell_{u0}$ and $\ell_{u1}$ because we want $\ell_{u0}$  to lie above $\ell_{u1}$ at $r_{u0}$. Meanwhile, we pick $r_{u1}$ on the right of this intersection.}\label{summaryPic}
\end{figure}
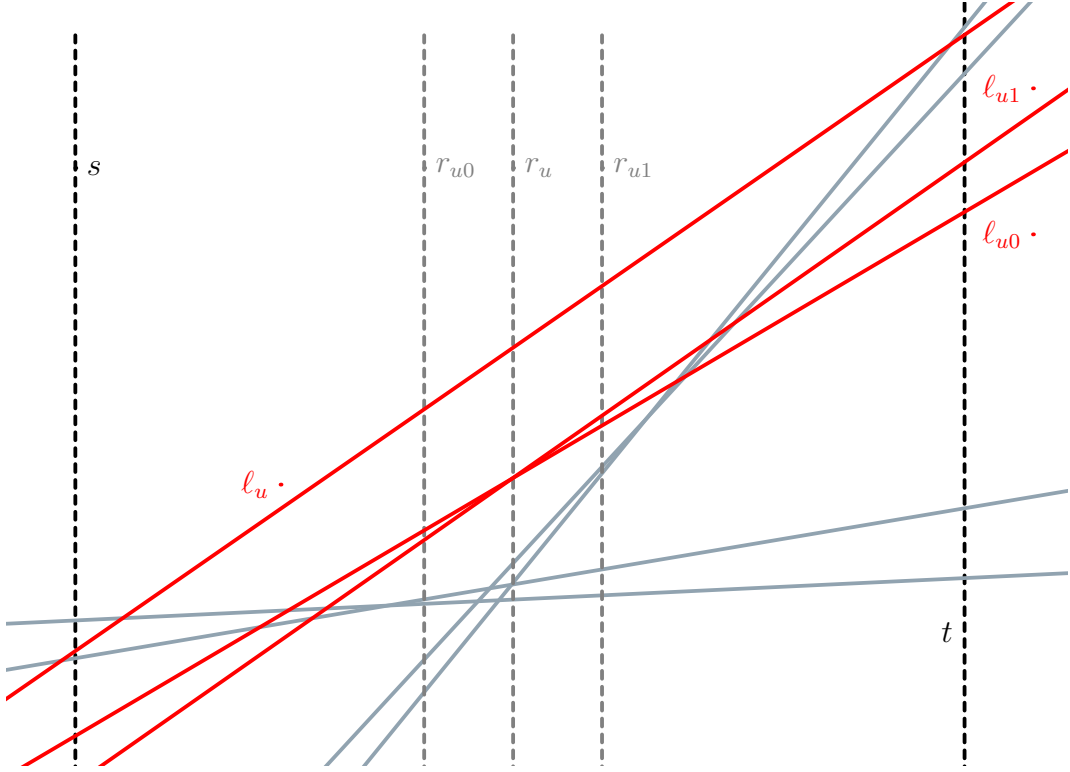
 By continuity, choosing $r_{u1} < 1/\mu = r_u < r_{u0}$ close enough to $r_u$ and within $(s,t)$ (see \autoref{summaryPic}), we can guarantee that
 \[\ell_{u0}(r_{u0})>\ell_v(r_{u0})\,\, \text{ and } \,\,\ell_{u1}(r_{u1})>\ell_v(r_{u1}).\]

Finally,  since $b_{u1}>b_{u0}$, the line $\ell_{u0}$ lies above the line $\ell_{u1}$ for values less than $1/\mu$, and conversely $\ell_{u0}$  lies below $\ell_{u1}$ for values greater than $\frac{1}{\mu}$. If $r_{u1} < 1/\mu < r_{u0}$, we have
\[
\ell_{u1}(r_{u1})> \ell_{u0}(r_{u1}),\,\, \text{ and } \,\, \ell_{u0}(r_{u0})>\ell_{u_1}(r_{u0}).
\]
Thus, our choice of parameters also satisfies condition (b3). It follows by induction that the property $P_u$ holds for all $u$. %
\end{proof}

\section{Discussion and open problems} \label{sec:dis}

We briefly discuss how \autoref{th:main} can be generalised to preorderd semirings with certain conditions (Strassen semirings), and then discuss some open problems.

\subsubsection*{Universality for Strassen semirings} 
Graphs with cohomomorphism are an instance of a so-called Strassen semiring. \autoref{th:main} generalizes under certain conditions that we will discuss here. 
 A semiring $(\mathcal{R},  0, 1, +, \cdot)$ with preorder $\leq_{\mathcal{R}}$ is called a Strassen semiring \cite{strassen1988asymptotic, wigderson2022asymptotic} if the following conditions are satisfied:
 \begin{enumerate}[\upshape(1)]
 \item The natural numbers in $\mathcal{R}$ (given by $0_\mathcal{R}$, $1_\mathcal{R}$, $ 2_\mathcal{R}= 1_\mathcal{R} +1_\mathcal{R}$, $3_\mathcal{R}=1_\mathcal{R} + 1_\mathcal{R} + 1_\mathcal{R}$, etc.) respect the natural ordering: for all $m,n \in \mathbb{N}$, $m \leq n$ if and only if $m \leq_{\mathcal{R}} n$.
 \item For every $a,b,c \in \mathcal{R}$, if $a \leq_{\mathcal{R}} b$, then $a +c \leq_{\mathcal{R}} b + c$ and $ac \leq_{\mathcal{R}} bc$.
 \item For all nonzero $a  \in \mathcal{R}$,  there exists an $n \in \mathbb{N}$ such that $1 \leq_{\mathcal{R}} a \leq_{\mathcal{R}} n.$
  \end{enumerate}
 We write
 $a  \lesssim_{\mathcal{R}} b$ for  $a,b\in \mathcal{R}$ if and only %
 $a^{n} \leq_{R} 2^{o(n)} b^{n}$. (In many applications, this condition is equivalent to  $a^{n} \leq_{\mathcal{R}}  b^{n + o(n)}$.) The asymptotic spectrum $\Delta(\mathcal{R})$ is the set of $\leq_{\mathcal{R}}$-monotone semiring homomorphisms %
 $f: \mathcal{R} \to \mathbb{R}_{\geq 0}$ 
 (just as in \autoref{sec:asd}). 
 Asymptotic spectrum duality \cite{strassen1988asymptotic} says that for any elements  $x, y  \in \mathcal{R}$, we have  $x\lesssim_\mathcal{R}  y$ if and only if for every $f \in \Delta(\mathcal{R})$, $f(x) \leq f(y)$.  %
 We need the following two conditions:
 \begin{enumerate}[(a)]
 \item There exists a binary operation $\star_{\mathcal{R}}$ such that for all spectral points $f$  of $\Delta(\mathcal{R})$ and $a,b \in \mathcal{R}$, $f(a \star_{\mathcal{R}}b) = \max(f(a),f(b))$. 
 \item There exists a family of elements $\{c_q\}_{q \in \mathbb{Q}_{\geq 2}}\subseteq \mathcal{R}$, a point $b \in \mathcal{R}$ and an  interval $[s,t]$ with $t>s>1$  such that for all $\lambda \in [s,t]$ there exists  a spectral point $f \in \Delta(\mathcal{R})$  such that  for all $q \in \mathbb{Q}$ and $\lambda \in [s,t]$ we have $f(c_q) =q$ and $f(b) = \lambda.$ Additionally, for all $q,r \in \mathbb{Q}_{\geq 2}$, if $q \leq r$, then $c_q \leq_{R} c_r.$
 \end{enumerate}
 \begin{theorem}\label{thm:StarRing}
    Let  $(\mathcal{R},  \leq_{\mathcal{R}})$ be a semiring equipped with a Strassen preorder %
    that satisfies the above properties {\upshape(a)} and {\upshape(b)}, then $(\mathcal{R},  \leq_{\mathcal{R}})$ and $(\mathcal{R}, \lesssim_\mathcal{R})$  are countably universal.
\end{theorem}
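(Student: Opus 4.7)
The plan is to mirror the proof of \autoref{th:main} step by step, replacing each graph-theoretic building block with its Strassen semiring analog supplied by hypotheses (a) and (b). The combinatorial line construction of \autoref{aux} is independent of the semiring and can be reused verbatim: for each $w \in W$ it gives parameters $(a_w, b_w) \in \mathbb{Q}_{>2}^2$, the linear map $\ell_w(x) = a_w x + b_w$, and a witness value $r_w \in (s,t)$, where $[s,t]$ is the interval supplied by (b).

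Next, I would embed $W$ into $\mathcal{R}$ by setting
$$R_w \;:=\; c_{a_w}\cdot b \,+\, c_{b_w},$$
using the family $\{c_q\}_{q \in \mathbb{Q}_{\geq 2}}$ and the distinguished element $b$ from (b) together with the semiring operations of $\mathcal{R}$. The element $R_w$ plays the role that $G_w = E_{a_w} \boxtimes G \sqcup E_{b_w}$ played in \autoref{existenceOfNiceGa}. If $v \leq_W w$, then $a_v \leq a_w$ and $b_v \leq b_w$ by \autoref{aux}, so monotonicity of the semiring operations (Strassen axiom~(2)) combined with $c_p \leq_{\mathcal{R}} c_q$ for $p \leq q$ from (b) yields $R_v \leq_{\mathcal{R}} R_w$. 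For the strict direction, the witness spectral point $\phi_v$ from (b) satisfies $\phi_v(b) = r_v$ and $\phi_v(c_q) = q$ for all $q \in \mathbb{Q}_{\geq 2}$; since $\phi_v$ is a semiring homomorphism, $\phi_v(R_w) = a_w r_v + b_w = \ell_w(r_v)$. By \autoref{aux}, $\ell_v(r_v) > \ell_w(r_v)$ whenever $v \nleq_W w$, so $\phi_v(R_v) > \phi_v(R_w)$, and asymptotic spectrum duality forces $R_v \not\lesssim_{\mathcal{R}} R_w$, and a fortiori $R_v \not\leq_{\mathcal{R}} R_w$.

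Finally, I would embed $\mathcal{W}$ into $\mathcal{R}$ by sending $A \in \mathcal{W}$ to $R_A$, defined as the iterated $\star_{\mathcal{R}}$-combination of the elements $R_a$ for $a \in A$. By hypothesis (a), $\phi(R_A) = \max_{a \in A} \phi(R_a)$ for every spectral point $\phi$, and from here the verification is a line-by-line copy of the final step of the proof of \autoref{th:main}: if $A \leq_{\mathcal{W}} B$, then every spectral point assigns $R_A$ at most the value it assigns $R_B$, so $R_A \lesssim_{\mathcal{R}} R_B$ by duality; and if $A \nleq_{\mathcal{W}} B$ with witnessing $a \in A$, the spectral point $\phi_a$ strictly separates $R_A$ from $R_B$ via $\phi_a(R_A) \geq \phi_a(R_a) > \max_{b \in B}\phi_a(R_b) = \phi_a(R_B)$.

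The main subtlety will be obtaining universality for the non-asymptotic preorder $(\mathcal{R}, \leq_{\mathcal{R}})$ in addition to $(\mathcal{R}, \lesssim_{\mathcal{R}})$. The negative direction $R_A \not\leq_{\mathcal{R}} R_B$ is automatic from the implication $\leq_{\mathcal{R}} \Rightarrow \lesssim_{\mathcal{R}}$, but the positive direction $R_A \leq_{\mathcal{R}} R_B$ when $A \leq_{\mathcal{W}} B$ is not implied by (a) alone, since (a) only constrains the behaviour of $\star_{\mathcal{R}}$ under spectral points. One should either additionally assume, or verify in each concrete example, that $\star_{\mathcal{R}}$ is monotone in $\leq_{\mathcal{R}}$ and dominates each of its operands (both of which hold automatically for graph join); with that mild strengthening, the positive direction follows from a straightforward induction on $|A|$, and the theorem is established in both preorders.
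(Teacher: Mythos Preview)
Your proposal is correct and follows essentially the same construction as the paper: set $r_w := c_{a_w}\cdot b + c_{b_w}$ using the line parameters from \autoref{aux}, set $r_A := \bigstar_{a\in A} r_a$, and then rerun the arguments of \autoref{existenceOfNiceGa} and \autoref{th:main}.

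One point worth noting: for the forward direction $A \leq_{\mathcal{W}} B \Rightarrow R_A \lesssim_{\mathcal{R}} R_B$, you argue purely via duality (every spectral point satisfies $\phi(R_A)=\max_{a\in A}\phi(R_a)\leq \max_{b\in B}\phi(R_b)=\phi(R_B)$), whereas the paper's graph-theoretic proof of \autoref{th:main} first establishes the stronger non-asymptotic $G_A \leq_{\mathcal{G}} G_B$ using concrete properties of graph join. Your route is the natural one in the abstract setting, since hypothesis~(a) only gives spectral information about $\star_{\mathcal{R}}$. Your final paragraph is therefore exactly on target: the paper's proof sketch does not explicitly address why the \emph{non-asymptotic} forward direction $R_A \leq_{\mathcal{R}} R_B$ should hold, and indeed hypothesis~(a) alone does not obviously supply it. Your suggested fix (require $\star_{\mathcal{R}}$ to be $\leq_{\mathcal{R}}$-monotone and to dominate each argument) is a reasonable strengthening and matches what the graph join actually satisfies.
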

\begin{proof}[Proof sketch]
Suppose $(\mathcal{R},  \leq_{\mathcal{R}})$, $\{c_q\}_{q \in \mathbb{Q}}$, $[s,t]$ and $ b$ are as in the hypothesis of \autoref{thm:StarRing}. Let $\{(a_w,b_w,\ell_w) \}_{w \in W}$ be as in \autoref{aux} applied to $1<s<t$. Assign to each $w \in W$ the element $r_w := c_{a_w}b +c_{b_w} \in \mathcal{R}$ and to each $A \in \mathcal{W}$ the element $r_A: = \bigstar_{a \in A}\,r_a$. The proofs of \autoref{existenceOfNiceGa} and \autoref{th:main} can be adapted to this setting and yield an order-embedding from $(\mathcal{W}, \leq_{\mathcal{W}})$ to $(\mathcal{R},  \lesssim_\mathcal{R})$.
\end{proof}

\subsubsection*{Open problems}
We discuss several natural open problems and directions in the context of our universality results and the above general version.
\begin{itemize}
\item \textbf{Further instances, necessity of conditions.} What other Strassen semirings $(\mathcal{R},  \leq_{\mathcal{R}})$ satisfy properties (a) and (b)? (Then by \autoref{thm:StarRing} the preorder $\leq_{\mathcal{R}}$ and the asymptotic version $\lesssim_\mathcal{R}$ of $\leq_{\mathcal{R}}$ will be countably universal.)
It would be of interest to determine whether conditions (a) and (b) are necessary for a Strassen preorder to be universal, or if there is a weaker characterisation of such preorders.

\item \textbf{Universality of (asymptotic) tensor restriction.}
It would be particularly meaningful to know if properties (a) and (b) apply to the setting of tensors, the area in which the theory of asymptotic spectra initially emerged (motivated by the study of matrix multiplication algorithms and further connected to the study of quantum entanglement and several combinatorial problems like the cap set problem).
Here $\mathcal{R}$ is 
the semiring of tensors $T \in \mathbb{F}^{n_1} \otimes\mathbb{F}^{n_2} \otimes \mathbb{F}^{n_3}$ (for arbitrary~$n_i$) with sum given by the direct sum $\oplus$, product by the Kronecker product~$\boxtimes$, natural numbers given by identity tensors $\sum_{i=1}^r e_i \otimes e_i \otimes e_i$, and restriction order %
 $\leq_\mathcal{R}$ defined by $S \leq T $ if $S$ can be obtained from $T$ by applying linear maps to three factors.
Does the preorder $(\mathcal{R}, \leq_{\mathcal{R}})$ satisfy the conditions $(a)$ and $(b)$? Is this preorder countably universal?

\item \textbf{Polynomials with pointwise ordering and Kneser graphs.}
We have seen in \autoref{sec:sbr} how to use %
polynomials with coefficients in $\{0,1\} \cup \QQ_{\geq2}$, evaluated on a (proper) interval $[s,t]$, to order-embed
any finite preorder  %
into  $(\mathcal{G},\asympleq)$. Denote the set of such polynomials by $\mathcal{P}$ and for $p,q \in \mathcal{P}$, write $p \leq_{\mathcal{P}}q$ if and only if $p(x) \leq q(x)$  for all $x \in [s,t]$.  The strategy in \autoref{sec:sbr} worked because the preorder $(\mathcal{P}, \leq_{\mathcal{P}})$ is universal with respect to finite preorders. Based on this, we may imagine another way of proving \autoref{th:main}, namely (1) to prove $(\mathcal{P}, \leq_{\mathcal{P}})$ is universal for countable preorders; and then (2) to prove every $\phi \in \Delta(\mathcal{G})$ is ``Kneser-normalised'': 
 $\phi(\overline{K_{p:q}}) = p/q$ for every $p,q \in \mathbb{N}$ (where $K_{p:q}$ denotes the Kneser graph with vertices given by $q$-subsets of $[p]$ with adjacency given by disjointness). If all spectral points are Kneser-normalised, then  $\sum_i \frac{p_i}{q_i} x^i \to \sum_{i} \overline{K_{p_i:q_i}} G^{\boxtimes i}$ defines an order-embedding from $(P, \leq_P)$ to $(\mathcal{G}, \asympleq)$, and the image of this order-embedding is universal with respect to countable preorders.
     We leave the above (1) and (2) as open problems.
     Regarding problem (1), a weaker version of this statement is claimed in \cite[Theorem 2.4]{HubickaNestrilUniversal}, namely for polynomials with coefficients in~$\QQ$. %
     Regarding problem (2), known to be Kneser-normalised are: fractional clique covering number $\overline{\chi_f}$ \cite{Scheinerman1997FractionalGT}, Lovász theta function~$\theta$ \cite{Lovasz79} and complement of the projective rank $\overline{\xi_f}$ \cite{wocjan2018spectral}. 

\item \textbf{Fractal property.}
   Fiala, Hubička,  Long, and Nešetřil \cite{FractalProp}  prove that the cohomomorphism order %
   satisfies an even stronger property. For every two graphs $G_1$, $G_2$, if $G_1 \leq G_2$ and $G_2 \nleq G_1$, then there exists an order-embedding of $(\mathcal{G},\homomoleq) $ into  $(\{ H \in \mathcal{G} \mid G_1 \homomoleq H \homomoleq G_2\}, {\homomoleq})$, and thus the latter is universal. A preorder satisfying this property is called \emph{fractal}.
It is natural to ask 
whether $(\mathcal{G},\asympleq)$  is fractal. Our construction does not settle this.

\item \textbf{Computational complexity.}
From a computational complexity point of view, it is a central open problem to determine whether the Shannon capacity $\Theta(G)$ (and, closely related, asymptotic cohomomorphism $G \asympleq H$) is computable (see, e.g.,~\cite{MR2234473}). While \autoref{th:main} does not touch this problem, we expect that several of the ingredients we developed will be useful for the study of this and related complexity-theoretic questions.

\end{itemize}

\section*{Acknowledgements}
JZ and AL acknowledge financial support by NWO Veni grant VI.Veni.212.284 and NWO grant OCENW.M.21.316.

\bibliographystyle{alphaurl}
\bibliography{main}{}

\end{document}